\documentclass[11pt,reqno]{amsart}

\usepackage[a4paper,left=35mm,right=35mm,top=30mm,bottom=30mm,marginpar=25mm]{geometry}

\usepackage{amsmath}
\usepackage{amssymb}
\usepackage{amsthm}
\usepackage[utf8]{inputenc}
\usepackage{eurosym}
\usepackage{graphicx}
\usepackage{hyperref}
\usepackage{dsfont}
\usepackage[nocompress, space]{cite}
\usepackage{xcolor}

\usepackage[mathlines]{lineno}

\allowdisplaybreaks

\usepackage{ifthen}

\usepackage{algorithm}
\usepackage{algorithmic}
\usepackage{booktabs}
\usepackage{enumitem}
\usepackage[draft]{changes}
\definechangesauthor[name={Diogo}, color=blue]{DG}
\definechangesauthor[name={Reverse}, color=red]{R}

\makeindex

\newcommand{\Rr}{{\mathbb{R}}}

\newcommand{\Zz}{{\mathbb{Z}}}

\newcommand{\Tt}{{\mathbb{T}}}

\newcommand{\Dd}{{\mathcal{D}}}

\newcommand{\epsi}{\varepsilon}

\def\geq{\geqslant}

\numberwithin{equation}{section}

\newtheoremstyle{thmlemcorr}{10pt}{10pt}{\itshape}{}{\bfseries}{.}{10pt}{{\thmname{#1}\thmnumber{
#2}\thmnote{ (#3)}}}
\newtheoremstyle{thmlemcorr*}{10pt}{10pt}{\itshape}{}{\bfseries}{.}\newline{{\thmname{#1}\thmnumber{
#2}\thmnote{ (#3)}}}
\newtheoremstyle{defi}{10pt}{10pt}{\itshape}{}{\bfseries}{.}{10pt}{{\thmname{#1}\thmnumber{
#2}\thmnote{ (#3)}}}
\newtheoremstyle{remexample}{10pt}{10pt}{}{}{\bfseries}{.}{10pt}{{\thmname{#1}\thmnumber{
#2}\thmnote{ (#3)}}}
\newtheoremstyle{ass}{10pt}{10pt}{}{}{\bfseries}{.}{10pt}{{\thmname{#1}\thmnumber{
A#2}\thmnote{ (#3)}}}

\theoremstyle{thmlemcorr}
\newtheorem{theorem}{Theorem}
\numberwithin{theorem}{section}
\newtheorem{lemma}[theorem]{Lemma}

\newtheorem{proposition}[theorem]{Proposition}

\theoremstyle{thmlemcorr*}
\newtheorem{theorem*}{Theorem}
\newtheorem{lemma*}[theorem]{Lemma}
\newtheorem{corollary*}[theorem]{Corollary}
\newtheorem{proposition*}[theorem]{Proposition}
\newtheorem{problem*}[theorem]{Problem}
\newtheorem{conjecture*}[theorem]{Conjecture}

\theoremstyle{defi}
\newtheorem{definition}[theorem]{Definition}
\newtheorem{assumption}[theorem]{Assumption}

\newtheorem{problem}{Problem}

\theoremstyle{remexample}
\newtheorem{remark}[theorem]{Remark}
\newtheorem{example}[theorem]{Example}

\newtheorem{teo}[theorem]{Theorem}

\theoremstyle{ass}

\newcommand{\T}{{\mathbb{T}^{d}}}
\newcommand{\pair}[2]{\left\langle #1,#2\right\rangle}
\newcommand{\diver}{\operatorname{div}}

\newcommand{\VI}{\operatorname{VI}}
\newcommand{\Mirr}{\operatorname{Mirr}}
\newcommand{\eps}{\epsi}
\newcommand{\RR}{\mathbb{R}}
\newcommand{\bbar}{\bar\beta}
\newcommand{\gbar}{\bar\gamma}

\title[Bregman-projected mirror methods for MFG]{Bregman-projected mirror methods for regularized stationary mean-field games}

\author{Hussain Al Abdulaziz}
\address{King Abdullah University of Science and Technology (KAUST), CEMSE Division, Thuwal 23955-6900, Saudi Arabia}
\email{hussain.abdulaziz.1@kaust.edu.sa}

\author{Yuri Ashrafyan}
\address{King Abdullah University of Science and Technology (KAUST), CEMSE Division, Thuwal 23955-6900, Saudi Arabia}
\email{yuri.ashrafyan@kaust.edu.sa}

\author{Yeva Gevorgyan}
\address{King Abdullah University of Science and Technology (KAUST), CEMSE Division, Thuwal 23955-6900, Saudi Arabia}
\email{yeva.gevorgyan@kaust.edu.sa}

\author{Diogo Gomes}
\address{King Abdullah University of Science and Technology (KAUST), CEMSE Division, Thuwal 23955-6900, Saudi Arabia}
\email{diogo.gomes@kaust.edu.sa}

\date{\today}

\subjclass[2020]{35J47, 49J40, 65K15, 91A16}

\keywords{Mean-field games; monotone operators; Bregman projections; mirror descent; variational inequalities}

\thanks{D.\ Gomes was partially supported by KAUST baseline funds and KAUST SRI, Center for Uncertainty Quantification in Computational Science and Engineering.}

\begin{document}

\begin{abstract}
We develop and analyze a Bregman-projected mirror iteration for
low-order regularizations of stationary mean-field game (MFG) systems in their
natural Banach space setting.
For separable Hamiltonians of the form
\(H(x,p,m)=H_0(x,p)-g(m)\), with quadratic or super-quadratic Hamiltonian
growth and linear or super-linear density couplings, we formulate 
a
low-order \(\bar\gamma\)-Laplacian regularization of the stationary MFG system
as a variational inequality on
\(L^{\bar\beta}(\mathbb T^d)\times W^{1,\bar\gamma}(\mathbb T^d)\).
To approximate solutions of this regularized variational inequality, we
introduce a Bregman geometry matched to the mixed Lebesgue--Sobolev exponents
of the problem and 
analyze a constrained two-step mirror method with frozen operator evaluation. 
For the exact constrained iteration and each fixed regularization parameter \(\epsi>0\), we derive a
one-step Bregman inequality and use it to prove that the constrained iteration
converges strongly to the unique solution of the regularized variational
inequality under natural summability conditions on the step sizes.
Numerical experiments on one- and two-dimensional models, validated against exact test solutions, illustrate residual decay under mesh refinement
and suggest improved practical performance of the two-step implementation in the tested discretizations.
\end{abstract}

\maketitle

\section{Introduction}
\label{sec:intro}

Mean-field game (MFG) theory emerged from two complementary lines of work.
Lasry and Lions introduced the PDE approach in the stationary and finite-horizon
settings in \cite{ll1,ll2}, later synthesized in
\cite{lasryMeanFieldGames2007}. Independently, Huang, Malham\'e, and Caines
developed the Nash certainty equivalence approach to large-population stochastic
dynamic games in \cite{huangLargePopulationStochastic2006}, with the LQG
\(\epsi\)-Nash theory developed further in \cite{Caines2}. MFGs model
strategic interactions in large populations of rational agents and lead, in the
stationary case, to coupled nonlinear PDE systems consisting of a
Hamilton--Jacobi equation for the value function \(u\) and a transport equation
for the population density \(m\).

In this paper, we study numerical methods for solving stationary MFGs with a
separable Hamiltonian structure. In the power-growth regime considered below,
\(\alpha\ge2\) denotes the Hamiltonian growth exponent and \(\beta\ge1\) the
coupling exponent; we set
\[
\bar\beta:=\beta+1,
\qquad
\bar\gamma:=\alpha\frac{\beta+1}{\beta}.
\]
The unregularized stationary problem motivating the analysis is formulated as follows.
\begin{problem}\label{prob:main}
Let \(X = L^{\bar{\beta}}(\mathbb{T}^d) \times W^{1,\bar{\gamma}}(\mathbb{T}^d)\).
Given a convex Hamiltonian \(H_0\), a density coupling \(g\), and a potential
\(V\), find \((m,u)\in X\), with \(m\ge0\) a.e., solving the variational
inequality associated with the formal system
\begin{equation}\label{eq:main}
\begin{cases}
u + H_0(x,Du) \le g(m)+V(x),\\
u + H_0(x,Du) = g(m)+V(x) \quad \text{a.e. on } \{m>0\},\\
-\diver(m D_pH_0(x,Du)) + m = 1.
\end{cases}
\end{equation}
\end{problem}

The variational inequality formulation is made precise after introducing the 
operator \(\mathcal A_0\) in Section~\ref{sec:reg_operator_properties}.
The coupling between the two equations in \eqref{eq:main} is the source of the system's numerical difficulty and motivates the iterative methods studied here. 
Problems of this form have been studied extensively. Existence results for stationary MFGs based on monotone operator theory were developed in \cite{FG2,FGT1,FeGoTa21} in Hilbert spaces using high-order regularizations. Recently, a Banach-space framework using low-order regularization was established in \cite{ferreiraSolvingMeanFieldGames2025}, obtaining strong solutions for power-growth models. On the numerical side, variational and monotone methods were explored in an $L^2$ setting in \cite{almullaTwoNumericalApproaches2017}, and preconditioned extragradient methods for time-dependent cases in \cite{GG26}. 
Mirror descent and Bregman projections have also recently appeared in the MFG literature, but in settings whose geometry differs from the present one. In \cite{PPELP21,wuPopulationawareOnlineMirror2024}, online mirror descent with the Kullback--Leibler divergence is applied to finite-state, discrete-time MFGs on finite-dimensional probability simplices. In \cite{nurbekyanMonotoneInclusionMethods2024,nurbekyanNoteConvergenceMonotone2023}, time-dependent MFGs are first discretized and the resulting finite-difference system is solved by primal-dual hybrid gradient splitting. Mirror descent has also been applied to mean-field \emph{control} (rather than games) in \cite{LaurMD23}, and a primal-dual partial inverse splitting for MFGs with nonlocal couplings is developed in \cite{BADLS23}.
The mirror-descent and Bregman-projection methodology itself originates in \cite{NY83} and has been developed in \cite{BT03,AKL22,Sem17}, with applications to variational inequalities with monotone operators in \cite{HR22,IRS23}.

Despite this progress, 
we are not aware of a convergence theory for constrained Bregman mirror 
iterations posed directly in 
\(L^{\bar\beta}(\mathbb T^d)\times W^{1,\bar\gamma}(\mathbb T^d)\)
for stationary MFG variational inequalities.

The techniques developed in the discrete or Hilbert-space settings do not directly extend to this functional setting: the standard Hilbert projection is unavailable, and inner-product geometry is ill-suited to variables with differing analytic regularities.
Mirror descent instead uses the geometry induced by a strictly convex
functional. This geometry may be adapted to the mixed regularity of
\(m \in L^{\bar{\beta}}\) and \(u \in W^{1,\bar{\gamma}}\).
Resolving this gap is significant because $L^{\bar{\beta}} \times W^{1,\bar{\gamma}}$ is the natural space for the variational formulation \cite{ferreiraSolvingMeanFieldGames2025}, and an algorithm whose convergence is established in that norm is therefore directly compatible with the existence and a priori theory.

Here, we prove strong convergence of a Bregman-projected mirror scheme for a
regularized Banach-space variational inequality associated with the stationary
MFG. Our precise hypotheses on the separable Hamiltonian
\(H(x,p,m)=H_0(x,p)-g(m)\) and the power-type density coupling are stated in
Section~\ref{sec:assumptions}; a representative example covered by our
hypotheses is \(H_0(x,p)=a(x)|p|^\alpha\) and
\(g(m)=c m^{\bar\beta-1}\). The variational setting of
\cite{ferreiraSolvingMeanFieldGames2025} provides the foundation for our
work.
The paper makes four contributions. 
First, we prove a quantitative monotonicity estimate for the regularized 
operator and use it to obtain uniqueness and strong density convergence in 
the vanishing-regularization limit.
Second, under polynomial difference bounds, we establish a local Lipschitz
estimate on bounded subsets of the natural Banach space. Third, we introduce
a mixed-exponent Bregman geometry adapted to
$L^{\bar\beta}(\mathbb T^d)\times W^{1,\bar\gamma}(\mathbb T^d)$.
Fourth, we prove strong convergence of an exact constrained two-step mirror
method for each fixed $\epsi>0$.
 
Our main result is the following.
\begin{teo}\label{thm:main}
Let Assumptions~\ref{ass:base}, \ref{ass:unifmono}, and \ref{ass:alg} hold, and
let \(\epsi>0\) be fixed. Let \(K_+\) be the nonnegative-density cone defined in 
\eqref{nncone}, let \(K = K_+ \cap \overline B_X(0,R)\) for a sufficiently large radius \(R>0\), 
and fix 
\(x_0\in K\).  
Set
\[
\kappa=\max\{\bar\beta,\bar\gamma\},
\qquad
\kappa'=\frac{\kappa}{\kappa-1}.
\]
If \((\lambda_n)\) satisfies
\[
\sum_n\lambda_n=\infty,
\qquad
\sum_n\lambda_n^{\kappa'}<\infty,
\]
then the two-step mirror method on \(K\), defined by
\eqref{eq:variable_step_algorithm}, converges strongly in
\(L^{\bar\beta}(\mathbb T^d)\times W^{1,\bar\gamma}(\mathbb T^d)\) to the
unique solution \((m_\epsi,u_\epsi)\) of
\(\VI(\mathcal A_\epsi,K)\).
\end{teo}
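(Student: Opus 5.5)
The proof runs a standard mirror-descent argument for variational inequalities, driven by a one-step Bregman inequality and the quantitative (uniform) monotonicity of \(\mathcal A_\epsi\). I take the two-step scheme \eqref{eq:variable_step_algorithm} to be of the form
\[
y_n=\arg\min_{y\in K}\{\lambda_n\langle \mathcal A_\epsi(x_n),y\rangle + D(y,x_n)\},\qquad
x_{n+1}=\arg\min_{x\in K}\{\lambda_n\langle \mathcal A_\epsi(y_n),x\rangle + D(x,x_n)\},
\]
possibly with the operator frozen at one of the points. Here \(D\) is the Bregman distance generated by the mixed-exponent functional \(\phi(m,u)=\frac1{\bar\beta}\|m\|_{L^{\bar\beta}}^{\bar\beta}+\frac1{\bar\gamma}\|u\|_{W^{1,\bar\gamma}}^{\bar\gamma}\) (or its pointwise analogue).

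The first ingredient is the geometry of \(D\) on the bounded set \(K\). Since \(\bar\beta,\bar\gamma\ge2\), Clarkson-type inequalities give \(\kappa\)-uniform convexity on bounded sets:
\[
D(x,y)\ge c_R\|x-y\|_X^\kappa .
\]
Here the exponent \(\kappa=\max\{\bar\beta,\bar\gamma\}\) enters because the worse of the two power moduli dominates on a ball. In addition, \(D(x_\epsi,\cdot)\) is continuous and \(\nabla\phi\) is bounded on \(K\).

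The second ingredient is the local Lipschitz estimate already established, together with boundedness of \(K\). These give \(\|\mathcal A_\epsi(x)\|_{X^*}\le M\) on \(K\).

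\textbf{Step 1: the one-step inequality.} From the three-point identity for Bregman projections, applied with comparison point \(x_\epsi:=(m_\epsi,u_\epsi)\in K\), I aim for
\[
D(x_\epsi,x_{n+1})\le D(x_\epsi,x_n)-\lambda_n\langle \mathcal A_\epsi(y_n),y_n-x_\epsi\rangle + C\lambda_n^{\kappa'} .
\]
The error term comes from Young's inequality: the cross term \(\lambda_n\langle \mathcal A_\epsi(y_n),x_{n+1}-y_n\rangle\) is absorbed into \(D(x_{n+1},x_n)\ge c\|x_{n+1}-x_n\|^\kappa\), leaving \(C\lambda_n^{\kappa'}\|\mathcal A_\epsi\|^{\kappa'}\).

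\textbf{Step 2: use monotonicity.} The VI at \(x_\epsi\) gives \(\langle \mathcal A_\epsi(x_\epsi),y_n-x_\epsi\rangle\ge0\). Combined with the quantitative monotonicity estimate, this yields
\[
\langle \mathcal A_\epsi(y_n),y_n-x_\epsi\rangle\ge \rho(y_n-x_\epsi)\ge0,
\]
where \(\rho\) controls \(\|m-m_\epsi\|_{L^{\bar\beta}}^{\bar\beta}\) plus an \(\epsi\)-dependent \(\bar\gamma\)-Laplacian term \(\epsi\|D(u-u_\epsi)\|_{L^{\bar\gamma}}^{\bar\gamma}\). For fixed \(\epsi>0\) this term is what makes \(\rho\) coercive in the full \(X\)-norm; the zero-order \(u\)-term handles the remaining part of the norm.

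\textbf{Step 3: conclude.} Summing the inequality gives
\[
\sum_n\lambda_n\rho(y_n-x_\epsi)\le D(x_\epsi,x_0)+C\sum_n\lambda_n^{\kappa'}<\infty .
\]
It follows that \(D(x_\epsi,x_n)\) converges, being quasi-Fejér with summable perturbation. Since \(\sum\lambda_n=\infty\), we get \(\liminf\rho(y_n-x_\epsi)=0\), hence \(\|y_{n_k}-x_\epsi\|_X\to0\) along a subsequence. The estimate \(c\|y_n-x_n\|^\kappa\le D(y_n,x_n)\lesssim \lambda_n^{\kappa'}\) shows \(\|y_n-x_n\|\to0\), so \(x_{n_k}\to x_\epsi\). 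By continuity of \(D(x_\epsi,\cdot)\) at \(x_\epsi\), the limit of \(D(x_\epsi,x_n)\) must be \(0\). Uniform convexity then gives strong convergence of the whole sequence. Uniqueness of \(x_\epsi\) follows from the same strict monotonicity.

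The main obstacle is Step 1 in the mixed-exponent geometry. Specifically, one must show that \(D\) is uniformly convex of power type \(\kappa\) on all of \(K\), with a constant independent of \(n\), even though the \(m\) and \(u\) blocks have different exponents and \(\phi\) is not homogeneous. This is where \(K\subset\overline B_X(0,R)\) is essential: on a ball, \(\|h\|^{\bar\beta}\ge R^{\bar\beta-\kappa}\|h\|^\kappa\) for \(\|h\|\le 2R\). I would first verify the block-wise estimate and then sum the blocks, losing only constants. A secondary point is ensuring that the frozen operator evaluation still yields a single Young-absorbable error term; I would handle this by bounding \(\|\mathcal A_\epsi(x_n)-\mathcal A_\epsi(y_n)\|\) by \(2M\) rather than using Lipschitz continuity.
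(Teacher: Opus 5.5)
\textbf{Review: the proposal analyzes a different scheme, and its handling of the frozen evaluation leaves a non-summable error.}

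You have analyzed a different algorithm from the one in \eqref{eq:variable_step_algorithm}. Your template is mirror-prox: the second step is re-centered at $x_n$ and uses $\mathcal A_\epsi(y_n)$. For that scheme your argument is essentially the standard one. The paper's method instead evaluates the operator once, $A_n=\mathcal A_\epsi(x_n)$, and takes the second step from the predictor, $x_{n+1}=\Mirr_{y_n}(\lambda_n A_n)$. The paper explicitly says this is not extragradient.

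Your hedge ``possibly with the operator frozen'' does not cover this scheme. Freezing at $x_n$ in your template just gives $x_{n+1}=y_n$. Moreover, your Step~1 uses a single three-point identity centered at $x_n$ and absorbs into $D(x_{n+1},x_n)$, which does not describe a projection centered at $y_n$. The correct bookkeeping is to apply the one-step inequality (Proposition~\ref{prop:one_step}) twice and add. This puts $D(y_n,x_n)+D(x_{n+1},y_n)$ on the left and $\lambda_n\langle A_n,\,2x_\epsi-y_n-x_{n+1}\rangle$ on the right.

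The real gap is in Step~2 combined with your treatment of the frozen evaluation. In this scheme $\mathcal A_\epsi(y_n)$ is never computed. So producing your dissipation term $\langle\mathcal A_\epsi(y_n),y_n-x_\epsi\rangle$ costs the mismatch $\lambda_n\langle\mathcal A_\epsi(x_n)-\mathcal A_\epsi(y_n),\,x_\epsi-y_n\rangle$. Here the operator difference is paired with $x_\epsi-y_n$, which is \emph{not} small. Your bound $\|\mathcal A_\epsi(x_n)-\mathcal A_\epsi(y_n)\|_{X^*}\le 2M$ therefore yields only $4MR\lambda_n$. This is not summable, since $\sum\lambda_n=\infty$, so the quasi-Fej\'er argument fails.

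The paper closes exactly this step with two ingredients: the local Lipschitz estimate (Theorem~\ref{thm:lip}) and the increment bound (Lemma~\ref{lem:mirror_increment}). Together they bound the mismatches at $y_n$ and at $x_{n+1}$ by $2RL_{R,\epsi}\lambda_n\bigl(2\|x_n-y_n\|_X+\|x_{n+1}-y_n\|_X\bigr)\le C_{R,\epsi}\lambda_n^{\kappa'}$. The dissipation is then $c_{\mathrm{mon}}\lambda_n\mathcal E_\epsi(x_{n+1})$.

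If you want to keep your ``boundedness instead of Lipschitz'' idea, take the dissipation at $x_n$, where the operator is actually evaluated. Split $2x_\epsi-y_n-x_{n+1}=2(x_\epsi-x_n)+(x_n-y_n)+(x_n-x_{n+1})$. The variational inequality and \eqref{eq:unif_mono} applied to the first piece give $-2c_{\mathrm{mon}}\lambda_n\mathcal E_\epsi(x_n)$. The remaining pieces are bounded by $\lambda_n\Xi_{R,\epsi}\bigl(2\|x_n-y_n\|_X+\|x_{n+1}-y_n\|_X\bigr)\le C\lambda_n^{\kappa'}$. With either fix, your Step~3 goes through as in the paper.
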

Theorem~\ref{thm:main} extends the monotone approach of \cite{almullaTwoNumericalApproaches2017} from $L^2$ to $L^{\bar{\beta}} \times W^{1,\bar{\gamma}}$ by replacing the inner-product geometry with a mixed-exponent Bregman divergence. Moreover, it complements the existence and a priori theory of \cite{ferreiraSolvingMeanFieldGames2025} with a convergent iterative scheme in the same function-space norm.
The proof combines quantitative monotonicity of the regularized MFG operator, 
a bounded-set Lipschitz estimate for controlling the frozen-evaluation error, 
and a one-step Bregman inequality for the constrained mirror map.
 The vanishing-regularization passage is inherited from 
\cite{ferreiraSolvingMeanFieldGames2025}; we additionally record a strong 
density-convergence consequence under the quantitative monotonicity assumption.

The convergence theorem is a fixed-\(\epsi\) result. The numerical experiments
in Section~\ref{sec:numerics} validate the discrete framework against exact
test solutions and 
suggest that, for the tested discretizations and step-size normalization, the 
two-step implementation substantially reduces outer iteration counts relative 
to the one-step baseline.

The paper is organized as follows. Section~\ref{sec:assumptions} introduces the
notation, exponent relations, and structural assumptions.
Section~\ref{sec:reg_operator_properties} defines the regularized operator,
proves quantitative monotonicity, 
recalls the fixed-\(\eps\) well-posedness and vanishing-regularization result, strengthens the latter to strong convergence of the densities as \(\eps\to0\), and finally establishes the local Lipschitz estimate used in the convergence proof. Section~\ref{sec:bregman} introduces the Bregman geometry adapted to the mixed exponents. Section~\ref{sec:algorithms} describes the two-step mirror method, and Section~\ref{sec:convergence} contains the convergence analysis. 
Section~\ref{sec:numerics} presents one- and two-dimensional numerical
experiments, including validation against exact test solutions, illustrating
residual decay and the effect of the second mirror step.
Section~\ref{sec:conclusion} discusses limitations and future directions.

\section{Preliminaries}\label{sec:assumptions}

This section sets up the function spaces, exponents, and structural hypotheses used throughout the paper. The assumptions are organized into three layers, each playing a distinct role in the analysis. Assumption~\ref{ass:base} contains the structural hypotheses on $H_0$, $g$, and $V$ that suffice to define the regularized operator and to establish monotonicity and hemicontinuity. Assumption~\ref{ass:unifmono} strengthens the monotonicity of $g$ to a quantitative bound; this is what supplies coercivity in the density variable, uniqueness of the regularized solution, and the dissipation rate driving the iterative scheme. Assumption~\ref{ass:alg} adds polynomial difference estimates on $D_pH_0$ and $g$, 
which are needed for the bounded-set Lipschitz estimate of the regularized operator and for the convergence analysis of the mirror algorithm.

\subsection{Notation}
Throughout the paper, 
$\T:=\RR^d/\mathbb Z^d$ is the flat \(d\)-dimensional torus, and
we work with the exponents
\[
\alpha\ge 2,
\qquad
\beta\ge 1,
\qquad
\bbar:=\beta+1,
\qquad
\gbar:=\alpha\frac{\beta+1}{\beta}.
\]
These exponents determine the functional spaces for $m$ and $u$ in Problem~\ref{prob:main}.

The exponents $\bbar$ and $\gbar$ are chosen so that the Hamiltonian, coupling,
and transport flux have the integrability required by the duality pairings in
the variational formulation.
In particular, \(H_0(x,Du)\sim |Du|^\alpha\) belongs to
\(L^{\bar\beta'}(\T)\), while
\(mD_pH_0(x,Du)\sim m|Du|^{\alpha-1}\) belongs to
\(L^{\bar\gamma'}(\T;\RR^d)\).
Writing $\bbar' = \frac{\beta+1}{\beta}$ and $\gbar' = \frac{\alpha(\beta+1)}{\alpha(\beta+1)-\beta}$ for the conjugate exponents of $\bbar$ and $\gbar$, we have the identities
\[
\frac{\alpha}{\gbar}=\frac1{\bbar'},
\qquad
\frac{\alpha-1}{\gbar}+\frac1{\bbar}=\frac1{\gbar'},
\]
which are used repeatedly in the H\"older estimates and in the Bregman bounds adapted to the mixed Lebesgue--Sobolev structure. The standing assumptions $\alpha\ge2$ and $\beta\ge1$ also imply the magnitude bounds
\[
\bbar\ge2,
\qquad
\bbar'>1,
\qquad
\gbar=\alpha\bbar'>2,
\qquad
\gbar'>1.
\]
The lower bounds $\bbar\ge2$ and $\gbar>2$ are what make the pointwise inequalities~\eqref{eq:power_mono_pointwise} and~\eqref{eq:p_lap_mono} available with $r=\bbar$ and $r=\gbar$, which is the form in which they enter the monotonicity and Bregman estimates.

We use
\[
\|u\|_{W^{1,\bar\gamma}(\T)}
:=
\|u\|_{L^{\bar\gamma}(\T)}+\|Du\|_{L^{\bar\gamma}(\T)}.
\]
We work on
\[
X:=L^{\bbar}(\T)\times W^{1,\gbar}(\T).
\]
We equip $X$ with the product norm
\[
\|(m,u)\|_X:=\|m\|_{L^{\bbar}}+\|u\|_{W^{1,\gbar}}.
\]
Then $X$ is reflexive since $1<\bbar,\gbar<\infty$, and
\[
X^*=L^{\bbar'}(\T)\times W^{-1,\gbar'}(\T)
\]
with the norm
\[
\|(f,\ell)\|_{X^*}:=\|f\|_{L^{\bbar'}}+\|\ell\|_{W^{-1,\gbar'}}
\]
and duality pairing
\[
\pair{(f,\ell)}{(m,u)}_{X^*,X}
:=
\int_\T f\,m\,dx
+
\pair{\ell}{u}_{W^{-1,\gbar'},W^{1,\gbar}}.
\]

The admissible set for the variational inequality is
\begin{equation}
\label{nncone}
K_+:=
\left\{
(m,u)\in X:\;
m\ge0\ \text{a.e. in }\T
\right\}.
\end{equation}
The nonnegativity constraint \(m\ge0\) reflects the physical interpretation of \(m\) as a density of agents.
The set $K_+$ is nonempty, convex, and norm-closed in $X$, hence weakly closed. 
No mass constraint is imposed in \(K_+\). Thus, when the identity
\(\int_{\T}m\,dx=1\) holds for the unregularized limit equation, it is a
consequence of the transport equation rather than a constraint in the
admissible set.
For fixed \(\epsi>0\), the solution of the regularized problem may not have unit mass. 

Throughout this paper, positive constants denoted by \(C\) may change from line
to line and 
depend only on the fixed structural constants, the exponents, the domain, and 
the prescribed data norms such as \(\|V\|_{L^{\bbar'}}\).
Constants denoted by \(C_R\) may also depend on the radius \(R\) of
a bounded \(X\)-set, while constants denoted by \(C_{R,\eps}\) may also depend
on the fixed regularization parameter \(\eps>0\). These constants never depend
on the particular elements being estimated.

\subsection{Elementary inequalities}
We record two pointwise estimates and the Bregman bounds they imply, all of
which are used repeatedly in
Sections~\ref{sec:reg_operator_properties}--\ref{sec:bregman}. 
These pointwise inequalities will be applied to the integral representation 
of Bregman divergences obtained from the fundamental theorem of calculus, yielding the Bregman bounds in Lemma~\ref{lem:bregman_power}.

For every \(r\ge2\) and all \(a,b\ge0\),
\begin{equation}\label{eq:power_mono_pointwise}
\bigl(a^{r-1}-b^{r-1}\bigr)(a-b)
\ge
|a-b|^r.
\end{equation}
For every \(r\ge2\) and all \(\xi,\eta\in\RR^N\),
\begin{equation}\label{eq:p_lap_mono}
\bigl(|\xi|^{r-2}\xi-|\eta|^{r-2}\eta\bigr)\cdot(\xi-\eta)
\ge
2^{2-r}|\xi-\eta|^r.
\end{equation}
Inequality~\eqref{eq:power_mono_pointwise} is the nonnegative scalar case of~\eqref{eq:p_lap_mono},
with the sharper constant \(1\) obtained by a direct one-dimensional argument.

\begin{lemma}[Power Bregman bounds]\label{lem:bregman_power}
Let \(r\ge2\). Define the scalar function \(\psi_r(s)=\frac1r s^r\) for \(s\ge0\), and the vector function \(\varphi_r(\xi)=\frac1r|\xi|^r\) for \(\xi\in\RR^N\).
Then, for all \(a,b\ge0\),
\begin{equation}\label{eq:scalar_bregman_power}
\psi_r(a)-\psi_r(b)-\psi_r'(b)(a-b)
\ge
\frac1r |a-b|^r.
\end{equation}
Furthermore, with \(c_r = \frac{2^{2-r}}{r}\), for all \(\xi,\eta\in\RR^N\),
\begin{equation}\label{eq:vector_bregman_power}
\varphi_r(\xi)-\varphi_r(\eta)-D\varphi_r(\eta)\cdot(\xi-\eta)
\ge
c_r|\xi-\eta|^r.
\end{equation}
\end{lemma}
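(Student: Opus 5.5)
The natural route is the fundamental theorem of calculus: write each Bregman divergence as an integral of the derivative of the convex function along the segment from the base point to the target. Then feed the pointwise monotonicity inequalities \eqref{eq:power_mono_pointwise} and \eqref{eq:p_lap_mono} into the integrand. I treat the vector case in detail; the scalar case is identical with \eqref{eq:power_mono_pointwise} in place of \eqref{eq:p_lap_mono}.

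\textbf{Integral representation.} Fix \(\xi,\eta\in\RR^N\) and set \(\eta_t:=\eta+t(\xi-\eta)\) for \(t\in[0,1]\). Since \(r\ge2\), \(\varphi_r\) is \(C^1\) with \(D\varphi_r(\zeta)=|\zeta|^{r-2}\zeta\). Hence
\[
\varphi_r(\xi)-\varphi_r(\eta)-D\varphi_r(\eta)\cdot(\xi-\eta)
=\int_0^1\bigl(D\varphi_r(\eta_t)-D\varphi_r(\eta)\bigr)\cdot(\xi-\eta)\,dt .
\]

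\textbf{Applying the pointwise inequality.} For \(t\in(0,1]\) we have \(\xi-\eta=(\eta_t-\eta)/t\). Applying \eqref{eq:p_lap_mono} to the pair \(\eta_t,\eta\), the integrand is at least
\[
\tfrac1t\,2^{2-r}|\eta_t-\eta|^r=2^{2-r}t^{r-1}|\xi-\eta|^r .
\]
Integrating over \(t\), and using \(\int_0^1 t^{r-1}\,dt=1/r\), gives \eqref{eq:vector_bregman_power} with constant \(c_r=2^{2-r}/r\).

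\textbf{Scalar case.} The argument is the same with \(\psi_r'(s)=s^{r-1}\) on \([0,\infty)\). Convexity keeps \(b_t:=b+t(a-b)\) nonnegative, so \eqref{eq:power_mono_pointwise} applies to the pair \(b_t,b\). The integrand is then at least \(t^{r-1}|a-b|^r\), which yields the constant \(1/r\) in \eqref{eq:scalar_bregman_power}. The function \(\psi_r\) is \(C^1\) on \([0,\infty)\), with one-sided derivative \(0\) at \(0\), so the fundamental theorem of calculus applies, including when \(b=0\).

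\textbf{Main obstacle.} The only real issue is justifying the two pointwise inequalities, which the paper records without proof. If needed, \eqref{eq:power_mono_pointwise} follows by symmetry and homogeneity: take \(a>b\ge0\) and note that \(a^{r-1}-b^{r-1}\ge(a-b)^{r-1}\), by superadditivity of \(s\mapsto s^{r-1}\) for \(r-1\ge1\). Inequality \eqref{eq:p_lap_mono} is the standard \(p\)-Laplacian monotonicity estimate for \(r\ge2\), which I would simply cite.
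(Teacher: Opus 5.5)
Your proposal is correct and follows the paper's route. You write each Bregman divergence by the fundamental theorem of calculus along $\eta+t(\xi-\eta)$, bound the integrand below by $2^{2-r}t^{r-1}|\xi-\eta|^r$ (resp.\ $t^{r-1}|a-b|^r$) using the pointwise monotonicity inequalities, and integrate to get $c_r=2^{2-r}/r$ (resp.\ $1/r$). You also spell out the rescaling $\xi-\eta=(\eta_t-\eta)/t$, which the paper leaves implicit, and your superadditivity argument correctly proves the scalar pointwise inequality that the paper only asserts.
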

\begin{remark}
The scalar estimate is stated on \([0,\infty)\), which is the only case used 
for the density variable on \(K_+\).
\end{remark}
\begin{proof}
Both bounds follow from the fundamental theorem of calculus and the respective pointwise inequalities.
For the scalar case, expressing the Bregman divergence in integral form yields
\[
\psi_r(a)-\psi_r(b)-\psi_r'(b)(a-b)
=
\int_0^1
\bigl(\psi_r'(b+t(a-b))-\psi_r'(b)\bigr)(a-b)\,dt.
\]
Since \(\psi_r'(s)=s^{r-1}\), applying \eqref{eq:power_mono_pointwise} bounds the integrand from below by \(t^{r-1}|a-b|^r\). Integrating over \(t\in[0,1]\) establishes \eqref{eq:scalar_bregman_power}.
The vector case \eqref{eq:vector_bregman_power} follows by an identical argument: replacing \(\psi_r\) with \(\varphi_r\) and using \eqref{eq:p_lap_mono} bounds the corresponding integrand by \(2^{2-r}t^{r-1}|\xi-\eta|^r\), and integration then yields \(c_r = 2^{2-r}/r\).
\end{proof}

\subsection{Assumptions}

\begin{assumption}[Structural assumptions]\label{ass:base}
The functions \(H_0\), \(g\), and \(V\) satisfy the following conditions.
\begin{enumerate}[label=(A\arabic*)]
\item
$H_0:\T\times\RR^d\to\RR$ is measurable in $x$, continuously differentiable in $p$,
and for a.e.\ $x\in\T$ the map $p\mapsto H_0(x,p)$ is convex.
We also assume that \(D_pH_0\) is jointly measurable in \((x,p)\).

\item
There exists \(C>0\) such that, for a.e. \(x\in\T\) and all \(p\in\RR^d\),
\[
\frac1C |p|^\alpha-C
\le
H_0(x,p)
\le
C(1+|p|^\alpha).
\]

\item
There exists \(C>0\) such that, for a.e. \(x\in\T\) and all \(p\in\RR^d\),
\begin{equation}\label{eq:DpH_growth}
|D_pH_0(x,p)|
\le
C(1+|p|^{\alpha-1}).
\end{equation}

\item\label{it:g_basic}
\(g:[0,\infty)\to\RR\) is continuous, nondecreasing, and satisfies \(g(0)=0\).

\item\label{it:g_growth_assum}
There exists \(C>0\) such that, for all \(m\ge0\),
\begin{equation}\label{eq:g_growth}
|g(m)|
\le
C(1+m^{\bbar-1}).
\end{equation}

\item
\(V\in L^{\bbar'}(\T)\).
\end{enumerate}
\end{assumption}

Assumption~\ref{ass:base} does not, by itself, give coercivity in the \(m\)-variable. Indeed, conditions~\ref{it:g_basic}--\ref{it:g_growth_assum} allow $g$ to grow strictly slower than $m^{\bbar-1}$, in which case $\int_\T g(m)m\,dx$ does not control $\|m\|_{L^{\bbar}}^{\bbar}$ from below. The required density coercivity is instead supplied by Assumption~\ref{ass:unifmono} below.

\begin{assumption}[Quantitative monotonicity of the coupling]\label{ass:unifmono}
There exists \(c_g>0\) such that, for all \(m,\theta\ge0\),
\begin{equation}\label{eq:g_mono}
(g(m)-g(\theta))(m-\theta)
\ge
c_g |m-\theta|^{\bbar}.
\end{equation}
\end{assumption}

Thus, the convergence theory below applies to uniformly monotone power-type 
couplings and not to merely monotone couplings such as bounded or logarithmic 
interactions. Since \(g(0)=0\), taking \(\theta=0\) in \eqref{eq:g_mono} gives
\[
g(m)m\ge c_g m^{\bbar}
\qquad
\text{for all }m\ge0.
\]
This supplies the coercive lower bound used for uniqueness in Theorem~\ref{thm:exist_limit} and for the dissipation rate in the convergence analysis of Section~\ref{sec:convergence}.

\begin{assumption}[Polynomial difference bounds]\label{ass:alg}
The maps \(D_pH_0\) and \(g\) satisfy the following
estimates.
\begin{enumerate}[label=(B\arabic*)]
\item\label{it:DpH_lip}
There exists \(C>0\) such that, for a.e. \(x\in\T\) and all
\(p,q\in\RR^d\),
\begin{equation}\label{eq:DpH_lip}
|D_pH_0(x,p)-D_pH_0(x,q)|
\le
C\bigl(1+|p|^{\alpha-2}+|q|^{\alpha-2}\bigr)|p-q|.
\end{equation}

\item\label{it:g_lip}
There exists \(C>0\) such that, for all \(m,\theta\ge0\),
\begin{equation}\label{eq:g_lip}
|g(m)-g(\theta)|
\le
C\bigl(1+m^{\bbar-2}+\theta^{\bbar-2}\bigr)|m-\theta|.
\end{equation}
\end{enumerate}
When \(\bbar=2\), the powers \(m^{\bbar-2}\) and
\(\theta^{\bbar-2}\) are interpreted as \(1\).
\end{assumption}

A sufficient condition for~\ref{it:DpH_lip} is that \(H_0\) is
\(C^{1,1}_{\mathrm{loc}}\) in \(p\) and
\[
|D^2_{pp}H_0(x,p)|
\le
C(1+|p|^{\alpha-2})
\]
for a.e. \(x\in\T\) and all \(p\in\RR^d\). Similarly,
\eqref{eq:g_lip} follows if \(g\) is locally Lipschitz on \(\RR_+\) and its
derivative, where it exists, satisfies
\[
|g'(m)|\le C(1+m^{\bbar-2})
\qquad
\text{for }m\ge0.
\]

Although \ref{it:g_lip} with \(\theta=0\) implies the upper bound in
\ref{it:g_growth_assum}, we retain \ref{it:g_growth_assum} in
Assumption~\ref{ass:base} as a baseline requirement for the existence theory.
Assumption~\ref{ass:alg} is used only for the local Lipschitz estimate on
bounded sets, which enters the Bregman convergence analysis and relies on
\eqref{eq:DpH_lip}, \eqref{eq:g_lip}, and Assumption~\ref{ass:base}, but not on
the quantitative monotonicity condition~\eqref{eq:g_mono}.

\begin{example}[Power-type Hamiltonians]\label{ex:hamiltonians}
The assumptions above are satisfied by
\[
H_0(x,p)=a(x)|p|^\alpha+b(x)\cdot p+c_0(x),
\qquad
g(m)=c\,m^{\bbar-1},
\]
where \(a,c_0\in L^\infty(\T)\), \(b\in L^\infty(\T;\RR^d)\),
\(\operatorname*{ess\,inf}_{\T} a>0\), and \(c>0\)
(recall $\bbar=\beta+1$).
More generally, one may add a perturbation $\widetilde g$,
\[
g(m)=c\,m^{\bbar-1}+\widetilde g(m),
\]
provided $\widetilde g(0)=0$, $\widetilde g$ is nondecreasing, and
$\widetilde g$ satisfies bounds compatible with~\eqref{eq:g_growth} and~\eqref{eq:g_lip};
the principal term $c\,m^{\bbar-1}$ satisfies the quantitative
monotonicity~\eqref{eq:g_mono}, and the monotonicity of $\widetilde g$
preserves the lower bound. 
Indeed, by \eqref{eq:power_mono_pointwise},
\[
c\bigl(m^{\bar\beta-1}-\theta^{\bar\beta-1}\bigr)(m-\theta)
\ge c|m-\theta|^{\bar\beta}.
\]
The preceding example is in the form used in the
numerical experiments of Section~\ref{sec:numerics}.
\end{example}

\section{The regularized operator}\label{sec:reg_operator_properties}

This section introduces the regularized MFG operator and records the properties
needed for the algorithmic analysis. 
We first define the operator on the unbounded nonnegative-density constraint
set \(K_+\), where the existence theory is naturally posed.
We then prove a quantitative monotonicity estimate, recall the fixed-\(\eps\)
well-posedness and vanishing-regularization result, and finally establish the
local Lipschitz estimate used in the convergence proof.

\subsection{Definition and variational formulation}

The original stationary MFG system corresponds to the formal operator
\[
\mathcal A_0(m,u)
=
\begin{bmatrix}
-u - H_0(x,Du) + g(m) + V(x)\\[4pt]
-\diver\!\big(m D_pH_0(x,Du)\big) + m - 1
\end{bmatrix},
\]
acting on \(K_+\).

The difficulty is that the unregularized operator does not provide the
value-function coercivity needed for the Banach-space variational theory.
In particular, monotonicity of the coupling controls the density variable, 
whereas no comparable coercive term controls the full 
\(W^{1,\gbar}\)-norm of \(u\) without regularization.
 Following the low-order regularization framework of~\cite{ferreiraSolvingMeanFieldGames2025}, we therefore replace $\mathcal A_0$ by a coercive approximation $\mathcal A_\eps$.

For $\eps>0$, define $\mathcal A_\eps:K_+\to X^*$ by
\begin{equation}\label{eq:operator_duality}
\begin{aligned}
\pair{\mathcal A_\eps(m,u)}{(\mu,v)}
&:=
\int_\T \bigl[-u-H_0(x,Du)+g(m)+V(x)\bigr]\mu\,dx
\\
&\quad
+\int_\T m D_pH_0(x,Du)\cdot Dv\,dx
+\int_\T (m-1)v\,dx
\\
&\quad
+\eps\int_\T |Du|^{\gbar-2}Du\cdot Dv\,dx
+\eps\int_\T |u|^{\gbar-2}u\,v\,dx,
\end{aligned}
\end{equation}
for every $(\mu,v)\in X$. The exponent identities in Section~\ref{sec:assumptions} ensure that this
pairing is well defined: \(H_0(x,Du)\), \(g(m)\), \(u\), and \(V\) belong to
\(L^{\bbar'}(\T)\), while
\(mD_pH_0(x,Du)\in L^{\gbar'}(\T;\RR^d)\), and \(m-1\) defines an element of
\(W^{-1,\gbar'}(\T)\).

Equivalently, in distributional form,
\begin{equation}\label{eq:operator}
\mathcal A_\eps\binom{m}{u}
=
\begin{bmatrix}
-u - H_0(x,Du) + g(m) + V(x)\\[6pt]
-\diver\!\big(m D_pH_0(x,Du)\big) + m - 1
-\eps\,\diver\!\big(|Du|^{\gbar-2}Du\big)
+\eps\,|u|^{\gbar-2}u
\end{bmatrix}.
\end{equation}

The two \(\eps\)-terms form the low-order \(W^{1,\gbar}\)-duality 
regularization. They are matched to the Banach-space geometry of $W^{1,\gbar}(\T)$ and contribute the coercive quantity
\[
\eps\int_\T |Du|^{\gbar}\,dx
+
\eps\int_\T |u|^{\gbar}\,dx
\]
to the energy estimates.

The regularized variational problem $\VI(\mathcal A_\eps,K_+)$ is:
find $(m_\eps,u_\eps)\in K_+$ such that
\begin{equation}\label{eq:VI_eps}
\pair{\mathcal A_\eps(m_\eps,u_\eps)}{(\mu,v)-(m_\eps,u_\eps)} \ge 0
\qquad
\text{for all } (\mu,v)\in K_+.
\end{equation}
When $\eps=0$, one recovers the original operator $\mathcal A_0$.

The existence theory and the vanishing-regularization limit for
\eqref{eq:VI_eps} are recalled in Theorem~\ref{thm:exist_limit} from
\cite{ferreiraSolvingMeanFieldGames2025}.

\subsection{Quantitative monotonicity}\label{sec:quant_mono}

We now record the quantitative monotonicity estimate used later for uniqueness
and for the dissipation in the convergence proof. The qualitative monotonicity,
hemicontinuity, and coercivity needed for existence are part of the
Banach-space theory recalled from~\cite{ferreiraSolvingMeanFieldGames2025}.

\begin{theorem}[Quantitative monotonicity of $\mathcal A_\eps$]\label{thm:Aeps_properties}
Suppose Assumptions~\ref{ass:base} and~\ref{ass:unifmono} hold. There exists
\(c>0\), independent of \(\eps\), such that for all
\((m,u),(\theta,v)\in K_+\),
\begin{equation}\label{eq:unif_mono}
\pair{\mathcal A_\eps(m,u)-\mathcal A_\eps(\theta,v)}{(m-\theta,u-v)}
\ge
c\|m-\theta\|_{L^{\bbar}}^{\bbar}
+
c\eps\|u-v\|_{W^{1,\gbar}}^{\gbar}.
\end{equation}
\end{theorem}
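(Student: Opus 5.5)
We expand the pairing \eqref{eq:operator_duality} with $(\mu,v)$ replaced by $(m-\theta,u-v)$ for both evaluations and subtract. Grouping terms, the difference $\pair{\mathcal A_\eps(m,u)-\mathcal A_\eps(\theta,v)}{(m-\theta,u-v)}$ splits into three pieces: the coupling term $\int_\T (g(m)-g(\theta))(m-\theta)\,dx$; the regularization term
\[
\eps\int_\T\bigl(|Du|^{\gbar-2}Du-|Dv|^{\gbar-2}Dv\bigr)\cdot(Du-Dv)\,dx+\eps\int_\T\bigl(|u|^{\gbar-2}u-|v|^{\gbar-2}v\bigr)(u-v)\,dx;
\]
and the MFG transport--Hamiltonian cross terms. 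The linear pieces $-\int(u-v)(m-\theta)$ (from the first component) and $\int (m-\theta)(u-v)$ (from the $m-1$ term; the constant $-1$ cancels, as does $V$) cancel exactly.

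The remaining cross terms are
\[
\int_\T \bigl[-H_0(x,Du)+H_0(x,Dv)\bigr](m-\theta)\,dx+\int_\T \bigl(mD_pH_0(x,Du)-\theta D_pH_0(x,Dv)\bigr)\cdot(Du-Dv)\,dx .
\]
Reorganizing by $m$ and $\theta$ gives
\[
\int_\T m\bigl[H_0(x,Dv)-H_0(x,Du)-D_pH_0(x,Du)\cdot(Dv-Du)\bigr]dx+\int_\T \theta\bigl[H_0(x,Du)-H_0(x,Dv)-D_pH_0(x,Dv)\cdot(Du-Dv)\bigr]dx .
\]
Each bracket is nonnegative a.e. by convexity of $p\mapsto H_0(x,p)$ from (A1), and $m,\theta\ge0$ on $K_+$. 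So this whole block is $\ge0$. This is the standard Lasry--Lions monotonicity computation. Integrability of every term is guaranteed by the exponent identities of Section~\ref{sec:assumptions} and the growth bounds (A2), (A3), so the splitting of integrals is legitimate.

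For the coupling term, Assumption~\ref{ass:unifmono} gives pointwise $\ge c_g|m-\theta|^{\bbar}$. Integrating yields $c_g\|m-\theta\|_{L^{\bbar}}^{\bbar}$. For the regularization term, apply \eqref{eq:p_lap_mono} with $r=\gbar>2$, once with $N=d$ for the gradients and once with $N=1$ for the values. This gives at least $\eps 2^{2-\gbar}\bigl(\|Du-Dv\|_{L^{\gbar}}^{\gbar}+\|u-v\|_{L^{\gbar}}^{\gbar}\bigr)$.

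The only remaining point, and the mild obstacle, is converting this to the norm $\|u-v\|_{W^{1,\gbar}}=\|u-v\|_{L^{\gbar}}+\|D(u-v)\|_{L^{\gbar}}$. By convexity of $s\mapsto s^{\gbar}$,
\[
(a+b)^{\gbar}\le 2^{\gbar-1}(a^{\gbar}+b^{\gbar}),
\]
so the sum of $\gbar$-th powers is at least $2^{1-\gbar}\|u-v\|_{W^{1,\gbar}}^{\gbar}$. Taking $c=\min\{c_g,\,2^{3-2\gbar}\}$ gives \eqref{eq:unif_mono} with $c$ independent of $\eps$.
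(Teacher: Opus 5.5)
Your proposal is correct and matches the paper's proof essentially step for step. Both arguments cancel the zero-order cross terms, regroup the remaining cross terms into $m$- and $\theta$-weighted Bregman divergences of $H_0$ (nonnegative by convexity), and bound the coupling term by Assumption~\ref{ass:unifmono} and the two $\eps$-terms by \eqref{eq:p_lap_mono}. They also close with the same convexity inequality, which gives the constant $c=\min\{c_g,2^{3-2\gbar}\}$.
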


\begin{proof}
Expanding the duality pairing, the cross-terms produced by the zero-order
differences \(-(u-v)\) and \((m-\theta)\) cancel exactly, leaving
\[
\begin{aligned}
&\pair{\mathcal A_\eps(m,u)-\mathcal A_\eps(\theta,v)}{(m-\theta,u-v)}
=
\int_\T (g(m)-g(\theta))(m-\theta)\,dx
\\
&\qquad
+\eps\int_\T
\bigl(|Du|^{\gbar-2}Du-|Dv|^{\gbar-2}Dv\bigr)\cdot(Du-Dv)\,dx
\\
&\qquad
+\eps\int_\T
\bigl(|u|^{\gbar-2}u-|v|^{\gbar-2}v\bigr)(u-v)\,dx
\\
&\qquad
+\mathcal R(m,\theta,u,v),
\end{aligned}
\]
where $\mathcal R$ collects the Hamilton--Jacobi and transport contributions,
\[
\begin{aligned}
\mathcal R(m,\theta,u,v)
&=
\int_\T m\bigl[H_0(x,Dv)-H_0(x,Du)-D_pH_0(x,Du)\cdot(Dv-Du)\bigr]\,dx
\\
&\quad
+\int_\T \theta\bigl[H_0(x,Du)-H_0(x,Dv)-D_pH_0(x,Dv)\cdot(Du-Dv)\bigr]\,dx.
\end{aligned}
\]
By  Assumption~\ref{ass:base}, each bracket is the Bregman divergence of the convex function 
\(p\mapsto H_0(x,p)\), evaluated in opposite directions.
Thus, the bracketed integrands are nonnegative because $m,\theta\ge0$ a.e., so $\mathcal R\ge0$. 
E
By Assumption~\ref{ass:unifmono},
\[
\int_\T (g(m)-g(\theta))(m-\theta)\,dx
\ge
c_g\|m-\theta\|_{L^{\bbar}}^{\bbar}.
\]
Using~\eqref{eq:p_lap_mono} with \(r=\gbar\), the two \(\eps\)-terms are
bounded below by
\[
2^{2-\gbar}\eps\|Du-Dv\|_{L^{\gbar}}^{\gbar}
\qquad\text{and}\qquad
2^{2-\gbar}\eps\|u-v\|_{L^{\gbar}}^{\gbar}.
\]
Combining these bounds and using
\(a^{\gbar}+b^{\gbar}\ge 2^{1-\gbar}(a+b)^{\gbar}\), we obtain
\eqref{eq:unif_mono} with \(c=\min\{c_g,\,2^{3-2\gbar}\}\).
In particular, the same constant \(c\) works for all \(\eps\in(0,1]\).
\end{proof}

\subsection{Existence, uniqueness, and the limit \texorpdfstring{$\eps\to0$}{epsilon -> 0}}\label{sec:existence}

We now recall the well-posedness of \(\VI(\mathcal A_\eps,K_+)\) and the
vanishing-regularization limit. Existence, the uniform a priori bound, and the
identification of the limit are imported from
\cite{ferreiraSolvingMeanFieldGames2025}; uniqueness follows from the
quantitative monotonicity estimate \eqref{eq:unif_mono}.

\begin{theorem}[Well-posedness and vanishing-regularization limit]\label{thm:exist_limit}
Suppose Assumptions~\ref{ass:base} and~\ref{ass:unifmono} hold.
\begin{enumerate}[label=\textup{(\arabic*)}]
\item For every $\eps>0$, there exists a unique $z_\eps=(m_\eps,u_\eps)\in K_+$ such that
\begin{equation}\label{eq:VI_eps_strong}
\pair{\mathcal A_\eps(z_\eps)}{z-z_\eps}\ge0
\qquad
\text{for all }z\in K_+,
\end{equation}
and
\begin{equation}\label{eq:apriori_bound}
\|m_\eps\|_{L^{\bbar}} + \|u_\eps\|_{W^{1,\gbar}} \le C
\end{equation}
for a constant $C>0$ independent of $\eps$.
\item Let $\eps_k\downarrow0$. Up to a subsequence, $z_{\eps_k}\rightharpoonup z=(m,u)$ weakly in $X$, with $z\in K_+$, and $(m,u)$ is a strong solution of the stationary MFG system in the sense that
\begin{equation}\label{eq:mfg_original}
\begin{cases}
u+H_0(x,Du)\le g(m)+V(x) &\text{a.e. in }\T,\\[4pt]
u+H_0(x,Du)=g(m)+V(x) &\text{a.e. on }\{m>0\},\\[4pt]
-\diver\!\bigl(mD_pH_0(x,Du)\bigr)+m=1 &\text{in }\mathcal D'(\T).
\end{cases}
\end{equation}
\end{enumerate}
\end{theorem}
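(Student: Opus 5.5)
The statement has two parts. Existence, the uniform bound \eqref{eq:apriori_bound}, and the identification of weak limits are imported from \cite{ferreiraSolvingMeanFieldGames2025}. The only genuinely new ingredient is uniqueness in part (1), which I will derive from Theorem~\ref{thm:Aeps_properties}. I would therefore structure the proof as follows: (i) recall existence and the a priori bound; (ii) prove uniqueness by quantitative monotonicity; (iii) recall the passage to the limit $\eps_k\downarrow0$.

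For existence I would invoke the Banach-space variational inequality theory for monotone, hemicontinuous, coercive operators on the closed convex set $K_+$ of the reflexive space $X$, as verified in \cite{ferreiraSolvingMeanFieldGames2025}. Monotonicity also follows from \eqref{eq:unif_mono}. For coercivity, I would test against a fixed $z_0\in K_+$, for instance $z_0=(0,0)$. Using \eqref{eq:unif_mono} together with the bound on $\mathcal A_\eps(0,0)$ in $X^*$ (which involves only $H_0(x,0)$, $V$ and the constant $1$), one gets
\[
\pair{\mathcal A_\eps(z)}{z}\ge c\|m\|_{L^{\bbar}}^{\bbar}+c\eps\|u\|_{W^{1,\gbar}}^{\gbar}-C\|z\|_X,
\]
which grows superlinearly in $\|z\|_X$ since $\bbar,\gbar>1$. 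The $\eps$-independent bound \eqref{eq:apriori_bound} is more delicate, because this computation only yields $\eps$-dependent control of $u$. I would take it verbatim from \cite{ferreiraSolvingMeanFieldGames2025}, where it comes from testing the variational inequality with suitable competitors and exploiting the coercive lower bound $H_0\ge\frac1C|p|^\alpha-C$ together with $g(m)m\ge c_gm^{\bbar}$.

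Uniqueness is the step I will write in full. Suppose $z_\eps=(m_\eps,u_\eps)$ and $\tilde z_\eps=(\tilde m_\eps,\tilde u_\eps)$ both satisfy \eqref{eq:VI_eps_strong}. Testing the first inequality with $z=\tilde z_\eps$, the second with $z=z_\eps$, and adding gives
\[
\pair{\mathcal A_\eps(z_\eps)-\mathcal A_\eps(\tilde z_\eps)}{z_\eps-\tilde z_\eps}\le0 .
\]
By \eqref{eq:unif_mono},
\[
c\|m_\eps-\tilde m_\eps\|_{L^{\bbar}}^{\bbar}+c\eps\|u_\eps-\tilde u_\eps\|_{W^{1,\gbar}}^{\gbar}\le0 ,
\]
so $z_\eps=\tilde z_\eps$ because $\eps>0$. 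Note that uniqueness of $u$ genuinely needs $\eps>0$, while uniqueness of $m$ does not.

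For part (2), the bound \eqref{eq:apriori_bound} and reflexivity give a weakly convergent subsequence. The limit lies in $K_+$ because $K_+$ is weakly closed. The identification of the limit as a strong solution of \eqref{eq:mfg_original} is the main obstacle: it requires a Minty-type argument exploiting monotonicity of $\mathcal A_0$, and upgrading the Hamilton--Jacobi inequality to hold a.e., with equality on $\{m>0\}$, needs the careful analysis of \cite{ferreiraSolvingMeanFieldGames2025}. Concretely, one tests with $m+\mu$ for $\mu\ge0$ to get the inequality, and with perturbations supported on $\{m>0\}$ to get the equality. I would cite that result rather than reprove it, noting that the regularizing terms vanish in the limit because $\eps_k\|u_{\eps_k}\|_{W^{1,\gbar}}^{\gbar-1}\to0$ by \eqref{eq:apriori_bound}.
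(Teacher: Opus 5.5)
Your proposal is correct and follows essentially the same route as the paper: existence, the $\eps$-independent bound \eqref{eq:apriori_bound}, and the identification of the limit are imported from \cite{ferreiraSolvingMeanFieldGames2025}, and uniqueness is proved exactly as you do, by cross-testing the two variational inequalities and applying \eqref{eq:unif_mono}. The one step the paper carries out that you only gesture at (``as verified in'') is checking that $H(x,p,m)=H_0(x,p)-g(m)$ satisfies the hypotheses of the cited theory, so that the cited operator coincides with $\mathcal A_\eps$; this consists of the power-growth bounds, which use $g(m)\ge c_g m^{\beta}$ from \eqref{eq:g_mono} with $\theta=0$, and the Lasry--Lions monotonicity condition, which splits into $(g(m_1)-g(m_2))(m_1-m_2)$ plus two nonnegative Bregman defects of $H_0$.
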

\begin{remark}
Here, “strong solution” is understood in the sense of 
\cite{ferreiraSolvingMeanFieldGames2025}; explicitly, it satisfies 
\eqref{eq:mfg_original}.
\end{remark}
\begin{proof}
We use the notation and assumptions of \cite[Assumptions~2.2 and~2.4, 
Proposition~3.3, Theorem~3.5, and Theorem~1.4]{ferreiraSolvingMeanFieldGames2025}.
The separable
Hamiltonian
\[
H(x,p,m)=H_0(x,p)-g(m)
\]
satisfies the hypotheses of the power-growth theory in
\cite[Assumptions~2.2 and~2.4]{ferreiraSolvingMeanFieldGames2025}.
First, we verify the hypotheses of the cited existence theorem.
Indeed, the standing measurability and continuity assumptions on \(H_0\) and
\(g\) give the regularity conditions \((\mathrm H1)\)--\((\mathrm H3)\) used
there. Moreover, for \(m>0\), Assumption~\ref{ass:unifmono} with
\(\theta=0\) gives
\[
g(m)\ge c_g m^{\bar\beta-1}=c_g m^\beta,
\]
while Assumption~\ref{ass:base} gives
\[
|g(m)|\le C(1+m^\beta).
\]
Consequently,
\[
H(x,0,m)\le C-c_gm^\beta,
\]
\[
|D_pH(x,p,m)|=|D_pH_0(x,p)|
\le C(1+|p|^{\alpha-1})
\le C(1+|p|^{\alpha-1}+m^{\beta-\beta/\alpha}),
\]
and
\[
H(x,p,m)
=
H_0(x,p)-g(m)
\ge
\frac1C|p|^\alpha-C(1+m^\beta).
\]
Thus the power-growth bounds in
\cite[Assumption~2.4]{ferreiraSolvingMeanFieldGames2025} hold.

Next, we verify the monotonicity condition, 
\cite[Assumption~2.2]{ferreiraSolvingMeanFieldGames2025}. For
\((p_i,m_i)\in\mathbb R^d\times\mathbb R_+\), \(i=1,2\), we have
\[
\begin{aligned}
&\bigl(-H(x,p_1,m_1)+H(x,p_2,m_2)\bigr)(m_1-m_2)
\\
&\quad+
\bigl(m_1D_pH(x,p_1,m_1)-m_2D_pH(x,p_2,m_2)\bigr)\cdot(p_1-p_2)
\\
&=
(g(m_1)-g(m_2))(m_1-m_2)
\\
&\quad+
m_1\bigl[
H_0(x,p_2)-H_0(x,p_1)
-D_pH_0(x,p_1)\cdot(p_2-p_1)
\bigr]
\\
&\quad+
m_2\bigl[
H_0(x,p_1)-H_0(x,p_2)
-D_pH_0(x,p_2)\cdot(p_1-p_2)
\bigr].
\end{aligned}
\]
The first term is nonnegative by the monotonicity of \(g\), and the last two
terms are nonnegative by the convexity of \(p\mapsto H_0(x,p)\). Hence the
monotonicity condition holds.

With this identification, the operator in
\cite[(3.1)]{ferreiraSolvingMeanFieldGames2025} coincides with the operator
\(\mathcal A_\epsi\) defined above. Therefore, for each fixed
\(\epsi>0\), existence of a regularized solution follows from
\cite[Proposition~3.3]{ferreiraSolvingMeanFieldGames2025}. The uniform
\(\epsi\)-independent estimate follows from
\cite[Theorem~3.5]{ferreiraSolvingMeanFieldGames2025}. Finally, the
vanishing-regularization passage and the identification of the limiting strong
solution follow from the proof of
\cite[Theorem~1.4]{ferreiraSolvingMeanFieldGames2025}, together with the
strong-solution property in
\cite[Lemma~3.4]{ferreiraSolvingMeanFieldGames2025}.
For the limiting unregularized solution \(z=(m,u)\), testing the transport
equation in \eqref{eq:mfg_original} with the constant function
\(\varphi\equiv1\) gives
\[
\int_{\T}m\,dx=1.
\]
This mass identity is not imposed in \(K_+\), and it need not hold for the
regularized solution \(z_\epsi\) at fixed \(\epsi>0\).

It remains to prove uniqueness for fixed $\eps>0$. Suppose $z_\eps^1,z_\eps^2\in K_+$ both satisfy~\eqref{eq:VI_eps_strong}. Testing each inequality against the other solution and adding yields
\[
\pair{\mathcal A_\eps(z_\eps^1)-\mathcal A_\eps(z_\eps^2)}{z_\eps^1-z_\eps^2}\le0,
\]
and Theorem~\ref{thm:Aeps_properties} forces $c\|m_\eps^1-m_\eps^2\|_{L^{\bbar}}^{\bbar}+c\eps\|u_\eps^1-u_\eps^2\|_{W^{1,\gbar}}^{\gbar}\le0$, so $z_\eps^1=z_\eps^2$.
\end{proof}

This argument gives uniqueness only for fixed \(\eps>0\); uniqueness of 
unregularized solutions is not asserted here.

\subsection{Strong convergence in the vanishing-regularization limit}\label{sec:strong_convergence}

Theorem~\ref{thm:exist_limit} identifies weak limits of the regularized
solutions as strong solutions of the original stationary MFG system. The next
result refines this limit passage. Although it is not used in the fixed-\(\eps\)
convergence analysis of the mirror method, it shows that the regularization is
compatible with the natural density norm: along every weakly convergent
vanishing-regularization sequence, the densities converge strongly in
\(L^{\bbar}(\T)\), and the Hamiltonian Bregman defect for the gradients,
weighted by the limiting density, vanishes.

\begin{theorem}[Strong density convergence and vanishing Hamiltonian defect]
\label{thm:strong_limit}
Assume Assumptions~\ref{ass:base} and~\ref{ass:unifmono}. 
Let \(\eps_k\downarrow0\), and let
\(z_{\eps_k}=(m_{\eps_k},u_{\eps_k})\) be a subsequence of regularized
solutions such that
\[
z_{\eps_k}\rightharpoonup z=(m,u)
\qquad\text{weakly in }X,
\]
as provided by Theorem~\ref{thm:exist_limit}.
Then:
\begin{enumerate}[label=\textup{(\arabic*)}]
\item \(m_{\eps_k}\to m\) strongly in \(L^{\bbar}(\T)\).
\item The gradient of the value function satisfies
\begin{equation}\label{eq:grad_conv}
\lim_{k\to\infty} \int_\T m\bigl[H_0(x,Du_{\eps_k})-H_0(x,Du)-D_pH_0(x,Du)\cdot(Du_{\eps_k}-Du)\bigr]\,dx=0.
\end{equation}
\end{enumerate}
Moreover, assume that \(H_0\) is locally uniformly convex in the momentum
variable, uniformly in \(x\); that is, for every \(M>0\) and \(\delta>0\) there
exists \(\omega_{M,\delta}>0\) such that
\[
H_0(x,q)-H_0(x,p)-D_pH_0(x,p)\cdot(q-p)\ge \omega_{M,\delta}
\]
for a.e.\ \(x\in\T\) and all \(p,q\in\RR^d\) with \(|p|,|q|\le M\) and
\(|p-q|\ge\delta\). Then \(Du_{\eps_k}\to Du\) in measure on \(\{m>0\}\).
\end{theorem}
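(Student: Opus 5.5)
The plan is a Minty-type comparison: test the regularized variational inequality with the weak limit, then use the quantitative monotonicity of Theorem~\ref{thm:Aeps_properties} together with weak convergence. Write \(z_k:=z_{\eps_k}=(m_k,u_k)\). Since \(z=(m,u)\in K_+\) by Theorem~\ref{thm:exist_limit}, it is an admissible test function in \eqref{eq:VI_eps_strong} with \(\eps=\eps_k\). This gives \(\pair{\mathcal A_{\eps_k}(z_k)}{z_k-z}\le 0\).

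\textbf{Step 1 (monotonicity identity).} I apply the identity from the proof of Theorem~\ref{thm:Aeps_properties} to the pair \(z_k,z\). The quantity \(\pair{\mathcal A_{\eps_k}(z_k)-\mathcal A_{\eps_k}(z)}{z_k-z}\) is bounded below by the sum of three nonnegative contributions:
\begin{itemize}
\item the coupling term \(c_g\|m_k-m\|_{L^{\bbar}}^{\bbar}\);
\item the nonnegative \(\eps_k\)-regularization terms;
\item the remainder \(\mathcal R(m_k,m,u_k,u)\).
\end{itemize}
The remainder contains the nonnegative term \(\int_\T m_k[\cdots]\,dx\) and the term
\[
\int_\T m\bigl[H_0(x,Du_k)-H_0(x,Du)-D_pH_0(x,Du)\cdot(Du_k-Du)\bigr]\,dx ,
\]
which is exactly the defect in \eqref{eq:grad_conv}. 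Combining this lower bound with Step~0 gives
\[
c_g\|m_k-m\|_{L^{\bbar}}^{\bbar}+\int_\T m[\cdots]\,dx\le -\pair{\mathcal A_{\eps_k}(z)}{z_k-z}.
\]

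\textbf{Step 2 (the right-hand side vanishes).} I split \(\mathcal A_{\eps_k}(z)=\mathcal A_0(z)+\eps_k\mathcal B(u)\). Here \(\mathcal B(u)\) denotes the \(\gbar\)-Laplacian terms, which are fixed elements of \(W^{-1,\gbar'}\). By the uniform bound \eqref{eq:apriori_bound}, \(|\eps_k\pair{\mathcal B(u)}{u_k-u}|\le C\eps_k\to0\).

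For the main term, I check that \(\mathcal A_0(z)\in X^*\) using the exponent identities of Section~\ref{sec:assumptions}:
\begin{itemize}
\item the first component \(-u-H_0(x,Du)+g(m)+V\) lies in \(L^{\bbar'}\), since \(|Du|^\alpha\in L^{\bbar'}\) by \(\alpha/\gbar=1/\bbar'\) and \(g(m)\in L^{\bbar'}\) by \eqref{eq:g_growth};
\item the flux \(mD_pH_0(x,Du)\) lies in \(L^{\gbar'}\) by H\"older's inequality and \eqref{eq:DpH_growth}.
\end{itemize}
Hence \(\pair{\mathcal A_0(z)}{z_k-z}\to0\) by weak convergence \(z_k\rightharpoonup z\) in \(X\). (Alternatively, the second component of \(\mathcal A_0(z)\) vanishes by \eqref{eq:mfg_original}, but this is not needed.) The right-hand side therefore tends to zero, and since both terms on the left are nonnegative, we obtain (1) and (2).

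This step is the main point requiring care. One must genuinely treat \(\mathcal A_0(z)\) as a fixed bounded linear functional on \(X\), so that only weak convergence of \(z_k\) is used and no nonlinear quantity in \(z_k\) has to pass to the limit.

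\textbf{Step 3 (convergence in measure under local uniform convexity).} Fix \(\delta,\eta,M>0\), and set
\[
E_k=\{m\ge\eta,\ |Du|\le M,\ |Du_k|\le M,\ |Du_k-Du|\ge\delta\}.
\]
On \(E_k\) the Bregman integrand is at least \(\omega_{M,\delta}\). Since the integrand is nonnegative everywhere, this gives
\[
|E_k|\le \frac{1}{\eta\,\omega_{M,\delta}}\int_\T m[\cdots]\,dx\to0.
\]
The complementary pieces of \(\{m>0,\ |Du_k-Du|\ge\delta\}\) are controlled as follows:
\begin{itemize}
\item \(\{0<m<\eta\}\) has small measure as \(\eta\to0\);
\item \(\{|Du|>M\}\) has measure at most \(M^{-\gbar}\|Du\|_{L^{\gbar}}^{\gbar}\);
\item \(\{|Du_k|>M\}\) has measure at most \(CM^{-\gbar}\) uniformly in \(k\), by Chebyshev's inequality and \eqref{eq:apriori_bound}.
\end{itemize}
Letting \(k\to\infty\), then \(M\to\infty\), then \(\eta\to0\), yields \(|\{m>0,\ |Du_k-Du|\ge\delta\}|\to0\).
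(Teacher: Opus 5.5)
Your proof is correct. Its skeleton matches the paper's: you test the regularized inequality with $z$, expand via the identity behind Theorem~\ref{thm:Aeps_properties}, and use the same $\eta$--$M$ decomposition for convergence in measure. The difference is how you dispose of the term $\pair{\mathcal A_0(z)}{z_{\eps_k}-z}$.

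The paper first shows that the strong-solution property \eqref{eq:mfg_original} of the limit implies that $z$ solves $\VI(\mathcal A_0,K_+)$. Hence this pairing is $\ge0$, and the paper adds it to the regularized inequality. You instead note that $\mathcal A_0(z)$ is a fixed element of $X^*$, so the pairing tends to zero by weak convergence alone.

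Your route is more economical and slightly more general. It is the standard Minty/$(S_+)$-type argument: it uses only $z\in K_+$ and $z_{\eps_k}\rightharpoonup z$, and never the identification of $z$ as a solution of the limit system.

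The paper's signed inequality buys a quantitative statement in return. The only leftover term is $\eps_k\pair{\mathcal J(z)}{z_{\eps_k}-z}$ (your $\eps_k\mathcal B(u)$ term), which is at most $C\eps_k$ by \eqref{eq:apriori_bound} alone. So that route actually yields $c\|m_{\eps_k}-m\|_{L^{\bbar}}^{\bbar}+\mathcal R(m_{\eps_k},m,u_{\eps_k},u)\le C\eps_k$. This holds along the whole family $\eps\downarrow0$ and for any solution $z$ of $\VI(\mathcal A_0,K_+)$. Your weak-convergence step, by contrast, gives convergence without a rate.
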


\begin{proof}
Let \(w=(w_m,w_u)\in K_+\). Set
\[
F_1:=-u-H_0(x,Du)+g(m)+V,
\qquad
F_2:=-\diver\bigl(mD_pH_0(x,Du)\bigr)+m-1.
\]
Since \(z=(m,u)\) is a strong solution of \eqref{eq:mfg_original}, we have
\[
F_1\ge0 \quad\text{a.e. in }\T,
\qquad
F_1=0 \quad\text{a.e. on }\{m>0\},
\]
and
\[
F_2=0 \quad\text{in }\mathcal D'(\T).
\]
Because \(F_2\in W^{-1,\gbar'}(\T)\), this distributional equality identifies
\(F_2\) with the zero element of \(W^{-1,\gbar'}(\T)\). Hence
\[
\pair{F_2}{w_u-u}_{W^{-1,\gbar'},W^{1,\gbar}}=0.
\]
Therefore
\[
\pair{\mathcal A_0(z)}{w-z}
=
\int_\T F_1(w_m-m)\,dx.
\]
Moreover, \(F_1m=0\) a.e. Indeed, on \(\{m>0\}\) this follows from the
equality condition in the Hamilton--Jacobi equation, while on \(\{m=0\}\)
it is trivial.
Since \(w_m\ge0\) and \(F_1\ge0\), we obtain
\[
\int_\T F_1(w_m-m)\,dx
=
\int_\T F_1w_m\,dx-\int_\T F_1m\,dx
=
\int_\T F_1w_m\,dx
\ge0.
\]
Thus \(z\) solves the unregularized variational inequality
\[
\pair{\mathcal A_0(z)}{w-z}\ge0
\qquad\text{for all }w\in K_+.
\]
Hence the strong PDE formulation implies the variational inequality 
formulation for \(\mathcal A_0\) on \(K_+\).
Taking \(w=z_{\eps_k}\in K_+\) gives
\[
\pair{\mathcal A_0(z)}{z_{\eps_k}-z}\ge0.
\]

Because \(z_{\eps_k}\) solves \(\VI(\mathcal A_{\eps_k},K_+)\), taking \(w=z\) yields \(\pair{\mathcal A_{\eps_k}(z_{\eps_k})}{z-z_{\eps_k}}\ge0\).
Adding these inequalities, we obtain
\[
\pair{\mathcal A_{\eps_k}(z_{\eps_k})-\mathcal A_0(z)}{z_{\eps_k}-z}\le 0.
\]
We decompose \(\mathcal A_{\eps_k}(z)=\mathcal A_0(z)+\eps_k \mathcal J(z)\), 
where \(\mathcal J(z)\in X^*\) collects the \(\gbar\)-Laplacian regularization terms in \eqref{eq:operator_duality} and is defined by
\(\pair{\mathcal J(z)}{(\mu,v)}=\int_\T |Du|^{\gbar-2}Du\cdot Dv\,dx+\int_\T |u|^{\gbar-2}u\,v\,dx\).
Substituting this decomposition into the previous inequality gives
\[
\pair{\mathcal A_{\eps_k}(z_{\eps_k})-\mathcal A_{\eps_k}(z)}{z_{\eps_k}-z}
\le
-\eps_k\pair{\mathcal J(z)}{z_{\eps_k}-z}.
\]
By the monotonicity of \(\mathcal A_{\eps_k}\), the left-hand side is
nonnegative. Hence
\[
0\le
\pair{\mathcal A_{\eps_k}(z_{\eps_k})-\mathcal A_{\eps_k}(z)}
{z_{\eps_k}-z}
\le
\eps_k
\left|
\pair{\mathcal J(z)}{z_{\eps_k}-z}
\right|.
\]
Because \(z_{\eps_k}\rightharpoonup z\) weakly in \(X\) and \(\mathcal J(z)\in X^*\) is a fixed element of the dual space, the right-hand side tends to zero.

Expanding the term \(\pair{\mathcal A_{\eps_k}(z_{\eps_k})-\mathcal A_{\eps_k}(z)}{z_{\eps_k}-z}\) exactly as in the proof of Theorem~\ref{thm:Aeps_properties}, and dropping the non-negative \(\eps_k\)-regularization terms for \(u\), we obtain
\[
c\|m_{\eps_k}-m\|_{L^{\bbar}}^{\bbar}
+
\mathcal R(m_{\eps_k},m,u_{\eps_k},u)
\le
\eps_k
\left|
\pair{\mathcal J(z)}{z_{\eps_k}-z}
\right|,
\]
where \(c>0\) is the constant from \eqref{eq:unif_mono} and \(\mathcal R\ge 0\) is the sum of the non-negative Hamiltonian cross-terms:
\[
\begin{aligned}
\mathcal R(m_{\eps_k},m,u_{\eps_k},u)
&=
\int_\T m_{\eps_k}\bigl[H_0(x,Du)-H_0(x,Du_{\eps_k})-D_pH_0(x,Du_{\eps_k})\cdot(Du-Du_{\eps_k})\bigr]\,dx
\\
&\quad
+\int_\T m\bigl[H_0(x,Du_{\eps_k})-H_0(x,Du)-D_pH_0(x,Du)\cdot(Du_{\eps_k}-Du)\bigr]\,dx.
\end{aligned}
\]
Since the right-hand side tends to zero and both terms on the left-hand side are nonnegative, each converges to zero. The first gives \(m_{\eps_k}\to m\) strongly in \(L^{\bbar}(\T)\). The second gives \(\mathcal R\to0\); as \(\mathcal R\) is a sum of two nonnegative integrals (by the convexity of \(H_0\)), each integral converges to zero, which proves \eqref{eq:grad_conv}.

Assume now the stated local uniform convexity condition. 
Set \(p=Du\), \(q_k=Du_{\eps_k}\), and denote the corresponding Hamiltonian 
Bregman defect by
\[
B_k(x)
:=
H_0(x,q_k)-H_0(x,p)-D_pH_0(x,p)\cdot(q_k-p).
\]
By the convexity of \(p\mapsto H_0(x,p)\), we have \(B_k\ge0\) a.e.
By \eqref{eq:grad_conv},
\[
\int_\T m B_k\,dx\to0.
\]
Fix \(\delta>0\). For \(\eta>0\) and \(M>0\), define
\[
E_{k,\eta,M}
:=
\{m\ge\eta,\ |p|\le M,\ |q_k|\le M,\ |q_k-p|\ge \delta\}.
\]
On \(E_{k,\eta,M}\), the local uniform convexity assumption gives
\(B_k\ge\omega_{M,\delta}\), and hence
\[
\eta\omega_{M,\delta}|E_{k,\eta,M}|
\le
\int_{E_{k,\eta,M}} m B_k\,dx
\le
\int_\T m B_k\,dx
\to0.
\]
Therefore \(|E_{k,\eta,M}|\to0\). For fixed \(\delta>0\), we have
\[
\{m>0,\ |q_k-p|\ge\delta\}
\subset
E_{k,\eta,M}
\cup
\{0<m<\eta\}
\cup
\{|p|>M\}
\cup
\{|q_k|>M\}.
\]
Therefore
\[
\begin{aligned}
\limsup_{k\to\infty}
|\{m>0,\ |q_k-p|\ge\delta\}|
&\le
|\{0<m<\eta\}|
+
|\{|p|>M\}|
\\
&\quad+
\limsup_{k\to\infty}|\{|q_k|>M\}|.
\end{aligned}
\]

By the uniform a priori estimate \eqref{eq:apriori_bound}, the sequence
\((q_k)=(Du_{\eps_k})\) is bounded in \(L^{\gbar}(\T;\RR^d)\).
Since also \(p\in L^{\gbar}(\T;\RR^d)\), we have
\[
\lim_{M\to\infty}
\left(
|\{|p|>M\}|+\sup_k|\{|q_k|>M\}|
\right)=0.
\]
Moreover, because \(m>0\) on \(\{m>0\}\),
\[
|\{0<m<\eta\}|\to0
\qquad\text{as }\eta\downarrow0.
\]
Letting first \(M\to\infty\) and then \(\eta\downarrow0\), we obtain
\[
|\{m>0,\ |q_k-p|\ge \delta\}|\to0,
\]
which proves that \(Du_{\eps_k}\to Du\) in measure on \(\{m>0\}\).
\end{proof}

\subsection{The bounded admissible set for the algorithm}

We now turn back to the fixed-\(\eps\) problem, which is the setting for
the mirror algorithm. The convergence proof below is carried out on a bounded
admissible subset of \(K_+\).
We choose \(R\) using the \(\eps\)-independent a priori bound so that the same 
set \(K\) contains \(z_\eps\) for all \(\eps>0\).
 Let \(M>0\) be such that
\[
\|z_\eps\|_X\le M
\qquad
\text{for all }\eps>0,
\]
as guaranteed by \eqref{eq:apriori_bound}. Given an initial point
\(x_0\in K_+\), choose
\begin{equation}\label{eq:bounded_radius}
R>\max\{M,\|x_0\|_X\},
\end{equation}
and set
\begin{equation}\label{eq:bounded_K}
K:=K_+\cap\overline B_X(0,R).
\end{equation}
Thus \(x_0\in K\), and \(z_\eps\in K\) for every \(\eps>0\).

\begin{lemma}[Bounded admissible set]\label{lem:bounded_admissible_set}
The set \(K\) defined by \eqref{eq:bounded_K} is nonempty, convex,
norm-closed, bounded, and weakly compact. Moreover, for every \(\eps>0\),
the point \(z_\eps\) is the unique solution of
\(\VI(\mathcal A_\eps,K)\).
\end{lemma}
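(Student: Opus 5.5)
The plan is to verify the topological properties of \(K\) directly, and then to transfer the variational inequality from \(K_+\) to \(K\) using the inclusion \(K\subset K_+\) for existence and the quantitative monotonicity of Theorem~\ref{thm:Aeps_properties} for uniqueness.

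\emph{Topological properties.} Nonemptiness follows from \(x_0\in K\), which holds by \eqref{eq:bounded_radius}; alternatively, \(0\in K\). Convexity holds because \(K\) is the intersection of two convex sets: \(K_+\) is convex since nonnegativity is preserved under convex combinations, and the closed ball is convex. For norm-closedness, \(K_+\) is closed because \(L^{\bbar}\)-convergence gives an a.e.\ convergent subsequence, so \(m\ge0\) a.e.\ passes to the limit; the closed ball is closed as well. Boundedness is immediate. For weak compactness, \(K\) is closed and convex, hence weakly closed by Mazur's theorem. It is also bounded in the reflexive space \(X\), so the Kakutani/Eberlein--\v{S}mulian theorem gives weak compactness.

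\emph{Existence on \(K\).} By Theorem~\ref{thm:exist_limit}, \(z_\eps\in K_+\) solves \eqref{eq:VI_eps_strong} for all \(z\in K_+\). By the choice \(R>M\), we have \(z_\eps\in K\). Since \(K\subset K_+\), the inequality \(\pair{\mathcal A_\eps(z_\eps)}{z-z_\eps}\ge0\) holds in particular for every \(z\in K\). Hence \(z_\eps\) solves \(\VI(\mathcal A_\eps,K)\).

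\emph{Uniqueness on \(K\).} This is the only step with any content. Let \(w\in K\) be another solution of \(\VI(\mathcal A_\eps,K)\). Test the inequality for \(w\) with \(z_\eps\in K\), and the inequality for \(z_\eps\) with \(w\in K\), then add the two. This gives
\[
\pair{\mathcal A_\eps(w)-\mathcal A_\eps(z_\eps)}{w-z_\eps}\le 0 .
\]
Since \(w,z_\eps\in K_+\), Theorem~\ref{thm:Aeps_properties} bounds the left-hand side from below by \(c\|m_w-m_\eps\|_{L^{\bbar}}^{\bbar}+c\eps\|u_w-u_\eps\|_{W^{1,\gbar}}^{\gbar}\). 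With \(\eps>0\) fixed, this forces \(w=z_\eps\).

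No real obstacle is expected. The one point to keep in mind is that uniqueness on the smaller set \(K\) does not follow from uniqueness on \(K_+\), because a solution on \(K\) need not a priori solve the problem on \(K_+\). This is why the monotonicity argument is rerun on \(K\). For that argument to work, it matters that \(z_\eps\) itself lies in \(K\), which is exactly what the choice of \(R\) guarantees.
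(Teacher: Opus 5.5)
Your proposal is correct and follows the paper's proof essentially step by step. The topological properties come from the construction plus reflexivity; you apply Mazur directly to \(K\), while the paper intersects the weakly closed \(K_+\) with the weakly compact ball. Existence comes from restricting the \(K_+\)-inequality to \(K\ni z_\eps\), and uniqueness from testing the two inequalities against each other and applying \eqref{eq:unif_mono}, which is exactly why the paper reruns the monotonicity argument on \(K\).
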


\begin{proof}
The set \(K\) is nonempty because \(x_0\in K_+\) and
\(\|x_0\|_X<R\), hence \(x_0\in K\). It is convex, norm-closed, and
bounded by construction. Since \(K_+\) is weakly closed and \(X\) is reflexive,
\(K\), as a weakly closed subset of the weakly compact ball
\(\overline B_X(0,R)\), is weakly compact.

Because \(K\subset K_+\) and \(z_\eps\) solves
\(\VI(\mathcal A_\eps,K_+)\), we have
\[
\pair{\mathcal A_\eps(z_\eps)}{z-z_\eps}\ge0
\qquad
\text{for all }z\in K.
\]
Thus \(z_\eps\) solves \(\VI(\mathcal A_\eps,K)\).

Let \(w\in K\) be another solution of \(\VI(\mathcal A_\eps,K)\). Testing the
inequality for \(z_\eps\) with \(w\), and the inequality for \(w\) with
\(z_\eps\), gives
\[
\pair{\mathcal A_\eps(w)-\mathcal A_\eps(z_\eps)}{w-z_\eps}\le0.
\]
On the other hand, \eqref{eq:unif_mono} implies
\[
\pair{\mathcal A_\eps(w)-\mathcal A_\eps(z_\eps)}{w-z_\eps}
\ge
c\|w_m-m_\eps\|_{L^{\bbar}}^{\bbar}
+
c\eps\|w_u-u_\eps\|_{W^{1,\gbar}}^{\gbar},
\]
where \(w=(w_m,w_u)\). Hence \(w=z_\eps\). Therefore \(z_\eps\) is the unique
solution of \(\VI(\mathcal A_\eps,K)\).
\end{proof}

From this point on, \(K\) denotes the bounded admissible set in
\eqref{eq:bounded_K}, while \(K_+\) denotes the original unbounded nonnegative-density constraint
set.
The bounded truncation is used in the infinite-dimensional convergence proof to
obtain compactness for the mirror projection and to work with bounded-set
estimates, including the local Lipschitz and Bregman comparison bounds. In the
finite-dimensional experiments of Section~\ref{sec:numerics}, the analogous
truncation is monitored and remains inactive.

\subsection{Local Lipschitz continuity}\label{sec:lipschitz}

The convergence analysis of the two-step mirror method in
Section~\ref{sec:convergence} requires one property not used in the existence
theory: a Lipschitz estimate for \(\mathcal A_\eps\) on the bounded admissible
set \(K\) fixed in \eqref{eq:bounded_K}. Since \(K\subset K_+\), the density
variable remains nonnegative, and the coupling \(g\) is evaluated only on
\(\Rr_+\).

We use the following standard pointwise estimate: for every \(r\ge2\)
there exists \(C_r>0\) such that
\begin{equation}\label{eq:Jp_lip_pointwise}
\bigl||a|^{r-2}a-|b|^{r-2}b\bigr|
\le
C_r\bigl(|a|^{r-2}+|b|^{r-2}\bigr)|a-b|
\end{equation}
for all \(a,b\in\RR^N\), with \(N=1\) in the scalar case.

\begin{theorem}[Local Lipschitz continuity]\label{thm:lip}
Under Assumptions~\ref{ass:base} and~\ref{ass:alg}, let \(R>0\) and set
\(K:=K_+\cap\overline B_X(0,R)\). For every \(\eps>0\), there exists
\(L_{R,\eps}>0\) such that, for all \((m,u),(\theta,v)\in K\), we have
\begin{equation}\label{eq:local_lip_Aeps}
\|\mathcal A_\eps(m,u)-\mathcal A_\eps(\theta,v)\|_{X^*}
\le
L_{R,\eps}
\left(
\|m-\theta\|_{L^{\bbar}}
+
\|u-v\|_{W^{1,\gbar}}
\right).
\end{equation}
\end{theorem}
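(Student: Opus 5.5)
The plan is to estimate the dual norm by duality: for a test pair \((\mu,w)\in X\) with \(\|\mu\|_{L^{\bbar}}+\|w\|_{W^{1,\gbar}}\le1\), I will bound \(\pair{\mathcal A_\eps(m,u)-\mathcal A_\eps(\theta,v)}{(\mu,w)}\) term by term using the representation \eqref{eq:operator_duality}, and then take the supremum. The difference splits into five groups:
\begin{itemize}
\item the zero-order linear terms \(-(u-v)\mu\) and \((m-\theta)w\);
\item the Hamiltonian difference \(-(H_0(x,Du)-H_0(x,Dv))\mu\);
\item the coupling difference \((g(m)-g(\theta))\mu\);
\item the transport difference \((mD_pH_0(x,Du)-\theta D_pH_0(x,Dv))\cdot Dw\);
\item the two \(\eps\)-regularization differences.
\end{itemize}
Throughout, \(\|m\|_{L^{\bbar}},\|\theta\|_{L^{\bbar}},\|Du\|_{L^{\gbar}},\|Dv\|_{L^{\gbar}}\le R\), and the exponent identities \(\alpha/\gbar=1/\bbar'\) and \((\alpha-1)/\gbar+1/\bbar=1/\gbar'\) dictate the H\"older splittings.

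The linear terms are handled by H\"older and the embedding \(W^{1,\gbar}\hookrightarrow L^{\bbar'}\), which holds because \(\gbar>2>\bbar'\) on the torus, together with \(L^{\bbar}\subset L^{1}\) and \(\|w\|_{L^\infty}\)-free estimates of the form \(\int |m-\theta||w|\le\|m-\theta\|_{L^{\bbar}}\|w\|_{L^{\bbar'}}\).

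For the Hamiltonian term, I write \(H_0(x,Du)-H_0(x,Dv)=\int_0^1 D_pH_0(x,Dv+t(Du-Dv))\,dt\cdot(Du-Dv)\). Then \eqref{eq:DpH_growth} gives a pointwise bound \(C(1+|Du|^{\alpha-1}+|Dv|^{\alpha-1})|Du-Dv|\). H\"older with exponents \(\gbar/(\alpha-1)\), \(\gbar\) and \(\bbar\) closes this, since \((\alpha-1)/\gbar+1/\gbar+1/\bbar=\alpha/\gbar+1/\bbar=1/\bbar'+1/\bbar=1\). Note this step uses only (A3).

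For the coupling, \eqref{eq:g_lip} gives \(C(1+m^{\bbar-2}+\theta^{\bbar-2})|m-\theta||\mu|\). Using H\"older with \((\bbar-2)/\bbar+1/\bbar+1/\bbar=1\) bounds this by \(C(1+R^{\bbar-2})\|m-\theta\|_{L^{\bbar}}\).

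For the \(\eps\)-terms, I apply \eqref{eq:Jp_lip_pointwise} with \(r=\gbar\) and H\"older with \((\gbar-2)/\gbar+1/\gbar+1/\gbar=1\). This gives \(\eps C R^{\gbar-2}\|u-v\|_{W^{1,\gbar}}\); this is the only place where \(\eps\) enters \(L_{R,\eps}\).

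The main obstacle is the transport term. I split it as
\[
(m-\theta)D_pH_0(x,Du)+\theta\bigl(D_pH_0(x,Du)-D_pH_0(x,Dv)\bigr).
\]
The first piece is bounded by \(C\int|m-\theta|(1+|Du|^{\alpha-1})|Dw|\). H\"older with \(1/\bbar+(\alpha-1)/\gbar+1/\gbar=1\) yields \(C(1+R^{\alpha-1})\|m-\theta\|_{L^{\bbar}}\).

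The second piece, by \eqref{eq:DpH_lip}, is bounded by \(C\int\theta(1+|Du|^{\alpha-2}+|Dv|^{\alpha-2})|Du-Dv||Dw|\). The exponents to check are \(1/\bbar+(\alpha-2)/\gbar+1/\gbar+1/\gbar=1/\bbar+\alpha/\gbar=1\), so H\"older with exponents \(\bbar,\ \gbar/(\alpha-2),\ \gbar,\ \gbar\) closes it. The bound is \(CR(1+R^{\alpha-2})\|u-v\|_{W^{1,\gbar}}\). The constant term \(1\) is handled since \(L^{\gbar/(\alpha-2)}\) norms of constants on the torus are finite, and when \(\alpha=2\) that factor is simply absent.

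This exponent bookkeeping, which needs the critical identity to balance exactly, is where I expect care to be needed. In particular one must confirm that the density weight \(\theta\) is only in \(L^{\bbar}\), so no slack is available. The construction also shows why the bound is local: it requires the \(R\)-bounds on \(\theta\) and the gradients.

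Summing all contributions and taking the supremum over \((\mu,w)\) gives \eqref{eq:local_lip_Aeps} with \(L_{R,\eps}=C(1+R^{\max\{\alpha-1,\bbar-2\}}+R(1+R^{\alpha-2}))+C\eps R^{\gbar-2}\).
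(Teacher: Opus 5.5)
Your proposal is correct and follows essentially the paper's proof: the same splitting of the transport flux into \((m-\theta)D_pH_0(x,Du)+\theta\bigl(D_pH_0(x,Du)-D_pH_0(x,Dv)\bigr)\), the same H\"older exponents, and the same pointwise estimates; the only cosmetic difference is that you test against \((\mu,w)\) directly, whereas the paper bounds the components in \(L^{\bbar'}\) and \(W^{-1,\gbar'}\) via \(\|\diver G\|_{W^{-1,\gbar'}}\le\|G\|_{L^{\gbar'}}\). Two small points: since \(\bbar'=2\) when \(\beta=1\), write \(\gbar>2\ge\bbar'\) (only \(\gbar\ge\bbar'\) is needed); and the supremum over the unit ball of \(X\) gives the maximum of the component dual norms, which controls the paper's sum norm on \(X^*\) up to a factor \(2\).
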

\begin{remark}
We do not track the constants uniformly in \(\eps\), and the convergence
theorem in Section~\ref{sec:convergence} controls the \(u\)-component only
through the weighted quantity
\(\eps\|u-u_\eps\|_{W^{1,\gbar}}^{\gbar}\).
\end{remark}
\begin{proof}
Let
\[
z=(m,u),
\qquad
\zeta=(\theta,v)
\]
belong to \(K\). Since \(K\subset \overline B_X(0,R)\), we have
\[
\|z\|_X\le R,
\qquad
\|\zeta\|_X\le R.
\]
Write
\[
\mathcal A_\eps(m,u)-\mathcal A_\eps(\theta,v)
=
(F_1,F_2),
\]
where \(F_1\in L^{\bbar'}(\T)\) is the Hamilton--Jacobi component and
\(F_2\in W^{-1,\gbar'}(\T)\) is the transport component. We estimate the two
components separately.

For the first component, the potential \(V\) cancels and
\[
F_1
=
-(u-v)
-
\bigl(H_0(x,Du)-H_0(x,Dv)\bigr)
+
\bigl(g(m)-g(\theta)\bigr).
\]
Since \(\gbar\ge\bbar'\) and \(\T\) has finite measure,
\begin{equation}\label{eq:u_linear_lip}
\|u-v\|_{L^{\bbar'}}
\le
C\|u-v\|_{W^{1,\gbar}}.
\end{equation}

We next estimate the Hamiltonian difference. For a.e. \(x\in\T\), the mean-value
formula in the momentum variable and the growth bound \eqref{eq:DpH_growth} give
\[
\begin{aligned}
|H_0(x,Du)-H_0(x,Dv)|
&\le
\int_0^1
\left|
D_pH_0\bigl(x,Dv+t(Du-Dv)\bigr)
\right|
|Du-Dv|\,dt
\\
&\le
C\bigl(1+|Du|^{\alpha-1}+|Dv|^{\alpha-1}\bigr)|Du-Dv|.
\end{aligned}
\]
Using
\[
\frac{\alpha-1}{\gbar}+\frac1{\gbar}
=
\frac{\alpha}{\gbar}
=
\frac1{\bbar'},
\]
together with \(1\in L^{\gbar/(\alpha-1)}(\T)\), H\"older's inequality yields
\begin{equation}\label{eq:H_lip_Lbbarprime}
\|H_0(x,Du)-H_0(x,Dv)\|_{L^{\bbar'}}
\le
C_R\|Du-Dv\|_{L^{\gbar}}.
\end{equation}

For the coupling term, Assumption~\ref{ass:alg} gives
\[
|g(m)-g(\theta)|
\le
C\bigl(1+m^{\bbar-2}+\theta^{\bbar-2}\bigr)|m-\theta|.
\]
When \(\bbar=2\), the factors \(m^{\bbar-2}\) and \(\theta^{\bbar-2}\) are interpreted as \(1\). When
\(\bbar>2\), we use
\[
m^{\bbar-2},\,\theta^{\bbar-2}
\in
L^{\bbar/(\bbar-2)}(\T),
\qquad
\frac{\bbar-2}{\bbar}+\frac1{\bbar}
=
\frac1{\bbar'}.
\]
Thus, with the usual convention that
\(\bbar/(\bbar-2)=\infty\) when \(\bbar=2\), H\"older's inequality gives
\begin{equation}\label{eq:g_lip_Lbbarprime}
\|g(m)-g(\theta)\|_{L^{\bbar'}}
\le
C_R\|m-\theta\|_{L^{\bbar}}.
\end{equation}
Combining \eqref{eq:u_linear_lip}, \eqref{eq:H_lip_Lbbarprime}, and
\eqref{eq:g_lip_Lbbarprime}, we obtain
\begin{equation}\label{eq:F1_lip}
\|F_1\|_{L^{\bbar'}}
\le
C_R
\left(
\|m-\theta\|_{L^{\bbar}}
+
\|u-v\|_{W^{1,\gbar}}
\right).
\end{equation}

We now estimate the second component. From \eqref{eq:operator},
\[
\begin{aligned}
F_2
&=
-\diver\Bigl(
mD_pH_0(x,Du)-\theta D_pH_0(x,Dv)
\Bigr)
+
(m-\theta)
\\
&\quad
-\eps\,\diver\Bigl(
|Du|^{\gbar-2}Du-|Dv|^{\gbar-2}Dv
\Bigr)
+
\eps\Bigl(
|u|^{\gbar-2}u-|v|^{\gbar-2}v
\Bigr).
\end{aligned}
\]
We use the elementary bounds
\begin{equation}\label{eq:div_bound}
\|\diver G\|_{W^{-1,\gbar'}}
\le
\|G\|_{L^{\gbar'}}
\qquad
\text{for }G\in L^{\gbar'}(\T;\RR^d),
\end{equation}
and
\begin{equation}\label{eq:Lgprime_to_dual}
\|h\|_{W^{-1,\gbar'}}
\le
\|h\|_{L^{\gbar'}}
\qquad
\text{for }h\in L^{\gbar'}(\T).
\end{equation}

For the transport flux, decompose
\[
mD_pH_0(x,Du)-\theta D_pH_0(x,Dv)
=
(m-\theta)D_pH_0(x,Du)
+
\theta\bigl(D_pH_0(x,Du)-D_pH_0(x,Dv)\bigr).
\]
The first term is estimated by \eqref{eq:DpH_growth}. Since
\[
\frac1{\bbar}+\frac{\alpha-1}{\gbar}
=
\frac1{\gbar'},
\]
and \(1\in L^{\gbar/(\alpha-1)}(\T)\), H\"older's inequality gives
\begin{equation}\label{eq:transport_first_lip}
\|(m-\theta)D_pH_0(x,Du)\|_{L^{\gbar'}}
\le
C_R\|m-\theta\|_{L^{\bbar}}.
\end{equation}

For the second transport term, Assumption~\ref{ass:alg} gives
\[
|D_pH_0(x,Du)-D_pH_0(x,Dv)|
\le
C\bigl(1+|Du|^{\alpha-2}+|Dv|^{\alpha-2}\bigr)|Du-Dv|.
\]
Set
\[
r_\alpha :=
\begin{cases}
\infty, & \alpha=2,\\[2mm]
\dfrac{\gbar}{\alpha-2}, & \alpha>2.
\end{cases}
\]
Then
\[
\frac1{\bbar}+\frac1{r_\alpha}+\frac1{\gbar}
=
\frac1{\gbar'},
\]
where \(1/r_\alpha=0\) when \(\alpha=2\). Since
\[
1+|Du|^{\alpha-2}+|Dv|^{\alpha-2}
\]
is bounded in \(L^{r_\alpha}(\T)\) by a constant depending only on \(R\),
H\"older's inequality yields
\begin{equation}\label{eq:transport_second_lip}
\left\|
\theta\bigl(D_pH_0(x,Du)-D_pH_0(x,Dv)\bigr)
\right\|_{L^{\gbar'}}
\le
C_R\|Du-Dv\|_{L^{\gbar}}.
\end{equation}

The lower-order term \(m-\theta\) defines an element of
\(W^{-1,\gbar'}(\T)\). Indeed, for every
\(\varphi\in W^{1,\gbar}(\T)\),
\[
\left|
\int_\T (m-\theta)\varphi\,dx
\right|
\le
\|m-\theta\|_{L^{\bbar}}
\|\varphi\|_{L^{\bbar'}}
\le
C
\|m-\theta\|_{L^{\bbar}}
\|\varphi\|_{W^{1,\gbar}},
\]
because \(\T\) has finite measure, \(\gbar\ge\bbar'\), and
\[
\|\varphi\|_{L^{\bbar'}}
\le
C\|\varphi\|_{L^{\gbar}}
\le
C\|\varphi\|_{W^{1,\gbar}}.
\]
Therefore
\begin{equation}\label{eq:lower_order_m_lip}
\|m-\theta\|_{W^{-1,\gbar'}}
\le
C\|m-\theta\|_{L^{\bbar}}.
\end{equation}

It remains to estimate the two regularization terms. Applying
\eqref{eq:Jp_lip_pointwise} with \(r=\gbar\), and using H\"older's inequality with exponent \(\gbar/(\gbar-2)\), which is finite since \(\gbar>2\), gives
\begin{equation}\label{eq:p_lap_flux_lip}
\||Du|^{\gbar-2}Du-|Dv|^{\gbar-2}Dv\|_{L^{\gbar'}}
\le
C_R\|Du-Dv\|_{L^{\gbar}},
\end{equation}
and similarly
\begin{equation}\label{eq:p_lap_zero_lip}
\||u|^{\gbar-2}u-|v|^{\gbar-2}v\|_{L^{\gbar'}}
\le
C_R\|u-v\|_{L^{\gbar}}.
\end{equation}
Using \eqref{eq:div_bound}, \eqref{eq:Lgprime_to_dual},
\eqref{eq:p_lap_flux_lip}, and \eqref{eq:p_lap_zero_lip}, the regularization
terms contribute at most
\[
\eps C_R\|u-v\|_{W^{1,\gbar}}
\]
to the \(W^{-1,\gbar'}\)-norm.

Combining
\eqref{eq:transport_first_lip},
\eqref{eq:transport_second_lip},
\eqref{eq:lower_order_m_lip},
\eqref{eq:p_lap_flux_lip}, and
\eqref{eq:p_lap_zero_lip}, we obtain
\begin{equation}\label{eq:F2_lip}
\|F_2\|_{W^{-1,\gbar'}}
\le
C_{R,\eps}
\left(
\|m-\theta\|_{L^{\bbar}}
+
\|u-v\|_{W^{1,\gbar}}
\right).
\end{equation}
The estimates \eqref{eq:F1_lip} and \eqref{eq:F2_lip} imply
\eqref{eq:local_lip_Aeps}, after increasing the constant.
\end{proof}

The estimate uses Assumption~\ref{ass:base} for well-definedness and growth,
and Assumption~\ref{ass:alg} for the difference estimates
\eqref{eq:DpH_lip} and \eqref{eq:g_lip}. It does not use the quantitative
monotonicity condition \eqref{eq:g_mono}.

\section{Mirror geometry}\label{sec:bregman}

In a Hilbert space \(H\), the basic unconstrained dynamics associated with a
monotone operator \(\mathcal A:H\to H\) is the flow
\[
\dot z = -\mathcal A(z),
\]
for which the squared distance \(\frac12\|z-z^*\|_H^2\) to a zero
\(z^*\) of \(\mathcal A\) is nonincreasing under monotonicity.
In our setting, $\mathcal A_\eps$ is defined on a subset of a Banach space $X$ and takes values in $X^*$ rather than $X$. Therefore, the expression $\dot z = -\mathcal A_\eps(z)$ does not define an evolution on $X$ and, in general, the Banach space norm may not be a Lyapunov functional.
In the Banach space setting, the mirror transformation replaces the primal flow
by a dual evolution, and the Bregman divergence replaces the Hilbert-space
Lyapunov functional.
The first two subsections develop the abstract mirror geometry and constrained
mirror step; the last two adapt the construction to the mixed
Lebesgue--Sobolev exponents of our problem.

\subsection{Mirror transformation and Bregman divergence}

Let \(X\) be the reflexive Banach space fixed above, with dual \(X^*\) and
duality pairing \(\pair{\cdot}{\cdot}_{X^*,X}\). Let \(K\subset X\) be
nonempty, convex, and weakly closed, and let
\(\mathcal A:K\to X^*\) be a monotone operator.
The construction in this subsection is abstract and does not use the concrete
product structure of \(X\). The adaptation to
\(L^{\bbar}(\T)\times W^{1,\gbar}(\T)\) is carried out in Section~\ref{memm}.

The starting point of the construction is a fixed strictly convex potential 
\(\Phi:X\to\Rr\). Here strict convexity means that, for all distinct \(x,y\in X\) and all
\(t\in(0,1)\),
\[
\Phi(tx+(1-t)y)<t\Phi(x)+(1-t)\Phi(y).
\]
The mirror geometry depends on this choice, and different potentials give rise to different geometries on the same underlying space. 

\begin{definition}[Mirror potential]\label{def:mirror_potential}
A mirror potential on \(X\) is a Fréchet differentiable, strictly convex,
lower semicontinuous, and coercive functional \(\Phi:X\to\Rr\), meaning
\[
\frac{\Phi(z)}{\|z\|_X}\to\infty
\qquad\text{as }\|z\|_X\to\infty,
\]
whose Gâteaux derivative \(D\Phi:X\to X^*\) is bounded on bounded sets.
\end{definition}

The map $D\Phi:X\to X^*$ induced by the potential serves as a nonlinear identification between primal and dual variables. In a Hilbert space with $\Phi(z)=\tfrac12\|z\|^2$, $D\Phi$ is the canonical Riesz isomorphism; for general $\Phi$, it is a nonlinear duality map adapted to the geometry of $\Phi$.

The abstract mirror flow associated with $\mathcal A$ is the dual evolution
\begin{equation}\label{eq:mirror_flow_dual}
\frac{d}{ds}D\Phi(z(s))=-\mathcal A(z(s)),
\end{equation}
which, when $\Phi$ is twice differentiable, can be formally rewritten in primal coordinates as
\begin{equation}\label{eq:mirror_flow_primal}
\frac{dz}{ds}=-\bigl[D^2\Phi(z(s))\bigr]^{-1}\mathcal A(z(s)).
\end{equation}
The dual formulation~\eqref{eq:mirror_flow_dual} is the natural one. The
primal formula~\eqref{eq:mirror_flow_primal} is purely formal in the present
\(p\)-power setting and is not used in the analysis; it is recorded only to
make the Hessian-preconditioned structure of the dynamics transparent.

We now define the Bregman divergence generated by \(\Phi\), which will play the
role of the Lyapunov functional for the mirror flow.

\begin{definition}[Bregman divergence]\label{def:bregman}
The \emph{Bregman divergence} of a mirror potential $\Phi$ is
\begin{equation}\label{eq:bregman_def}
\mathcal D_\Phi(x,y):=\Phi(x)-\Phi(y)-\pair{D\Phi(y)}{x-y},
\qquad x,y\in X.
\end{equation}
\end{definition}

The strict convexity of $\Phi$ implies $\mathcal D_\Phi(x,y)\ge 0$ with equality if and only if $x=y$. The divergence is generally asymmetric, $\mathcal D_\Phi(x,y)\ne \mathcal D_\Phi(y,x)$. When $\Phi(z)=\tfrac12\|z\|^2$ on a Hilbert space, $\mathcal D_\Phi(x,y)=\tfrac12\|x-y\|^2$, and one recovers the squared Hilbert distance.

The next lemma states the role of \(\mathcal D_\Phi\) as a Lyapunov functional
for the mirror flow. It is the Bregman analogue of the Hilbert-space estimate
\[
\frac{d}{dt}\frac12\|z(t)-z^*\|_H^2\le0.
\]
This continuous-time statement is motivational only; the convergence theory is carried out entirely at the discrete level in Section~\ref{sec:convergence} and does not rely on it. Moreover, we do not assert that the formal flow preserves
\(K\); the statement is conditional on a smooth trajectory that remains in
\(K\). 

\begin{lemma}[Bregman Lyapunov property]\label{lem:bregman_lyapunov}
Let \(\mathcal A:K\to X^*\) be monotone, let \(\Phi\) be a mirror potential,
and let \(z^*\in K\) solve \(\VI(\mathcal A,K)\). 
If \(z:[0,T]\to K\) is a smooth trajectory such that
\(s\mapsto D\Phi(z(s))\) is differentiable and satisfies
\eqref{eq:mirror_flow_dual},
then
\begin{equation}\label{eq:lyapunov_flow}
\frac{d}{ds}\mathcal D_\Phi(z^*,z(s))=-\pair{\mathcal A(z(s))}{z(s)-z^*}\le 0.
\end{equation}
In particular, \(s\mapsto\mathcal D_\Phi(z^*,z(s))\) is nonincreasing.
\end{lemma}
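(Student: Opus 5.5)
The plan is to differentiate the Bregman divergence along the trajectory directly from Definition~\ref{def:bregman}, with the first argument frozen at \(z^*\). Since \(z^*\) is fixed, \(\Phi(z^*)\) is constant. Expanding the pairing term, we can write
\[
\mathcal D_\Phi(z^*,z(s))
=
\Phi(z^*)-\Phi(z(s))+\pair{D\Phi(z(s))}{z(s)}-\pair{D\Phi(z(s))}{z^*}.
\]
We then differentiate each \(s\)-dependent term separately.

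For \(\Phi(z(s))\), Fréchet differentiability of \(\Phi\) and smoothness of \(z\) give, by the chain rule,
\[
\tfrac{d}{ds}\Phi(z(s))=\pair{D\Phi(z(s))}{\dot z(s)}.
\]
For \(\pair{D\Phi(z(s))}{z(s)}\), we use a product rule for the bilinear duality pairing. This is legitimate because \(s\mapsto D\Phi(z(s))\) is differentiable in \(X^*\) by hypothesis and \(s\mapsto z(s)\) is differentiable in \(X\). It gives
\[
\pair{\tfrac{d}{ds}D\Phi(z(s))}{z(s)}+\pair{D\Phi(z(s))}{\dot z(s)}.
\]
The last term is linear in \(D\Phi(z(s))\) with \(z^*\) fixed, so its derivative is \(\pair{\tfrac{d}{ds}D\Phi(z(s))}{z^*}\).

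Summing these, the two terms \(\pair{D\Phi(z(s))}{\dot z(s)}\) cancel. We are left with
\[
\tfrac{d}{ds}\mathcal D_\Phi(z^*,z(s))=\pair{\tfrac{d}{ds}D\Phi(z(s))}{z(s)-z^*}.
\]
Substituting the mirror flow \eqref{eq:mirror_flow_dual} then gives the identity in \eqref{eq:lyapunov_flow}.

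For the sign, we use that \(z(s)\in K\) and that \(z^*\) solves \(\VI(\mathcal A,K)\). This gives \(\pair{\mathcal A(z^*)}{z(s)-z^*}\ge0\). Monotonicity of \(\mathcal A\) on \(K\) gives \(\pair{\mathcal A(z(s))-\mathcal A(z^*)}{z(s)-z^*}\ge0\). Adding the two inequalities yields \(\pair{\mathcal A(z(s))}{z(s)-z^*}\ge0\), so the derivative is nonpositive. Nonincrease of \(s\mapsto\mathcal D_\Phi(z^*,z(s))\) then follows from the mean value theorem for real functions.

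The only delicate step is justifying the product rule for the pairing term. I would write the difference quotient
\[
\frac{\pair{D\Phi(z(s+h))}{z(s+h)}-\pair{D\Phi(z(s))}{z(s)}}{h}
\]
and split it by adding and subtracting \(\pair{D\Phi(z(s+h))}{z(s)}\). One piece then involves the difference quotient of \(D\Phi(z(\cdot))\) in \(X^*\) tested against \(z(s)\). The other involves \(D\Phi(z(s+h))\) tested against the difference quotient of \(z\). For the second piece, \(D\Phi(z(s+h))\) is bounded (differentiable, hence continuous, in \(X^*\)) and converges to \(D\Phi(z(s))\), while the difference quotient of \(z\) converges to \(\dot z(s)\) in \(X\). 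Passing to the limit in both pieces gives the product rule.
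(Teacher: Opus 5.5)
Your proposal is correct and follows essentially the same route as the paper. You differentiate \(\mathcal D_\Phi(z^*,z(s))\) so that the two \(\pair{D\Phi(z(s))}{\dot z(s)}\) terms cancel, leaving \(-\pair{\tfrac{d}{ds}D\Phi(z(s))}{z^*-z(s)}\); you substitute the dual flow \eqref{eq:mirror_flow_dual}; and you get the sign from the variational inequality at \(z^*\) together with monotonicity, exactly as the paper does, while your chain-rule and product-rule justification makes explicit the step the paper compresses into ``differentiating \eqref{eq:bregman_def} in \(s\)''.
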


\begin{proof}
Differentiating~\eqref{eq:bregman_def} in $s$ with $x=z^*$ and $y=z(s)$ yields
\[
\frac{d}{ds}\mathcal D_\Phi(z^*,z(s))=-\pair{\tfrac{d}{ds}D\Phi(z(s))}{z^*-z(s)}=\pair{\mathcal A(z(s))}{z^*-z(s)},
\]
where we used~\eqref{eq:mirror_flow_dual} in the second equality. Because $z^*$ solves the variational inequality, $\pair{\mathcal A(z^*)}{z(s)-z^*}\ge 0$, and monotonicity of $\mathcal A$ gives
\[
\pair{\mathcal A(z(s))}{z(s)-z^*}\ge\pair{\mathcal A(z(s))-\mathcal A(z^*)}{z(s)-z^*}\ge 0.
\]
Together, these estimates prove~\eqref{eq:lyapunov_flow}.
\end{proof}

We conclude this subsection by recording the three-point identity for the Bregman
divergence. This identity is used repeatedly in the discrete analysis of
Section~\ref{sec:convergence}: for any \(a,b,c\in X\),
\begin{equation}\label{eq:three_point}
\mathcal D_\Phi(a,c)-\mathcal D_\Phi(a,b)-\mathcal D_\Phi(b,c)=\pair{D\Phi(b)-D\Phi(c)}{a-b},
\end{equation}
which follows directly from~\eqref{eq:bregman_def} by expanding both sides and cancelling. For \(\Phi(z)=\frac12\|z\|_H^2\) on a Hilbert space, this reduces to
\[
\frac12\|a-c\|_H^2-\frac12\|a-b\|_H^2-\frac12\|b-c\|_H^2
=
\langle b-c,a-b\rangle_H,
\]
the usual three-point identity for the squared norm.

\subsection{Discrete mirror update and constrained mirror step}\label{sec:mirror_step}
We now apply the one-step mirror update to the bounded admissible set
\(K\) fixed in \eqref{eq:bounded_K}.
To transform the continuous mirror
flow~\eqref{eq:mirror_flow_dual} into a numerical algorithm, fix \(x\in K\) and
freeze the operator at \(x\) over one time step.
The unconstrained endpoint of an explicit Euler step in dual
variables, denoted by \(\widetilde y\), satisfies
\[
\frac{D\Phi(\widetilde y)-D\Phi(x)}{\tau}
=
-\mathcal A(x).
\]
Equivalently, setting \(\xi=\tau\mathcal A(x)\in X^*\), the dual update is
\[
D\Phi(\widetilde y)=D\Phi(x)-\xi.
\]
Whenever \(D\Phi\) is invertible on the relevant range, this defines a unique
primal point
\begin{equation}\label{eq:mirror_free}
\widetilde y=(D\Phi)^{-1}\bigl(D\Phi(x)-\xi\bigr),
\end{equation}
which we call the \emph{free mirror translation} from \(x\) along \(\xi\).

Equivalently, whenever it is defined, the free translation is the unconstrained
minimizer of the Bregman-regularized functional
\[
y\mapsto \pair{\xi}{y}+\mathcal D_\Phi(y,x),
\]
since the first-order condition for this functional is
\(D\Phi(y)-D\Phi(x)+\xi=0\). However, the free step
\eqref{eq:mirror_free} may not lie in \(K\): even when \(x\in K\),
\(\widetilde y\) may violate the nonnegativity constraint or the bounded
truncation. To obtain an iteration that remains in \(K\), we minimize the same
functional over \(K\).

\begin{definition}[Mirror map]\label{def:mirror_map}
For \(x\in K\) and \(\xi\in X^*\), the \emph{mirror map} is
\begin{equation}\label{eq:mirror_step}
\Mirr_x(\xi):=\operatorname*{argmin}_{y\in K}\bigl\{\pair{\xi}{y}+\mathcal D_\Phi(y,x)\bigr\}.
\end{equation}
\end{definition}

Thus the constrained explicit Euler step with step size \(\tau\) is
\[
z^+=\Mirr_x\bigl(\tau\mathcal A(x)\bigr).
\]
Equivalently, \(z^+\in K\) is characterized by
\[
\pair{D\Phi(z^+)-D\Phi(x)+\tau\mathcal A(x)}{y-z^+}\ge0
\qquad
\text{for all }y\in K.
\]
In the Hilbert case \(\Phi(z)=\frac12\|z\|_H^2\), using the Hilbert
identification of \(H\) and \(H^*\), let \(P_K\) denote the metric projection
onto \(K\),
\[
P_K(y):=\operatorname*{argmin}_{z\in K}\frac12\|z-y\|_H^2.
\]
Then the mirror step reduces to the projected Euler step
\[
z^+=P_K\bigl(x-\tau\mathcal A(x)\bigr).
\]

We collect the basic properties of the mirror map.
\begin{lemma}[Properties of the mirror map]\label{lem:mirror_step_props}
Let \(\Phi\) be a mirror potential, let \(K\) be the bounded admissible set
defined in \eqref{eq:bounded_K}, let \(x\in K\), and let \(\xi\in X^*\). Then:
\begin{enumerate}[label=\textup{(\roman*)}]
\item The minimization in~\eqref{eq:mirror_step} has a unique solution, so
\(\Mirr_x(\xi)\) is well defined.
\item A point \(z^+\in K\) equals \(\Mirr_x(\xi)\) if and only if
\begin{equation}\label{eq:projection_characterization}
\pair{D\Phi(z^+)-D\Phi(x)+\xi}{y-z^+}\ge 0 \qquad \text{for all } y\in K.
\end{equation}
\item If the free translation \(\widetilde y\) in~\eqref{eq:mirror_free} is
defined and lies in \(K\), then \(\Mirr_x(\xi)=\widetilde y\).
\item If \(\widetilde y\) is defined, then \(\Mirr_x(\xi)\) is its Bregman
projection onto \(K\):
\begin{equation}\label{eq:mirror_as_projection}
\Mirr_x(\xi)=\operatorname*{argmin}_{y\in K}\mathcal D_\Phi(y,\widetilde y).
\end{equation}
\end{enumerate}
\end{lemma}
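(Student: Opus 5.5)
We treat the four items in order: (i) is the main one and (ii)--(iv) follow from it. Throughout we use only the abstract properties in Definition~\ref{def:mirror_potential} together with the facts that \(K\) is nonempty, convex, closed, bounded and weakly compact (Lemma~\ref{lem:bounded_admissible_set}).

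For (i), set
\[
F(y):=\pair{\xi}{y}+\mathcal D_\Phi(y,x)=\Phi(y)+\pair{\xi-D\Phi(x)}{y}+\text{const}.
\]
This is \(\Phi\) plus a continuous affine functional. We argue as follows.
\begin{itemize}
\item[\textbullet] \(F\) is convex. It is also norm lower semicontinuous, because \(\Phi\) is Fr\'echet differentiable and hence continuous. Convexity then makes \(F\) weakly lower semicontinuous.
\item[\textbullet] Since \(K\) is weakly compact, the direct method gives a minimizer on \(K\). We do not need the coercivity of \(\Phi\) here because \(K\) is already bounded, but it would give existence on the unbounded set \(K_+\) as well.
\item[\textbullet] Uniqueness follows from strict convexity of \(\Phi\), which is inherited by \(F\), and convexity of \(K\). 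Two distinct minimizers would have a midpoint in \(K\) with strictly smaller value.
\end{itemize}

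For (ii), we use the standard first-order characterization of a minimizer of a differentiable convex function over a convex set. In the forward direction, \(z^+\in K\) and \(y\in K\) give \(z^++t(y-z^+)\in K\) for \(t\in(0,1]\). We then divide \(F(z^++t(y-z^+))-F(z^+)\ge0\) by \(t\) and let \(t\downarrow0\); since \(DF(z^+)=D\Phi(z^+)-D\Phi(x)+\xi\), this yields \eqref{eq:projection_characterization}. In the reverse direction, convexity of \(F\) gives \(F(y)\ge F(z^+)+\pair{DF(z^+)}{y-z^+}\ge F(z^+)\), so \(z^+\) is a minimizer. By (i) it is the minimizer, so \(z^+=\Mirr_x(\xi)\).

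For (iii), if \(\widetilde y\in K\) then \(D\Phi(\widetilde y)-D\Phi(x)+\xi=0\). Thus \eqref{eq:projection_characterization} holds with equality for \(z^+=\widetilde y\), and (ii) identifies \(\widetilde y\) as \(\Mirr_x(\xi)\).

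For (iv), we substitute \(D\Phi(x)-\xi=D\Phi(\widetilde y)\) into \(F\):
\[
F(y)=\Phi(y)-\pair{D\Phi(\widetilde y)}{y}+c(x,\xi)=\mathcal D_\Phi(y,\widetilde y)+c'(x,\xi,\widetilde y).
\]
Here the constants \(c\) and \(c'\) are independent of \(y\). The two functionals therefore have the same minimizers over \(K\), which gives \eqref{eq:mirror_as_projection}.

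The only genuinely delicate step is the weak lower semicontinuity and weak compactness argument in (i). This is where reflexivity of \(X\) and boundedness of \(K\) enter. Everything else is algebra.
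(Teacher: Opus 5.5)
Your proposal is correct and follows the paper's proof essentially step by step. In (i) you use the direct method on the weakly compact set \(K\), with strict convexity giving uniqueness; in (ii) the first-order optimality condition; and in (iv) the substitution \(D\Phi(\widetilde y)=D\Phi(x)-\xi\), which shows that the two objectives differ by a \(y\)-independent constant. The only variation is in (iii): you apply the characterization (ii) directly, since the optimality expression vanishes at \(\widetilde y\), whereas the paper first identifies \(\widetilde y\) as the unique unconstrained minimizer over \(X\); your route is slightly more economical.
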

\begin{proof}
\emph{(i)}
Existence follows from the direct method: by
Lemma~\ref{lem:bounded_admissible_set}, \(K\) is weakly compact. Since
\(\Phi\) is convex and norm lower semicontinuous, it is weakly lower
semicontinuous; hence the objective
\(y\mapsto\pair{\xi}{y}+\mathcal D_\Phi(y,x)\) is weakly lower semicontinuous,
as it differs from \(\Phi(y)\) by an affine weakly continuous term.
Uniqueness follows from the strict convexity of
\(\mathcal D_\Phi(\cdot,x)\), inherited from the strict convexity of \(\Phi\).

\emph{(ii)} The variational characterization is the first-order optimality
condition for minimizing the differentiable convex objective
\(y\mapsto\pair{\xi}{y}+\mathcal D_\Phi(y,x)\) over the closed convex set \(K\).

\emph{(iii)} If the free translation \(\widetilde y\) is defined, then
\(D\Phi(\widetilde y)=D\Phi(x)-\xi\), so \(\widetilde y\) satisfies the
first-order optimality condition for minimizing
\(y\mapsto\pair{\xi}{y}+\mathcal D_\Phi(y,x)\) over \(X\). Hence it is the
unique unconstrained minimizer. If \(\widetilde y\in K\), this unconstrained
minimum is admissible and therefore agrees with the constrained minimum.

\emph{(iv)} Assume that the free translation \(\widetilde y\) is defined. By~\eqref{eq:mirror_free} we have \(D\Phi(\widetilde y)=D\Phi(x)-\xi\).
Substituting into~\eqref{eq:bregman_def}, with \(y\) in the first argument and
\(\widetilde y\) in the second,
\[
\mathcal D_\Phi(y,\widetilde y)=\Phi(y)-\Phi(\widetilde y)-\pair{D\Phi(x)-\xi}{y-\widetilde y}.
\]
Expanding and using the definition of \(\mathcal D_\Phi(y,x)\) gives
\[
\mathcal D_\Phi(y,\widetilde y)=\mathcal D_\Phi(y,x)+\pair{\xi}{y}+C(x,\xi),
\]
where
\[
C(x,\xi):=
\Phi(x)-\Phi(\widetilde y)
-\pair{D\Phi(x)-\xi}{x-\widetilde y}
-\pair{\xi}{x}
\]
is independent of \(y\). Minimizing both sides over \(y\in K\)
yields~\eqref{eq:mirror_as_projection}.
\end{proof}

The finite-dimensional implementation in Section~\ref{sec:numerics} uses the 
same mirror structure but enforces only the nonnegativity constraint, because 
the bounded truncation remains inactive in the reported runs.

\subsection{The mixed-exponent mirror potential}
\label{memm}

We now adapt the mirror geometry to the product space
\[
X=L^{\bbar}(\T)\times W^{1,\gbar}(\T).
\]
Define
\begin{equation}\label{eq:phi}
\Phi(m,u)
=
\frac1{\bbar}\int_\T |m|^{\bbar}\,dx
+
\frac1{\gbar}\int_\T
\bigl(|Du|^{\gbar}+|u|^{\gbar}\bigr)\,dx.
\end{equation}
On \(K\), where \(m\ge0\), the first term in \eqref{eq:phi} is simply
\(\frac1{\bbar}\int_\T m^{\bbar}\,dx\). We write
\[
\Phi(m,u)=\Phi_m(m)+\Phi_u(u),
\]
where
\[
\Phi_m(m)=\frac1{\bbar}\int_\T |m|^{\bbar}\,dx,
\qquad
\Phi_u(u)=
\frac1{\gbar}\int_\T
\bigl(|Du|^{\gbar}+|u|^{\gbar}\bigr)\,dx.
\]
Because \(\Phi\) is separable, the associated Bregman divergence splits:
\begin{equation}\label{eq:bregman_split}
\mathcal D_\Phi\bigl((m_1,u_1),(m_2,u_2)\bigr)
=
\mathcal D_{\Phi_m}(m_1,m_2)
+
\mathcal D_{\Phi_u}(u_1,u_2).
\end{equation}
The split \eqref{eq:bregman_split} is the key structural feature of the
mixed-exponent geometry: it lets the density and value-function blocks be
estimated separately, even though \(\mathcal A_\eps\) couples them. The
nonnegativity constraint \(m\ge0\) acts only on the density, while the bounded
truncation in \(K\) acts on the product norm.

The density block has derivative
\begin{equation}\label{eq:grad_phi_m}
D\Phi_m(m)=|m|^{\bbar-2}m\in L^{\bbar'}(\T).
\end{equation}
For \(m\ge0\), \eqref{eq:grad_phi_m} becomes
\(D\Phi_m(m)=m^{\bbar-1}\).

For the value-function block, the derivative is the element of
\(W^{-1,\gbar'}(\T)\) defined by
\[
\pair{D\Phi_u(u)}{v}_{W^{-1,\gbar'},W^{1,\gbar}}
=
\int_\T |Du|^{\gbar-2}Du\cdot Dv\,dx
+
\int_\T |u|^{\gbar-2}u\,v\,dx
\]
for every \(v\in W^{1,\gbar}(\T)\). In distributional form,
\begin{equation}\label{eq:grad_phi_u}
D\Phi_u(u)
=
-\diver\!\bigl(|Du|^{\gbar-2}Du\bigr)
+
|u|^{\gbar-2}u.
\end{equation}
Together, \eqref{eq:grad_phi_m} and \eqref{eq:grad_phi_u} describe the duality
map \(D\Phi:X\to X^*\). 
The second block has the same \(\gbar\)-Laplacian structure as the
regularizing term in \(\mathcal A_\eps\) (see \eqref{eq:operator}), aligning
the Bregman geometry with the coercive part of the operator.
Moreover, \(\Phi\) is a mirror potential in the sense of
Definition~\ref{def:mirror_potential}. Indeed, it is strictly convex, lower
semicontinuous, and coercive on \(X\); for some \(c>0\),
\[
\Phi(m,u)
\ge
c\left(
\|m\|_{L^{\bbar}}^{\bbar}
+
\|u\|_{W^{1,\gbar}}^{\gbar}
\right).
\]
Since \(\bbar,\gbar>1\), this implies coercivity. The formulas
\eqref{eq:grad_phi_m} and \eqref{eq:grad_phi_u} also show that \(D\Phi\) is
bounded on bounded subsets of \(X\).

\subsection{Coercivity of the mixed-exponent Bregman divergence}

We next record the lower bound connecting \(\mathcal D_\Phi\) to the natural
Banach norms of the problem. By \eqref{eq:bregman_split}, it is enough to
control the density and value-function blocks separately.

\begin{lemma}[Lower bound for the mixed-exponent Bregman divergence]
Let \(\Phi\) be given by \eqref{eq:phi}. Then there exists
\(\sigma>0\), depending only on \(\bbar\) and \(\gbar\), such that for all
\((m_1,u_1),(m_2,u_2)\in K\),
\begin{equation}\label{eq:bregman_lb}
\mathcal D_\Phi\bigl((m_1,u_1),(m_2,u_2)\bigr)
\ge
\sigma
\left(
\|m_1-m_2\|_{L^{\bbar}}^{\bbar}
+
\|u_1-u_2\|_{W^{1,\gbar}}^{\gbar}
\right).
\end{equation}
\end{lemma}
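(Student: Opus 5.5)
By the splitting \eqref{eq:bregman_split}, it is enough to bound the density block and the value-function block from below separately and then add the two bounds.

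\emph{Density block.} For \((m_1,u_1),(m_2,u_2)\in K\) we have \(m_1,m_2\ge0\) a.e., so \(D\Phi_m(m_2)=m_2^{\bbar-1}\). The Bregman divergence is then the integral of the pointwise scalar divergence:
\[
\mathcal D_{\Phi_m}(m_1,m_2)=\int_\T\bigl[\psi_{\bbar}(m_1)-\psi_{\bbar}(m_2)-\psi_{\bbar}'(m_2)(m_1-m_2)\bigr]\,dx .
\]
Since \(\bbar\ge2\), the scalar bound \eqref{eq:scalar_bregman_power} of Lemma~\ref{lem:bregman_power} applies pointwise. Integrating it gives
\[
\mathcal D_{\Phi_m}(m_1,m_2)\ge\frac1{\bbar}\|m_1-m_2\|_{L^{\bbar}}^{\bbar}.
\]

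\emph{Value-function block.} The pairing of \(D\Phi_u(u_2)\) with \(u_1-u_2\) is given by the weak formula following \eqref{eq:grad_phi_m}. Hence \(\mathcal D_{\Phi_u}(u_1,u_2)\) is the sum of two integrated pointwise divergences:
\begin{itemize}
\item one for \(\varphi_{\gbar}\) on \(\RR^d\), evaluated at \((Du_1,Du_2)\);
\item one for \(\varphi_{\gbar}\) on \(\RR\), evaluated at \((u_1,u_2)\).
\end{itemize}
Since \(\gbar>2\), the vector bound \eqref{eq:vector_bregman_power} applies to both, with \(N=d\) and \(N=1\). This gives
\[
\mathcal D_{\Phi_u}(u_1,u_2)\ge c_{\gbar}\bigl(\|Du_1-Du_2\|_{L^{\gbar}}^{\gbar}+\|u_1-u_2\|_{L^{\gbar}}^{\gbar}\bigr).
\]

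\emph{Main obstacle.} The only remaining step is to pass from the sum of \(\gbar\)-th powers to the \(\gbar\)-th power of the norm \(\|u\|_{W^{1,\gbar}}=\|u\|_{L^{\gbar}}+\|Du\|_{L^{\gbar}}\). By convexity of \(t\mapsto t^{\gbar}\),
\[
a^{\gbar}+b^{\gbar}\ge2^{1-\gbar}(a+b)^{\gbar},
\]
the same inequality already used in Theorem~\ref{thm:Aeps_properties}. Therefore
\[
\mathcal D_{\Phi_u}(u_1,u_2)\ge 2^{1-\gbar}c_{\gbar}\|u_1-u_2\|_{W^{1,\gbar}}^{\gbar}.
\]

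Adding the two blocks yields \eqref{eq:bregman_lb} with
\[
\sigma=\min\{1/\bbar,\;2^{1-\gbar}c_{\gbar}\}=\min\{1/\bbar,\;2^{3-2\gbar}/\gbar\},
\]
which depends only on \(\bbar\) and \(\gbar\). The hypothesis \((m_i,u_i)\in K\) is used only for the nonnegativity of the densities, which is what makes the scalar lemma applicable.
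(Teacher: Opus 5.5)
Your proof is correct and follows the paper's own argument: split $\mathcal D_\Phi$ into the density and value-function blocks, apply the scalar power bound pointwise to the nonnegative densities, apply the vector bound to both the gradient and the zero-order terms, and finish with $a^{\gbar}+b^{\gbar}\ge 2^{1-\gbar}(a+b)^{\gbar}$ to reach $\|u_1-u_2\|_{W^{1,\gbar}}^{\gbar}$. The only addition is your explicit constant $\sigma=\min\{1/\bbar,\,2^{3-2\gbar}/\gbar\}$, which the paper leaves implicit.
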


\begin{proof}
By \eqref{eq:bregman_split},
\[
\mathcal D_\Phi\bigl((m_1,u_1),(m_2,u_2)\bigr)
=
\mathcal D_{\Phi_m}(m_1,m_2)
+
\mathcal D_{\Phi_u}(u_1,u_2).
\]
Since \(m_1,m_2\ge0\), the scalar estimate
\eqref{eq:scalar_bregman_power}, with \(r=\bbar\), gives
\[
\mathcal D_{\Phi_m}(m_1,m_2)
\ge
\frac1{\bbar}
\|m_1-m_2\|_{L^{\bbar}}^{\bbar}.
\]
For the value-function block, applying \eqref{eq:vector_bregman_power} with
\(r=\gbar\) to the gradient term and to the zero-order term gives, for a
constant \(c_{\gbar}>0\),
\[
\mathcal D_{\Phi_u}(u_1,u_2)
\ge
c_{\gbar}\|Du_1-Du_2\|_{L^{\gbar}}^{\gbar}
+
c_{\gbar}\|u_1-u_2\|_{L^{\gbar}}^{\gbar}
\geq c \|u_1-u_2\|_{W^{1,\gbar}}^{\gbar}.
\]
Combining the density
and value-function estimates yields \eqref{eq:bregman_lb}.
\end{proof}

For the convergence proof, we also need a single exponent on the bounded set
\(K\).

\begin{lemma}[Exponent comparison on \(K\)]\label{lem:norm_comparison}
Let
\[
\kappa:=\max\{\bbar,\gbar\}.
\]
Let \((m_i,u_i)\in K\), \(i=1,2\). Then there exists \(C_R>0\), depending only
on \(R\), \(\bbar\), and \(\gbar\), such that
\begin{equation}\label{eq:norm_comparison}
\mathcal D_\Phi\bigl((m_1,u_1),(m_2,u_2)\bigr)
\ge
C_R
\left(
\|m_1-m_2\|_{L^{\bbar}}^\kappa
+
\|u_1-u_2\|_{W^{1,\gbar}}^\kappa
\right).
\end{equation}
\end{lemma}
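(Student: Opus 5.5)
The plan is to deduce \eqref{eq:norm_comparison} directly from the lower bound \eqref{eq:bregman_lb} by raising the smaller exponent to \(\kappa\), using the uniform bound on \(K\). Since both points lie in \(K\subset\overline B_X(0,R)\), the triangle inequality gives
\[
\|m_1-m_2\|_{L^{\bbar}}\le 2R,
\qquad
\|u_1-u_2\|_{W^{1,\gbar}}\le 2R.
\]
The elementary observation is this: for \(0\le t\le 2R\) and \(1<r\le\kappa\),
\[
t^\kappa=t^{r}t^{\kappa-r}\le (2R)^{\kappa-r}t^{r},
\qquad\text{hence}\qquad
t^{r}\ge (2R)^{r-\kappa}t^\kappa .
\]

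First, apply this with \(t=\|m_1-m_2\|_{L^{\bbar}}\) and \(r=\bbar\), and then with \(t=\|u_1-u_2\|_{W^{1,\gbar}}\) and \(r=\gbar\). For whichever exponent equals \(\kappa\) the factor is simply \(1\). Second, insert both bounds into \eqref{eq:bregman_lb}. This yields \eqref{eq:norm_comparison} with
\[
C_R=\sigma\min\{(2R)^{\bbar-\kappa},(2R)^{\gbar-\kappa}\}=\sigma(2R)^{\min\{\bbar,\gbar\}-\kappa},
\]
where the last equality assumes \(2R\ge1\). In general one takes the minimum of the two factors. This constant depends only on \(R\), \(\bbar\), \(\gbar\), since \(\sigma\) does.

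There is no real obstacle. The only points requiring care are the ones below.

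\begin{itemize}
\item The case \(t=0\) is trivial.
\item The comparison needs \(t\) bounded above, which is exactly where boundedness of \(K\) enters. Without it the inequality fails as \(t\to\infty\) whenever \(\bbar\ne\gbar\).
\item The positivity and nonnegativity hypotheses needed for \eqref{eq:bregman_lb}, namely \(m_i\ge0\), are already guaranteed by \(K\subset K_+\).
\end{itemize}
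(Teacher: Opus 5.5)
Your proof is correct and matches the paper's argument step for step. You bound both differences by $2R$ via the triangle inequality on $K\subset\overline B_X(0,R)$, use $a^q\ge(2R)^{q-\kappa}a^\kappa$ for $0\le a\le 2R$ and $q\le\kappa$, and insert this into \eqref{eq:bregman_lb}; your explicit constant, taking the minimum of the two factors, is a harmless refinement of what the paper leaves implicit.
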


\begin{proof}
Since \(K\subset\overline B_X(0,R)\), the triangle inequality gives
\[
\|m_1-m_2\|_{L^{\bbar}}\le2R,
\qquad
\|u_1-u_2\|_{W^{1,\gbar}}\le2R.
\]
If \(0\le a\le2R\) and \(q\le\kappa\), then
\[
a^q\ge (2R)^{q-\kappa}a^\kappa.
\]
Applying this with \(q=\bbar\) and \(q=\gbar\) in
\eqref{eq:bregman_lb} gives \eqref{eq:norm_comparison}.
\end{proof}

\section{Mirror-type algorithm}\label{sec:algorithms}

Throughout this section, \(K\) denotes the bounded admissible set fixed in
\eqref{eq:bounded_K}. For every fixed \(\eps>0\), the regularized solution
\(z_\eps=(m_\eps,u_\eps)\) is the unique solution of
\(\VI(\mathcal A_\eps,K)\), by Lemma~\ref{lem:bounded_admissible_set}. We define
the mirror iteration directly on this set.

We write \(\lambda_n>0\) for the step size at iteration \(n\); the summability
conditions imposed on \((\lambda_n)\) are stated below.

In a Euclidean analogue, where operator values are identified with primal
variables, the corresponding projected forward step is
\[
x_{n+1}=P_{K}\bigl(x_n-\lambda_n \mathcal A_\eps(x_n)\bigr).
\]
This method is a useful point of comparison. A classical way to stabilize such iterations is to introduce a predictor-corrector structure. The convergence proof below uses the quantitative monotonicity~\eqref{eq:unif_mono} together with the Lipschitz and bounded-set estimates available on \(K\).

In Hilbert spaces, for a monotone operator \(\mathcal A\), this leads to the extragradient method
\[
y_n=P_{K}\bigl(x_n-\lambda_n \mathcal A(x_n)\bigr),
\qquad
x_{n+1}=P_{K}\bigl(x_n-\lambda_n \mathcal A(y_n)\bigr).
\]
The key point is the presence of a predictor \(y_n\) and a second correction
step. For monotone problems, this two-step structure is often more robust than
the single projected forward iteration.

In the present paper, the natural geometry is not Euclidean.
Section~\ref{sec:bregman} introduced the mirror potential \(\Phi\), the
associated Bregman divergence \(\mathcal D_\Phi\), and the constrained mirror
map \(\Mirr_x\) on \(K\). The mirror step
\[
x^+=\Mirr_x(\lambda \xi)
\]
is the Bregman analogue of a projected step, with the Euclidean projection
replaced by the mirror map adapted to the Banach product space.

This suggests using a two-step mirror method in place of a one-step mirror
iteration. We do not use the exact analogue of the classical extragradient
method, since that would require a second operator evaluation at the predictor.
Instead, we freeze the operator value across the two mirror moves. Thus, the
scheme is a frozen-evaluation two-step mirror method: 
it retains the two constrained mirror moves of a predictor-corrector structure,
but not the second operator evaluation of an extragradient method, thus requiring only one new evaluation of
\(\mathcal A_\eps\) per outer iteration. Because the same operator value is
used in both mirror moves, the method is not the classical extragradient
method. In the absence of constraints, and whenever the free mirror translations
are defined, the two free mirror moves collapse to a single free mirror
translation with step size \(2\lambda_n\):
\[
D\Phi(x_{n+1})=D\Phi(x_n)-2\lambda_n\mathcal A_\eps(x_n).
\]
The intermediate constrained mirror step is therefore the feature that
distinguishes the method from a simple step-size rescaling.

For the convergence theory, the step-size sequence
\((\lambda_n)_{n\ge0}\subset(0,\infty)\) is assumed to satisfy
\begin{equation}\label{eq:stepsize_conditions}
\sum_{n=0}^{\infty}\lambda_n=\infty,
\qquad
\sum_{n=0}^{\infty}\lambda_n^{\kappa'}<\infty,
\qquad
\kappa:=\max\{\bbar,\gbar\},
\qquad
\kappa':=\frac{\kappa}{\kappa-1}.
\end{equation}
A concrete family of step-size sequences satisfying
\eqref{eq:stepsize_conditions} is
\[
\lambda_n=(n+1)^{-q}
\]
for any
\[
\frac1{\kappa'}<q\le1.
\]
Note that since \(\kappa \ge 2\), we have \(\kappa' \le 2\), which implies \(1/\kappa' \ge 1/2\), guaranteeing that this interval is always valid and non-empty.

\begin{algorithm}[H]
\caption{Two-step mirror method}
\label{alg:semenov}
\begin{algorithmic}[1]
\REQUIRE \(x_0 \in K\)
\STATE Choose a step-size sequence \((\lambda_n)_{n\ge0}\) satisfying \eqref{eq:stepsize_conditions}
\STATE \(A_0 \gets \mathcal A_\eps(x_0)\)
\FOR{\(n = 0,1,2,\ldots\)}
\STATE \(y_n \gets \Mirr_{x_n}(\lambda_n A_n)\)
\STATE \(x_{n+1} \gets \Mirr_{y_n}(\lambda_n A_n)\)
\STATE \(A_{n+1} \gets \mathcal A_\eps(x_{n+1})\)
\ENDFOR
\end{algorithmic}
\end{algorithm}

Each outer iteration consists of two mirror steps over \(K\), so
\(x_n,y_n\in K\) for all \(n\). The first step produces a predictor \(y_n\)
from the current iterate \(x_n\). The second step starts from
\(y_n\) and applies the same slope information
\[
A_n=\mathcal A_\eps(x_n)
\]
once more. 
Thus the operator is evaluated once at the current outer iterate, and this value
is reused in both mirror updates. This scheme is
analyzed in Section~\ref{sec:convergence}, and a practical finite-dimensional
variant is used later in the numerical experiments.

\section{Convergence analysis}\label{sec:convergence}

Throughout this section, the regularization parameter \(\eps>0\) is fixed, and
Assumptions~\ref{ass:base}, \ref{ass:unifmono}, and~\ref{ass:alg} are in force.
Let
\[
z_\eps^*:=(m_\eps,u_\eps)=z_\eps\in K
\]
denote the unique solution of \(\VI(\mathcal A_\eps,K)\), as given by
Lemma~\ref{lem:bounded_admissible_set}. The goal is to prove strong convergence
in the \(X\)-norm of the two-step mirror method introduced in
Section~\ref{sec:algorithms}.

For reference, we write Algorithm~\ref{alg:semenov} in equation form. Given
\(x_0\in K\), set
\begin{equation}\label{eq:variable_step_algorithm}
A_n:=\mathcal A_\eps(x_n),
\qquad
y_n:=\Mirr_{x_n}(\lambda_n A_n),
\qquad
x_{n+1}:=\Mirr_{y_n}(\lambda_n A_n),
\qquad n\ge0.
\end{equation}
Because both mirror steps are taken over \(K\), all iterates \(x_n\) and
\(y_n\) remain in \(K\).
The proof has three steps. First, the mirror optimality condition gives a
one-step Bregman inequality. Second, the bounded-set Lipschitz estimate and the
mirror-increment bound control the explicit-evaluation defects by a summable
term of order \(\lambda_n^{\kappa'}\). Finally, the resulting quasi-Fejér
estimate and the quantitative monotonicity of \(\mathcal A_\eps\) identify the
strong limit as \(z_\eps^*\).

\subsection{The one-step Bregman estimate}

We first record the geometric estimate behind the descent argument for the
mirror map on \(K\).

\begin{proposition}[One-step mirror inequality]\label{prop:one_step}
Let \(\Phi\) be a mirror potential, let \(\mathcal A:K\to X^*\), let \(w\in K\),
let \(\xi\in X^*\), and let \(\tau>0\).
Set
\[
z^+:=\Mirr_w(\tau\xi).
\]
Then, for every \(\zeta\in K\),
\begin{equation}\label{eq:one_step_general}
\mathcal D_\Phi(\zeta,z^+)
\le
\mathcal D_\Phi(\zeta,w)
-
\mathcal D_\Phi(z^+,w)
+
\tau\pair{\xi}{\zeta-z^+}.
\end{equation}
If, in addition, \(z^*\in K\) solves \(\VI(\mathcal A,K)\), then
\begin{equation}\label{eq:one_step_solution}
\begin{aligned}
\mathcal D_\Phi(z^*,z^+)
&\le
\mathcal D_\Phi(z^*,w)
-
\mathcal D_\Phi(z^+,w)
\\
&\quad
-
\tau
\pair{\mathcal A(z^+)-\mathcal A(z^*)}{z^+-z^*}
+
\tau
\pair{\xi-\mathcal A(z^+)}{z^*-z^+}.
\end{aligned}
\end{equation}
\end{proposition}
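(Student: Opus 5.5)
The plan is to derive \eqref{eq:one_step_general} directly from the variational characterization of the mirror map and the three-point identity, and then to obtain \eqref{eq:one_step_solution} by splitting the pairing term with $\mathcal A(z^+)$ and $\mathcal A(z^*)$.

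For the first inequality, I will apply Lemma~\ref{lem:mirror_step_props}(ii) with $x=w$ and $\xi$ replaced by $\tau\xi$. Since $z^+=\Mirr_w(\tau\xi)$, we have
\[
\pair{D\Phi(z^+)-D\Phi(w)+\tau\xi}{\zeta-z^+}\ge0
\qquad\text{for all }\zeta\in K.
\]
Next I will use the three-point identity \eqref{eq:three_point} with $a=\zeta$, $b=z^+$, $c=w$:
\[
\pair{D\Phi(z^+)-D\Phi(w)}{\zeta-z^+}
=
\mathcal D_\Phi(\zeta,w)-\mathcal D_\Phi(\zeta,z^+)-\mathcal D_\Phi(z^+,w).
\]
Substituting this into the optimality inequality gives $\mathcal D_\Phi(\zeta,w)-\mathcal D_\Phi(\zeta,z^+)-\mathcal D_\Phi(z^+,w)+\tau\pair{\xi}{\zeta-z^+}\ge0$. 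Rearranged, this is exactly \eqref{eq:one_step_general}. No properties of $\mathcal A$ enter at this stage.

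For the second inequality, I will take $\zeta=z^*\in K$ in \eqref{eq:one_step_general} and decompose
\[
\tau\pair{\xi}{z^*-z^+}
=
\tau\pair{\xi-\mathcal A(z^+)}{z^*-z^+}
+\tau\pair{\mathcal A(z^+)-\mathcal A(z^*)}{z^*-z^+}
+\tau\pair{\mathcal A(z^*)}{z^*-z^+}.
\]
The middle term equals $-\tau\pair{\mathcal A(z^+)-\mathcal A(z^*)}{z^+-z^*}$. The last term is $\le0$, because $z^*$ solves $\VI(\mathcal A,K)$ and $z^+\in K$ give $\pair{\mathcal A(z^*)}{z^+-z^*}\ge0$. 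Dropping it yields \eqref{eq:one_step_solution}.

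There is no genuine obstacle here; the argument is purely algebraic once (ii) of Lemma~\ref{lem:mirror_step_props} is available. The only points to check are the sign conventions in the three-point identity and that $z^+\in K$. The latter holds because the mirror map minimizes over $K$, so that the VI for $z^*$ may be tested at $z^+$.
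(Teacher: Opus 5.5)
Your proposal is correct and follows essentially the same route as the paper: the optimality condition from Lemma~\ref{lem:mirror_step_props}(ii) combined with the three-point identity \eqref{eq:three_point} at $(a,b,c)=(\zeta,z^+,w)$ gives \eqref{eq:one_step_general}, and testing the VI at $z^+\in K$ gives \eqref{eq:one_step_solution}. The only difference is cosmetic. You split $\tau\pair{\xi}{z^*-z^+}$ into three terms at once, whereas the paper splits off $\xi-\mathcal A(z^+)$ first and then bounds $\pair{\mathcal A(z^+)}{z^*-z^+}$ using the VI.
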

\begin{proof}
By Lemma~\ref{lem:mirror_step_props}, the optimality condition for
\(z^+=\Mirr_w(\tau\xi)\) gives
\[
\pair{D\Phi(z^+)-D\Phi(w)+\tau\xi}{\zeta-z^+}\ge0
\qquad
\text{for all }\zeta\in K.
\]
Using the three-point identity \eqref{eq:three_point} with
\((a,b,c)=(\zeta,z^+,w)\), we obtain
\[
\mathcal D_\Phi(\zeta,w)
-
\mathcal D_\Phi(\zeta,z^+)
-
\mathcal D_\Phi(z^+,w)
=
\pair{D\Phi(z^+)-D\Phi(w)}{\zeta-z^+}.
\]
Using this identity in the optimality condition gives
\eqref{eq:one_step_general}.

Adding and subtracting
\(\mathcal A(z^+)\), we get
\begin{equation}
\label{eq:xi_decomposition}
\pair{\xi}{z^*-z^+}
=
\pair{\xi-\mathcal A(z^+)}{z^*-z^+}
+
\pair{\mathcal A(z^+)}{z^*-z^+}.
\end{equation}
Since \(z^*\) solves \(\VI(\mathcal A,K)\) and \(z^+\in K\),
\[
\pair{\mathcal A(z^*)}{z^+-z^*}\ge0,
\]
and, therefore,
\[
\pair{\mathcal A(z^+)}{z^*-z^+}
\le
-
\pair{\mathcal A(z^+)-\mathcal A(z^*)}{z^+-z^*}.
\]
Taking \(\zeta=z^*\) in \eqref{eq:one_step_general} and using
\eqref{eq:xi_decomposition} together with the preceding inequality yields
\eqref{eq:one_step_solution}.
\end{proof}

Inequality \eqref{eq:one_step_solution} tracks the evolution of the Bregman
divergence from a solution \(z^*\). The last two terms on the right-hand side
separate the monotone and explicit parts of the update. When \(\mathcal A\) is
monotone, the term
\(-\tau\pair{\mathcal A(z^+)-\mathcal A(z^*)}{z^+-z^*}\) is nonpositive and
therefore provides descent. The final term accounts for the algorithmic
mismatch introduced by the explicit evaluation: the applied step direction \(\xi\) differs from the true operator value \(\mathcal A(z^+)\) at the new point. In the frozen-operator method \eqref{eq:variable_step_algorithm}, the direction is fixed at \(\xi = \mathcal A_\eps(x_n)\). Therefore, this mismatch evaluates to
\[
\mathcal A_\eps(x_n)-\mathcal A_\eps(y_n)
\]
for the predictor step, and
\[
\mathcal A_\eps(x_n)-\mathcal A_\eps(x_{n+1})
\]
for the corrector step.

\subsection{Mirror-increment estimates}

We now collect the estimates needed to control the mirror increments and the
explicit-evaluation terms in the descent argument. All constants in this
subsection may depend on \(R\), on the fixed parameter \(\eps\), and on the
structural constants, but not on the iteration index \(n\) or on the particular
points being estimated.

Lemma~\ref{lem:norm_comparison} and the inequality
\[
a^\kappa+b^\kappa\ge 2^{1-\kappa}(a+b)^\kappa
\qquad
\text{for all }a,b\ge0
\]
imply that, after decreasing the constant if necessary, there exists
\(\sigma_R>0\) such that
\begin{equation}\label{eq:bounded_bregman}
\mathcal D_\Phi(z_1,z_2)
\ge
\sigma_R\|z_1-z_2\|_X^\kappa
\qquad
\text{for all }z_1,z_2\in K.
\end{equation}

By Theorem~\ref{thm:lip}, there exists \(L_{R,\eps}>0\) such that
\begin{equation}\label{eq:Lipschitz_REps}
\|\mathcal A_\eps(z_1)-\mathcal A_\eps(z_2)\|_{X^*}
\le
L_{R,\eps}\|z_1-z_2\|_X
\qquad
\text{for all }z_1,z_2\in K.
\end{equation}
Since \(z_\eps^*\in K\), \eqref{eq:Lipschitz_REps} implies that
\(\mathcal A_\eps\) is bounded on \(K\). We set
\[
\Xi_{R,\eps}
:=
2R L_{R,\eps}
+
\|\mathcal A_\eps(z_\eps^*)\|_{X^*}.
\]
Then, for every \(z\in K\),
\begin{equation}\label{eq:operator_bound_REps}
\|\mathcal A_\eps(z)\|_{X^*}
\le
\Xi_{R,\eps}.
\end{equation}

The next estimate controls the size of one mirror increment.

\begin{lemma}[Size of one mirror increment]\label{lem:mirror_increment}
Let \(w\in K\), let \(\xi\in X^*\) satisfy
\[
\|\xi\|_{X^*}\le\Xi_{R,\eps},
\]
and let \(\tau>0\). Let
\[
z^+:=\Mirr_w(\tau\xi).
\]
Then,
\begin{equation}\label{eq:mirror_increment}
\|z^+-w\|_X
\le
\left(
\frac{\tau\Xi_{R,\eps}}{\sigma_R}
\right)^{1/(\kappa-1)}.
\end{equation}
\end{lemma}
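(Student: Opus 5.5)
The plan is to apply the general one-step inequality \eqref{eq:one_step_general} with the comparison point \(\zeta=w\in K\), and then combine it with the uniform lower bound \eqref{eq:bounded_bregman}. Since \(\mathcal D_\Phi(w,w)=0\), taking \(\zeta=w\) in \eqref{eq:one_step_general} gives
\[
\mathcal D_\Phi(w,z^+)+\mathcal D_\Phi(z^+,w)
\le
\tau\pair{\xi}{w-z^+}.
\]
This is also what one gets by adding the optimality condition \eqref{eq:projection_characterization} at \(y=w\) to the symmetric form of the three-point identity. The left-hand side is the symmetrized Bregman divergence, which equals \(\pair{D\Phi(w)-D\Phi(z^+)}{w-z^+}\).

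Next, both \(w\) and \(z^+\) lie in \(K\), because \(z^+=\Mirr_w(\tau\xi)\) is a minimizer over \(K\). So \eqref{eq:bounded_bregman} applies to each term on the left, and we get
\[
2\sigma_R\|z^+-w\|_X^\kappa
\le
\tau\|\xi\|_{X^*}\|z^+-w\|_X
\le
\tau\Xi_{R,\eps}\|z^+-w\|_X.
\]
If \(z^+=w\), the claim is trivial. Otherwise we divide by \(\|z^+-w\|_X>0\) and obtain \(\|z^+-w\|_X^{\kappa-1}\le \tau\Xi_{R,\eps}/(2\sigma_R)\). Since \(\kappa>2>1\), we can take the \((\kappa-1)\)-th root, and the resulting bound is even stronger than \eqref{eq:mirror_increment}, by a factor \(2^{-1/(\kappa-1)}\). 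If one prefers to use only a single Bregman term, dropping \(\mathcal D_\Phi(w,z^+)\ge0\) gives exactly the stated constant.

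The only point that needs care is that \eqref{eq:bounded_bregman} requires both arguments to lie in the bounded set \(K\), and this is what makes the single exponent \(\kappa\) and the constant \(\sigma_R\) available. This holds by construction here. No Lipschitz estimate or monotonicity of \(\mathcal A_\eps\) is needed; the step is purely geometric and uses only the optimality characterization from Lemma~\ref{lem:mirror_step_props} and the duality bound \(|\pair{\xi}{h}|\le\|\xi\|_{X^*}\|h\|_X\).
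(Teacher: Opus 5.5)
Your proof is correct and follows essentially the paper's route: test against the competitor $w\in K$, bound the pairing by $\tau\Xi_{R,\eps}\|z^+-w\|_X$, invoke \eqref{eq:bounded_bregman}, and divide. The only difference is in how you test against $w$. The paper compares objective values in \eqref{eq:mirror_step} directly and so has only $\mathcal D_\Phi(z^+,w)$ on the left. You use the first-order condition through \eqref{eq:one_step_general}, which also keeps $\mathcal D_\Phi(w,z^+)$ and yields the slightly sharper bound $\|z^+-w\|_X^{\kappa-1}\le\tau\Xi_{R,\eps}/(2\sigma_R)$.
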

\begin{proof}
By the definition of the mirror map, \(z^+\) minimizes
\[
y\mapsto \tau\pair{\xi}{y}+\mathcal D_\Phi(y,w)
\]
over \(K\). Taking \(w\in K\) as a competitor and using
\(\mathcal D_\Phi(w,w)=0\), we obtain
\[
\tau\pair{\xi}{z^+}
+
\mathcal D_\Phi(z^+,w)
\le
\tau\pair{\xi}{w}.
\]
Hence
\[
\mathcal D_\Phi(z^+,w)
\le
\tau\pair{\xi}{w-z^+}
\le
\tau\|\xi\|_{X^*}\|z^+-w\|_X
\le
\tau\Xi_{R,\eps}\|z^+-w\|_X.
\]
Since \(z^+,w\in K\), \eqref{eq:bounded_bregman} gives
\[
\sigma_R\|z^+-w\|_X^\kappa
\le
\mathcal D_\Phi(z^+,w).
\]
If \(z^+=w\), the conclusion is trivial. Otherwise, dividing by
\(\|z^+-w\|_X\) gives
\[
\sigma_R\|z^+-w\|_X^{\kappa-1}
\le
\tau\Xi_{R,\eps},
\]
which proves \eqref{eq:mirror_increment}.
\end{proof}

In the algorithm, \(\tau=\lambda_n\) and the directions are uniformly bounded
by \eqref{eq:operator_bound_REps}. Hence Lemma~\ref{lem:mirror_increment} gives
\[
\|y_n-x_n\|_X+\|x_{n+1}-y_n\|_X
\le
C_{R,\eps}\lambda_n^{1/(\kappa-1)}.
\]
Combined with the Lipschitz estimate \eqref{eq:Lipschitz_REps}, this shows that
the explicit-evaluation defects in the descent inequality are bounded by
\[
C_{R,\eps}\lambda_n
\bigl(
\|y_n-x_n\|_X+\|x_{n+1}-y_n\|_X
\bigr)
\le
C_{R,\eps}\lambda_n^{\kappa'}.
\]
This is the summable error term in the quasi-Fejér argument below.

\subsection{The two-step descent estimate}

Define the fixed-\(\eps\) error functional
\begin{equation}\label{eq:energy_eps}
\mathcal E_\eps(z)
:=
\|m-m_\eps\|_{L^{\bbar}}^{\bbar}
+
\eps\|u-u_\eps\|_{W^{1,\gbar}}^{\gbar},
\qquad
z=(m,u)\in K.
\end{equation}
By the quantitative monotonicity estimate \eqref{eq:unif_mono}, there exists 
\(c_{\mathrm{mon}}>0\), independent of \(z\), such that
\begin{equation}\label{eq:energy_mono} 
\pair{\mathcal A_\eps(z)-\mathcal A_\eps(z_\eps^*)}{z-z_\eps^*} \ge c_{\mathrm{mon}}\,\mathcal E_\eps(z) \qquad \text{for all }z\in K. 
\end{equation}

The next estimate is the discrete Lyapunov inequality for the two-step method.
It compares the Bregman distance to the solution after one full iteration with
the corresponding distance before the iteration. The two mirror moves produce
the increment terms
\(\|y_n-x_n\|_X^\kappa\) and \(\|x_{n+1}-y_n\|_X^\kappa\), while the
quantitative monotonicity of \(\mathcal A_\eps\) gives the dissipative term
\(\lambda_n\mathcal E_\eps(x_{n+1})\). The only loss comes from the explicit
frozen evaluation \(A_n=\mathcal A_\eps(x_n)\); the Lipschitz and
mirror-increment estimates show that this loss is bounded by the summable
quantity \(C_{R,\eps}\lambda_n^{\kappa'}\).

\begin{proposition}[Two-step descent inequality]\label{prop:two_step_estimate}
Let \((x_n,y_n)\) be generated by \eqref{eq:variable_step_algorithm}. Then
there exists \(C_{R,\eps}>0\) such that, for every \(n\ge0\),
\begin{align}\label{eq:two_step_estimate}
\mathcal D_\Phi(z_\eps^*,x_{n+1})
&+
\sigma_R\|y_n-x_n\|_X^\kappa
+
\sigma_R\|x_{n+1}-y_n\|_X^\kappa
+
c_{\mathrm{mon}}\lambda_n\mathcal E_\eps(x_{n+1})
\notag\\
&\le
\mathcal D_\Phi(z_\eps^*,x_n)
+
C_{R,\eps}\lambda_n^{\kappa'}.
\end{align}
\end{proposition}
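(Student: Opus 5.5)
The plan is to apply Proposition~\ref{prop:one_step} twice, once to each mirror move, and add the resulting inequalities. For the predictor step we take \(w=x_n\), \(\xi=A_n\), \(\tau=\lambda_n\), \(z^+=y_n\), and use only the general form \eqref{eq:one_step_general} with \(\zeta=z_\eps^*\):
\[
\mathcal D_\Phi(z_\eps^*,y_n)\le \mathcal D_\Phi(z_\eps^*,x_n)-\mathcal D_\Phi(y_n,x_n)+\lambda_n\pair{A_n}{z_\eps^*-y_n}.
\]
For the corrector step we take \(w=y_n\), \(z^+=x_{n+1}\), and use the solution form \eqref{eq:one_step_solution}:
\[
\mathcal D_\Phi(z_\eps^*,x_{n+1})\le \mathcal D_\Phi(z_\eps^*,y_n)-\mathcal D_\Phi(x_{n+1},y_n)-\lambda_n\pair{\mathcal A_\eps(x_{n+1})-\mathcal A_\eps(z_\eps^*)}{x_{n+1}-z_\eps^*}+\lambda_n\pair{A_n-\mathcal A_\eps(x_{n+1})}{z_\eps^*-x_{n+1}}.
\]
Adding, the intermediate \(\mathcal D_\Phi(z_\eps^*,y_n)\) cancels. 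The monotone term is bounded below by \(c_{\mathrm{mon}}\lambda_n\mathcal E_\eps(x_{n+1})\) via \eqref{eq:energy_mono}. The two increment divergences are bounded below by \(\sigma_R\|y_n-x_n\|_X^\kappa\) and \(\sigma_R\|x_{n+1}-y_n\|_X^\kappa\) via \eqref{eq:bounded_bregman}, since all points lie in \(K\).

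The remaining predictor term \(\lambda_n\pair{A_n}{z_\eps^*-y_n}\) is not yet in mismatch form. I would rewrite it as
\[
\lambda_n\pair{A_n-\mathcal A_\eps(y_n)}{z_\eps^*-y_n}+\lambda_n\pair{\mathcal A_\eps(y_n)}{z_\eps^*-y_n}.
\]
The second piece is \(\le -\lambda_n\pair{\mathcal A_\eps(y_n)-\mathcal A_\eps(z_\eps^*)}{y_n-z_\eps^*}\le 0\), because \(z_\eps^*\) solves the VI and \(y_n\in K\). Dropping this nonpositive monotone term, we are left with two defects:
\[
\lambda_n\pair{A_n-\mathcal A_\eps(y_n)}{z_\eps^*-y_n},\qquad \lambda_n\pair{A_n-\mathcal A_\eps(x_{n+1})}{z_\eps^*-x_{n+1}}.
\]

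To bound the defects, note that \(\|z_\eps^*-y_n\|_X,\|z_\eps^*-x_{n+1}\|_X\le 2R\). By \eqref{eq:Lipschitz_REps}, the first defect is at most \(2RL_{R,\eps}\lambda_n\|x_n-y_n\|_X\). For the second, the triangle inequality gives at most \(2RL_{R,\eps}\lambda_n(\|x_n-y_n\|_X+\|y_n-x_{n+1}\|_X)\).

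Since \(\|A_n\|_{X^*}\le\Xi_{R,\eps}\) by \eqref{eq:operator_bound_REps}, Lemma~\ref{lem:mirror_increment} applies to both moves (with \(w=x_n\) and with \(w=y_n\)). It gives each increment \(\le C\lambda_n^{1/(\kappa-1)}\). Hence both defects are \(\le C_{R,\eps}\lambda_n^{1+1/(\kappa-1)}=C_{R,\eps}\lambda_n^{\kappa'}\), which yields \eqref{eq:two_step_estimate}.

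The main obstacle is keeping the full increment terms \(\sigma_R\|\cdot\|^\kappa\) on the left while also bounding the defects. The route above sidesteps any Young-inequality absorption by using the a priori increment bound directly, so nothing needs to be absorbed. The one point to check carefully is that the bounded diameter \(2R\) of \(K\) (rather than the increments themselves) is what multiplies the Lipschitz difference. That is what produces the power \(\kappa'\) rather than something better or worse.
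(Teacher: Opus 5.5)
Your proposal is correct and follows essentially the same route as the paper. You apply Proposition~\ref{prop:one_step} to both mirror moves, drop the nonpositive monotone contribution at \(y_n\), and use the lower bounds \eqref{eq:bounded_bregman} and \eqref{eq:energy_mono}; you then control the two frozen-evaluation defects by the \(2R\) diameter, the Lipschitz estimate, and Lemma~\ref{lem:mirror_increment} to reach \(C_{R,\eps}\lambda_n^{\kappa'}\). The only cosmetic difference is in the predictor step: you start from \eqref{eq:one_step_general} and redo the mismatch split by hand, whereas the paper obtains exactly that split as \eqref{eq:one_step_solution} and applies it directly to both moves.
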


\begin{proof}
Set \(A_n:=\mathcal A_\eps(x_n)\). Since both mirror steps are taken over
\(K\), we have
\[
x_n,y_n,x_{n+1}\in K.
\]
By Lemma~\ref{lem:bounded_admissible_set}, \(z_\eps^*\) solves
\(\VI(\mathcal A_\eps,K)\).

Apply Proposition~\ref{prop:one_step} with \(\mathcal A=\mathcal A_\eps\), \(w=x_n\),
\(\xi=A_n\), \(\tau=\lambda_n\), and \(z^+=y_n\). This gives
\begin{align}
\mathcal D_\Phi(z_\eps^*,y_n)
&\le
\mathcal D_\Phi(z_\eps^*,x_n)
-
\mathcal D_\Phi(y_n,x_n)
\notag\\
&\quad
-
\lambda_n
\pair{\mathcal A_\eps(y_n)-\mathcal A_\eps(z_\eps^*)}{y_n-z_\eps^*}
+
\lambda_n
\pair{A_n-\mathcal A_\eps(y_n)}{z_\eps^*-y_n}.
\label{eq:first_step_estimate}
\end{align}
Apply the same estimate with \(w=y_n\), \(\xi=A_n\), and
\(z^+=x_{n+1}\). Then
\begin{align}
\mathcal D_\Phi(z_\eps^*,x_{n+1})
&\le
\mathcal D_\Phi(z_\eps^*,y_n)
-
\mathcal D_\Phi(x_{n+1},y_n)
\notag\\
&\quad
-
\lambda_n
\pair{\mathcal A_\eps(x_{n+1})-\mathcal A_\eps(z_\eps^*)}
{x_{n+1}-z_\eps^*}
+
\lambda_n
\pair{A_n-\mathcal A_\eps(x_{n+1})}{z_\eps^*-x_{n+1}}.
\label{eq:second_step_estimate}
\end{align}
Substituting \eqref{eq:first_step_estimate} into
\eqref{eq:second_step_estimate}, and using the monotonicity of
\(\mathcal A_\eps\) to drop the subtracted nonnegative term at \(y_n\), gives
\begin{align}
\mathcal D_\Phi(z_\eps^*,x_{n+1})
&+
\mathcal D_\Phi(y_n,x_n)
+
\mathcal D_\Phi(x_{n+1},y_n)
\notag\\
&+
\lambda_n
\pair{\mathcal A_\eps(x_{n+1})-\mathcal A_\eps(z_\eps^*)}
{x_{n+1}-z_\eps^*}
\notag\\
&\le
\mathcal D_\Phi(z_\eps^*,x_n)
+
\lambda_n
\pair{A_n-\mathcal A_\eps(y_n)}{z_\eps^*-y_n}
\notag\\
&\quad
+
\lambda_n
\pair{A_n-\mathcal A_\eps(x_{n+1})}{z_\eps^*-x_{n+1}}.
\label{eq:before_bounds}
\end{align}

Concerning the left-hand side of the preceding inequality, the Bregman terms are bounded below by \eqref{eq:bounded_bregman}:
\[
\mathcal D_\Phi(y_n,x_n)
\ge
\sigma_R\|y_n-x_n\|_X^\kappa,
\qquad
\mathcal D_\Phi(x_{n+1},y_n)
\ge
\sigma_R\|x_{n+1}-y_n\|_X^\kappa.
\]
Furthermore, by \eqref{eq:energy_mono}, the monotonicity term on the left-hand side is bounded below by the error functional:
\[
\lambda_n \pair{\mathcal A_\eps(x_{n+1})-\mathcal A_\eps(z_\eps^*)}{x_{n+1}-z_\eps^*} \ge c_{\mathrm{mon}} \lambda_n \mathcal E_\eps(x_{n+1}).
\]

It remains to estimate the two explicit defects. By the Lipschitz estimate
\eqref{eq:Lipschitz_REps},
\[
\|A_n-\mathcal A_\eps(y_n)\|_{X^*}
\le
L_{R,\eps}\|x_n-y_n\|_X,
\]
and
\[
\|A_n-\mathcal A_\eps(x_{n+1})\|_{X^*}
\le
L_{R,\eps}\|x_n-x_{n+1}\|_X
\le
L_{R,\eps}
\bigl(
\|x_n-y_n\|_X+\|x_{n+1}-y_n\|_X
\bigr).
\]
Since \(z_\eps^*,y_n,x_{n+1}\in K\),
\[
\|z_\eps^*-y_n\|_X\le2R,
\qquad
\|z_\eps^*-x_{n+1}\|_X\le2R.
\]
Bounding the duality pairings above by their absolute values and using these
estimates, we obtain
\begin{align}
&\lambda_n
\pair{A_n-\mathcal A_\eps(y_n)}{z_\eps^*-y_n}
+
\lambda_n
\pair{A_n-\mathcal A_\eps(x_{n+1})}{z_\eps^*-x_{n+1}}
\notag\\
&\qquad
\le
2R L_{R,\eps}\lambda_n
\left(
2\|x_n-y_n\|_X
+
\|x_{n+1}-y_n\|_X
\right).
\label{eq:defect_bound_1}
\end{align}

By \eqref{eq:operator_bound_REps},
\[
\|A_n\|_{X^*}
=
\|\mathcal A_\eps(x_n)\|_{X^*}
\le
\Xi_{R,\eps}.
\]
Applying Lemma~\ref{lem:mirror_increment} to the two mirror steps gives
\[
\|y_n-x_n\|_X
\le
\left(
\frac{\lambda_n\Xi_{R,\eps}}{\sigma_R}
\right)^{1/(\kappa-1)},
\]
and
\[
\|x_{n+1}-y_n\|_X
\le
\left(
\frac{\lambda_n\Xi_{R,\eps}}{\sigma_R}
\right)^{1/(\kappa-1)}.
\]
Recalling that \(1+1/(\kappa-1)=\kappa'\), substituting these bounds into \eqref{eq:defect_bound_1} gives
\[
\lambda_n
\pair{A_n-\mathcal A_\eps(y_n)}{z_\eps^*-y_n}
+
\lambda_n
\pair{A_n-\mathcal A_\eps(x_{n+1})}{z_\eps^*-x_{n+1}}
\le
C_{R,\eps}\lambda_n^{\kappa'}.
\]
Substituting this bound, together with \eqref{eq:bounded_bregman} and
\eqref{eq:energy_mono}, into \eqref{eq:before_bounds} proves
\eqref{eq:two_step_estimate}.
\end{proof}

\subsection{Strong convergence for fixed \texorpdfstring{\(\eps>0\)}{epsilon > 0}}


We now pass from the descent estimate to convergence using a quasi-Fej\'er
argument.

\begin{lemma}[Quasi-Fej\'er summability]\label{lem:quasi_fejer}
Let \((E_n)\), \((a_n)\), \((b_n)\), and \((e_n)\) be nonnegative sequences, and
let \((\lambda_n)\subset(0,\infty)\). Suppose that
\begin{equation}\label{eq:quasi_fejer}
E_{n+1}+a_n+\lambda_n b_n
\le
E_n+e_n
\qquad
\text{for all }n\ge0.
\end{equation}
If
\[
\sum_{n=0}^{\infty}e_n<\infty,
\]
then \((E_n)\) converges and
\[
\sum_{n=0}^{\infty}a_n<\infty,
\qquad
\sum_{n=0}^{\infty}\lambda_n b_n<\infty.
\]
If, in addition,
\[
\sum_{n=0}^{\infty}\lambda_n=\infty,
\]
then
\[
\liminf_{n\to\infty} b_n=0.
\]
\end{lemma}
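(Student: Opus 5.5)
The plan is the standard telescoping argument for quasi-Fejér sequences. Since \(a_n\ge0\) and \(\lambda_nb_n\ge0\), inequality \eqref{eq:quasi_fejer} implies \(E_{n+1}\le E_n+e_n\). Set
\[
F_n:=E_n+\sum_{k\ge n}e_k,
\]
which is finite because \(\sum e_k<\infty\). Then
\[
F_{n+1}=E_{n+1}+\sum_{k\ge n+1}e_k\le E_n+e_n+\sum_{k\ge n+1}e_k=F_n,
\]
so \((F_n)\) is nonincreasing. It is also bounded below by \(0\), so it converges. The tails \(\sum_{k\ge n}e_k\) tend to zero, hence \(E_n=F_n-\sum_{k\ge n}e_k\) converges to the same limit.

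For the summability statements, sum \eqref{eq:quasi_fejer} from \(n=0\) to \(N\) and telescope:
\[
E_{N+1}+\sum_{n=0}^{N}\bigl(a_n+\lambda_nb_n\bigr)\le E_0+\sum_{n=0}^{N}e_n\le E_0+\sum_{n=0}^{\infty}e_n.
\]
We have \(E_{N+1}\ge0\), and every summand on the left is nonnegative. The partial sums of \(\sum a_n\) and of \(\sum\lambda_nb_n\) are therefore bounded uniformly in \(N\). Since both are series of nonnegative terms, both converge.

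For the final claim, argue by contradiction. Suppose \(\liminf_{n\to\infty}b_n>0\). Then there are \(\delta>0\) and \(n_0\) such that \(b_n\ge\delta\) for all \(n\ge n_0\). This gives
\[
\sum_{n\ge n_0}\lambda_nb_n\ge\delta\sum_{n\ge n_0}\lambda_n=\infty,
\]
because \(\sum\lambda_n=\infty\) and removing finitely many terms does not change divergence. This contradicts \(\sum\lambda_nb_n<\infty\), so \(\liminf b_n=0\).

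None of these steps is a real obstacle. The only point needing care is convergence of \(E_n\), as opposed to mere boundedness, and the auxiliary nonincreasing sequence \(F_n\) takes care of that.
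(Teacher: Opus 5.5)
Your proposal is correct and follows essentially the same route as the paper: the tail-corrected sequence \(E_n+\sum_{k\ge n}e_k\) is nonincreasing and bounded below, the summability follows by telescoping, and \(\liminf b_n=0\) follows by contradiction with \(\sum\lambda_n=\infty\). The only cosmetic difference is that the paper telescopes the inequality for the corrected sequence, whereas you telescope the original inequality directly; the two are equivalent.
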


\begin{proof}
Let
\[
s_n:=\sum_{j=n}^{\infty}e_j.
\]
Then \(s_n\ge0\), \(s_n\to0\), and \(s_n-s_{n+1}=e_n\). Define
\[
\widetilde E_n:=E_n+s_n.
\]
Using \eqref{eq:quasi_fejer}, we find
\[
\widetilde E_{n+1}+a_n+\lambda_n b_n
=
E_{n+1}+s_{n+1}+a_n+\lambda_n b_n
\le
E_n+e_n+s_{n+1}
=
E_n+s_n
=
\widetilde E_n.
\]
Thus \((\widetilde E_n)\) is nonincreasing and bounded below, hence convergent.
Summing this estimate from \(n=0\) to \(N\) and using the telescoping of
\((\widetilde E_n)\), we obtain
\[
\sum_{n=0}^{N}a_n
+
\sum_{n=0}^{N}\lambda_n b_n
\le
\widetilde E_0-\widetilde E_{N+1}
\le
\widetilde E_0.
\]
Letting \(N\to\infty\) gives
\[
\sum_{n=0}^{\infty}a_n<\infty,
\qquad
\sum_{n=0}^{\infty}\lambda_n b_n<\infty.
\]
Since \(s_n\to0\), the sequence \((E_n)\) also converges.

If \(\sum_{n=0}^\infty\lambda_n=\infty\) and \(\liminf_{n\to\infty} b_n>0\), then \(\sum_{n=0}^\infty\lambda_n b_n=\infty\), contradicting the summability above. Hence \(\liminf_{n\to\infty} b_n=0\).
\end{proof}

\begin{theorem}[Strong convergence of the two-step mirror method]
\label{thm:main_convergence}
Suppose Assumptions~\ref{ass:base}, \ref{ass:unifmono}, and~\ref{ass:alg} hold. Fix
\(\eps>0\), let \(z_\eps^*\) be the unique solution of
\(\VI(\mathcal A_\eps,K)\), let \(x_0\in K\), and let \((x_n,y_n)\) be
generated by the two-step mirror method \eqref{eq:variable_step_algorithm}. If
the step-size sequence satisfies
\eqref{eq:stepsize_conditions}, then
\[
x_n\to z_\eps^*
\qquad
\text{and}
\qquad
y_n\to z_\eps^*
\]
strongly in \(X\).
\end{theorem}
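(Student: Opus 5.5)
We will feed the two-step descent inequality of Proposition~\ref{prop:two_step_estimate} into the quasi-Fej\'er Lemma~\ref{lem:quasi_fejer}, with
\[
E_n:=\mathcal D_\Phi(z_\eps^*,x_n),\qquad
a_n:=\sigma_R\bigl(\|y_n-x_n\|_X^\kappa+\|x_{n+1}-y_n\|_X^\kappa\bigr),
\]
\[
b_n:=c_{\mathrm{mon}}\mathcal E_\eps(x_{n+1}),\qquad
e_n:=C_{R,\eps}\lambda_n^{\kappa'}.
\]
By \eqref{eq:stepsize_conditions}, $\sum e_n<\infty$. The lemma then yields three facts: $E_n$ converges to some $E_\infty\ge0$; $\sum a_n<\infty$, so $\|y_n-x_n\|_X\to0$ and $\|x_{n+1}-y_n\|_X\to0$; and, since $\sum\lambda_n=\infty$, $\liminf_n\mathcal E_\eps(x_{n+1})=0$.

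Since $\eps>0$ is fixed, $\mathcal E_\eps(z)\ge \min\{1,\eps\}\,\bigl(\|m-m_\eps\|_{L^{\bbar}}^{\bbar}+\|u-u_\eps\|_{W^{1,\gbar}}^{\gbar}\bigr)$. Hence there is a subsequence $x_{n_k+1}\to z_\eps^*$ strongly in $X$.

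The key step is upgrading this subsequential convergence to convergence of the whole sequence. For this we need the Bregman divergence $\mathcal D_\Phi(z_\eps^*,\cdot)$ to be small at points near $z_\eps^*$, i.e. an upper bound complementing \eqref{eq:bounded_bregman}. We will prove that $y\mapsto\mathcal D_\Phi(z_\eps^*,y)$ is continuous in the $X$-norm at $y=z_\eps^*$. Write
\[
\mathcal D_\Phi(z^*,y)=\Phi(z^*)-\Phi(y)-\pair{D\Phi(y)}{z^*-y}.
\]
Then
\[
|\mathcal D_\Phi(z^*,y)|\le|\Phi(z^*)-\Phi(y)|+\|D\Phi(y)\|_{X^*}\|z^*-y\|_X .
\]
The first term tends to zero because $\Phi$ is norm continuous: it is a sum of $L^{\bbar}$ and $L^{\gbar}$ norms raised to powers. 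The second term tends to zero because $D\Phi$ is bounded on $K$ by \eqref{eq:grad_phi_m}--\eqref{eq:grad_phi_u}; for instance $\| |m|^{\bbar-2}m\|_{L^{\bbar'}}=\|m\|_{L^{\bbar}}^{\bbar-1}\le R^{\bbar-1}$, and similarly for the $u$-block. Consequently $E_{n_k+1}\to0$. Since $(E_n)$ converges, $E_\infty=0$, i.e. $E_n\to0$ along the full sequence.

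Finally, applying \eqref{eq:bounded_bregman} with $z_1=z_\eps^*$ and $z_2=x_n$ (both in $K$) gives
\[
\sigma_R\|x_n-z_\eps^*\|_X^\kappa\le E_n\to0,
\]
so $x_n\to z_\eps^*$ strongly. Moreover $\|y_n-z_\eps^*\|_X\le\|y_n-x_n\|_X+\|x_n-z_\eps^*\|_X\to0$, using $a_n\to0$. The main obstacle is the continuity step: the Fej\'er argument only controls $E_n$ through the liminf of the dissipation. We avoid needing any weak-to-strong upgrade because quantitative monotonicity with fixed $\eps>0$ already delivers strong subsequential convergence in both components. Only the elementary norm-continuity of $\mathcal D_\Phi(z_\eps^*,\cdot)$ on the bounded set $K$ has to be checked.
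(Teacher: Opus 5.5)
Your proposal is correct and follows the paper's proof essentially step for step. It uses the same quasi-Fej\'er data $E_n,a_n,b_n,e_n$, extracts a strongly convergent subsequence from $\liminf\mathcal E_\eps(x_{n+1})=0$, uses continuity of $\mathcal D_\Phi(z_\eps^*,\cdot)$ to force $E_n\to0$, then applies the Bregman lower bound and the triangle inequality for $y_n$. The only minor difference is that you prove continuity of $\mathcal D_\Phi(z_\eps^*,\cdot)$ only at $z_\eps^*$, via boundedness of $D\Phi$ on $K$, whereas the paper invokes norm-continuity of the Nemytskii maps defining $D\Phi$; your version is slightly more elementary and suffices here.
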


\begin{proof}
Apply Proposition~\ref{prop:two_step_estimate} and
Lemma~\ref{lem:quasi_fejer} with
\[
E_n:=\mathcal D_\Phi(z_\eps^*,x_n),
\]
\[
a_n:=
\sigma_R\|y_n-x_n\|_X^\kappa
+
\sigma_R\|x_{n+1}-y_n\|_X^\kappa,
\]
\[
b_n:=c_{\mathrm{mon}}\mathcal E_\eps(x_{n+1}),
\qquad
e_n:=C_{R,\eps}\lambda_n^{\kappa'}.
\]
Since \(\sum_n \lambda_n^{\kappa'}<\infty\), we have
\[
\sum_{n=0}^{\infty} e_n<\infty,
\]
and therefore the sequence
\(\mathcal D_\Phi(z_\eps^*,x_n)\) converges. Since
\(\sigma_R>0\) and \(c_{\mathrm{mon}}>0\), the corresponding summability statements
give
\[
\sum_{n=0}^{\infty}
\left(
\|y_n-x_n\|_X^\kappa
+
\|x_{n+1}-y_n\|_X^\kappa
\right)
<\infty,
\]
and
\[
\sum_{n=0}^{\infty}
\lambda_n\mathcal E_\eps(x_{n+1})
<\infty.
\]
Because \(\sum_{n=0}^{\infty}\lambda_n=\infty\), Lemma~\ref{lem:quasi_fejer} gives
\[
\liminf_{n\to\infty} b_n=0.
\]
Since \(b_n=c_{\mathrm{mon}}\mathcal E_\eps(x_{n+1})\) and \(c_{\mathrm{mon}}>0\), this
is equivalent to
\[
\liminf_{n\to\infty}\mathcal E_\eps(x_{n+1})=0.
\]
Hence there exists a subsequence \((n_j)\) such that
\[
\mathcal E_\eps(x_{n_j+1})\to0.
\]
Writing \(x_n=(m_n,u_n)\), and using the fact that \(\eps>0\) is fixed, the
definition \eqref{eq:energy_eps} implies
\[
\|m_{n_j+1}-m_\eps\|_{L^{\bbar}}\to0,
\qquad
\|u_{n_j+1}-u_\eps\|_{W^{1,\gbar}}\to0.
\]
Thus
\[
x_{n_j+1}\to z_\eps^*
\qquad
\text{strongly in }X.
\]

The map \(z\mapsto \mathcal D_\Phi(z_\eps^*,z)\) is continuous with respect to
strong convergence in \(X\). Indeed, \(\Phi:X\to\RR\) is continuous, and the
duality map \(D\Phi:X\to X^*\) is norm-continuous because the
operators
\[
m\mapsto |m|^{\bbar-2}m,
\qquad
\xi\mapsto |\xi|^{\gbar-2}\xi,
\qquad
u\mapsto |u|^{\gbar-2}u
\]
are norm-continuous from \(L^{\bbar}\) to \(L^{\bbar'}\), from \(L^{\gbar}\) to
\(L^{\gbar'}\), and from \(L^{\gbar}\) to \(L^{\gbar'}\), respectively. Hence
the continuity of the duality pairing gives
\[
\mathcal D_\Phi(z_\eps^*,x_{n_j+1})\to0.
\]
Since the full sequence
\[
\mathcal D_\Phi(z_\eps^*,x_n)
\]
is convergent and has a subsequence converging to zero, its limit must be zero.
Therefore,
\[
\mathcal D_\Phi(z_\eps^*,x_n)\to0.
\]
The global lower Bregman bound \eqref{eq:bregman_lb} yields
\[
\|m_n-m_\eps\|_{L^{\bbar}}\to0,
\qquad
\|u_n-u_\eps\|_{W^{1,\gbar}}\to0.
\]
Thus
\[
x_n\to z_\eps^*
\qquad
\text{strongly in }X.
\]

Finally, the summability of \(a_n\) implies
\[
\|y_n-x_n\|_X\to0.
\]
Therefore
\[
\|y_n-z_\eps^*\|_X
\le
\|y_n-x_n\|_X+\|x_n-z_\eps^*\|_X
\to0,
\]
and so \(y_n\to z_\eps^*\) strongly in \(X\).
\end{proof}

\begin{remark}[Fixed-regularization nature of the convergence theorem]
Theorem~\ref{thm:main_convergence} should be read together with the
vanishing-regularization results in Theorems~\ref{thm:exist_limit}
and~\ref{thm:strong_limit}. For each fixed \(\eps>0\), the mirror iteration
converges strongly in \(X\) to the unique regularized solution \(z_\eps\). 
Along any vanishing sequence \(\eps_k\downarrow0\), subsequences of
\(z_{\eps_k}\) converge to solutions of the unregularized MFG, with the
additional strong density convergence described in
Theorem~\ref{thm:strong_limit}.
However, Theorem~\ref{thm:main_convergence}
does not provide a convergence statement uniform in \(\eps\). Indeed, the
constants in the proof, including \(L_{R,\eps}\), \(\Xi_{R,\eps}\), and
\(C_{R,\eps}\), may deteriorate as \(\eps\downarrow0\), and the error
functional \(\mathcal E_\eps\) controls the \(u\)-component only with the
weight \(\eps\).
\end{remark}

\begin{remark}[Sequential accuracy for the density]
The fixed-\(\eps\) convergence theorem can be combined with the
vanishing-regularization result to obtain a two-level approximation statement
for the density. Let \(x_n^\eps=(m_n^\eps,u_n^\eps)\) denote the iterates of
\eqref{eq:variable_step_algorithm} for fixed \(\eps>0\), and let
\(z_\eps=(m_\eps,u_\eps)\) be the corresponding regularized solution. 
If, along a sequence \(\eps_k\downarrow0\), the regularized densities satisfy
\(m_{\eps_k}\to m\) in \(L^{\bbar}(\T)\), then
\[
\|m_n^\eps-m\|_{L^{\bbar}}
\le
\|m_n^\eps-m_\eps\|_{L^{\bbar}}
+
\|m_\eps-m\|_{L^{\bbar}}.
\]
Consequently, given any \(\delta>0\), one may first choose \(k\) large enough so
that, with \(\eps=\eps_k\),
\[
\|m_\eps-m\|_{L^{\bbar}}\le \frac{\delta}{2},
\]
and then, for this fixed \(\eps\), choose \(n\) large enough so that
\[
\|m_n^\eps-m_\eps\|_{L^{\bbar}}\le \frac{\delta}{2}.
\]
Then \(\|m_n^\eps-m\|_{L^{\bbar}}\le\delta\). Equivalently, since
\[
\|m_n^\eps-m_\eps\|_{L^{\bbar}}^{\bbar}
\le
\mathcal E_\eps(x_n^\eps),
\]
it is enough to run the fixed-\(\eps\) iteration until
\[
\mathcal E_\eps(x_n^\eps)\le \left(\frac{\delta}{2}\right)^{\bbar}.
\]
Moreover, summing \eqref{eq:two_step_estimate} from \(n=0\) to \(N\) gives an
explicit best-iterate bound. Dropping the non-negative intermediate Bregman terms, telescoping yields
\[
c_{\mathrm{mon}}
\sum_{n=0}^N \lambda_n \mathcal E_\eps(x_{n+1}^\eps)
\le
\mathcal D_\Phi(z_\eps,x_0)
+
C_{R,\eps}\sum_{n=0}^N \lambda_n^{\kappa'}.
\]
Hence, bounding the sum from below by the minimum element gives
\[
\min_{0\le n\le N}\mathcal E_\eps(x_{n+1}^\eps)
\le
\frac{
\mathcal D_\Phi(z_\eps,x_0)
+
C_{R,\eps}\sum_{n=0}^N \lambda_n^{\kappa'}
}{
c_{\mathrm{mon}}\sum_{n=0}^N\lambda_n
}.
\]
Thus, for fixed \(\eps\), the density error of the best iterate among
\(x_1^\eps,\ldots,x_{N+1}^\eps\) satisfies the non-asymptotic bound
\[
\min_{0\le n\le N}
\|m_{n+1}^\eps-m_\eps\|_{L^{\bbar}}
\le
\left[
\frac{
\mathcal D_\Phi(z_\eps,x_0)
+
C_{R,\eps}\sum_{n=0}^N \lambda_n^{\kappa'}
}{
c_{\mathrm{mon}}\sum_{n=0}^N\lambda_n
}
\right]^{1/\bbar}.
\]
Because \(\sum\lambda_n=\infty\) and \(\sum\lambda_n^{\kappa'}<\infty\), the right-hand side vanishes as \(N\to\infty\). 
This is a sequential approximation statement: it does not give a complexity
bound uniform in \(\eps\), because the constants in the fixed-\(\eps\) estimate,
in particular \(C_{R,\eps}\), may deteriorate as \(\eps\downarrow0\).
\end{remark}

\section{Numerical experiments}\label{sec:numerics}

We test the two-step mirror descent method (Algorithm~\ref{alg:semenov}) on
stationary MFG problems in one and two space dimensions. All experiments use the
power-type Bregman
potential~\eqref{eq:phi} with $\bbar = \gbar = 3$
($\alpha = 2$, $\beta = 2$, $\kappa = 3$), with the quadratic Hamiltonian
$H_0(x,p) = \tfrac12|p|^2 + b(x)\cdot p$ and power-type coupling
$g(m) = m^{\bbar-1}$.

\subsection{Discretization}\label{ssec:discretization}

We discretize $\Tt^d$ with $N^d$ uniform grid points, mesh size $h = 1/N$, and
periodic indexing. The density $m$ is represented by grid values $m_j$ and the
value function $u$ by grid values $u_j$. For each coordinate direction $i$ we use
the one-sided differences
\begin{equation}
  D_i^- u_j = \frac{u_j - u_{j-e_i}}{h},
  \qquad
  D_i^+ u_j = \frac{u_{j+e_i} - u_j}{h},
\end{equation}
with the three-point stencil $\{0,+1,-1\}$ in one dimension and the five-point
stencil $\{(0,0),(\pm1,0),(0,\pm1)\}$ in two dimensions.

The Hamiltonian $H_0(x,Du) = \tfrac12|Du|^2 + b(x)\cdot Du$ is discretized by a
monotone scheme. The quadratic part is approximated by the Godunov numerical
Hamiltonian
\begin{equation}\label{eq:godunov}
  \tfrac12|Du|^2\Big|_j
  \;\approx\;
  \frac12 \sum_{i=1}^{d}
  \Big( \max(D_i^- u_j,\,0)^2 + \min(D_i^+ u_j,\,0)^2 \Big),
\end{equation}
and the drift $b(x)\cdot Du$ by upwinding in each direction according to the sign
of $b_i$, i.e.\ $b_i(x_j)\,D_i^- u_j$ where $b_i(x_j) \ge 0$ and
$b_i(x_j)\,D_i^+ u_j$ where $b_i(x_j) < 0$. The Godunov form~\eqref{eq:godunov}
is monotone (in the sense of \cite{BS91}) and consistent, and is standard for finite-difference approximations
of stationary MFG systems.

We discretize the mirror potential separately, taking
\begin{equation}
\Phi_N(m,u)
=
\frac{1}{\bbar}\sum_j \frac{m_j^{\bbar}}{N^d}
+
\frac{1}{\gbar}\sum_j
\frac{|D_h u_j|^{\gbar} + |u_j|^{\gbar}}{N^d},
\end{equation}
where, matching the smooth coercive geometry of $W^{1,\gbar}(\Tt^d)$, the gradient
power is the symmetric half-difference
\begin{equation}\label{eq:symmhalf}
  |D_h u|^{\gbar}_j
  =
  \frac12 \sum_{i=1}^d
  \Big( |D_i^+ u_j|^{\gbar} + |D_i^- u_j|^{\gbar} \Big).
\end{equation}
Thus the Hamiltonian uses the monotone Godunov/upwind discretization, while the
Bregman energy uses the symmetric average~\eqref{eq:symmhalf}. We write
$\nabla\Phi$ for the Euclidean gradient of $\Phi_N$ and $\Dd_\Phi$ for its
Bregman divergence, which enter the mirror step and the convergence increment
below.

The transport term $-\diver\!\big(m D_pH_0(x,Du)\big)$ in $\mathcal A_\eps$ is
assembled as the discrete adjoint of the linearized Hamiltonian: writing
$L_h(u)$ for the Jacobian of the discrete Hamiltonian with respect to $u$, the
discrete transport operator is $L_h(u)^{\!\top} m$. This is the
discretization introduced in \cite{achdouMeanFieldGames2010} that gives an adjoint-consistent
discretization of the transport equation.

Aside from the bounded truncation defining \(K\), which is inactive in all runs
below, the discrete admissible set enforces only the nonnegativity constraint
\begin{equation}
  K_N = \{(m,u) : m_j \ge 0 \text{ for all } j\}.
\end{equation}
In the
$u$-block, where the mirror step is solved by Newton's method, 
we smooth $|s|^{\gbar}$ as $(s^2+\delta^2)^{\gbar/2}$
with $\delta=10^{-8}$.

Given a target dual element $\tau A_n$, set
\[
\eta_n:=\nabla\Phi(x_n)-\tau A_n .
\]
For $\bbar = 3$ the $m$-block has the explicit, nonnegativity-preserving
Bregman projection onto $\{m\ge 0\}$,
\begin{equation}
m_j = \sqrt{N^d \max(0,\,\eta^m_{n,j})},
\qquad
\eta_n^m = \nabla\Phi_m(m_n) - \tau A_n^{m}.
\end{equation}
This is the exact constrained projection for the kernel \(\Phi_m\). Because the discrete Euclidean gradient \(\nabla\Phi_m(m)\) scales as \(m_j^{\bbar-1}/N^d = m_j^2/N^d\), inversion naturally yields the \(\sqrt{N^d}\) factor.
The \(u\)-block is unconstrained and is solved by Newton's method on
$\nabla\Phi_u(u) = \eta_n^u$ using the sparse Hessian
$\nabla^2\Phi_u$.
Since the bounded truncation is inactive in these runs,
the implemented iteration is the finite-dimensional mirror step associated with
the nonnegativity constraint. The zero-order terms $-u$ and $\eps\,|u|^{\gbar-2}u$
fix the additive level of $u$, so neither unit mass nor zero mean is imposed.

\subsection{Experimental setup}\label{ssec:setup}

We use $\bbar = \gbar = 3$, $\alpha = 2$, $\beta = 2$, $\eps = 0.01$,
initial data $m_j^0 = 1$ and $u_j^0 = 0$, and a maximum of $10^5$ iterations.
The convergence criterion is $\mathcal R_N(z_n) \le 10^{-8}$ for the one-dimensional
runs of Section~\ref{ssec:1d_results} and $\mathcal R_N(z_n) \le
10^{-6}$ for the two-dimensional runs, the exact-test-solution runs, and the
one-step comparisons. The step-size sequence $\lambda_n = (n+1)^{-3/4}$ satisfies the
conditions of Theorem~\ref{thm:main_convergence}, since
$\sum_{n}\lambda_n = \infty$ while
$\sum_{n}\lambda_n^{\kappa'} = \sum_{n}(n+1)^{-9/8} < \infty$ with
$\kappa' = \tfrac32$.

We measure the nodal residual of the discrete variational inequality
\(\VI(\mathcal A_{\eps,h},K_N)\). In all runs the density remains strictly
positive \((\min_j m_j>0)\), so the nonnegativity constraint is inactive at the
solution and the discrete variational inequality reduces to the discrete
operator equation \(\mathcal A_{\eps,h}(z)=0\). Let \(F_{1,h}(z)\) and
\(F_{2,h}(z)\) denote the discrete Hamilton--Jacobi and transport components
assembled with the finite-difference operators described above. We define
\[
\|w\|_{\ell_h^p}:=
\left(h^d\sum_j |w_j|^p\right)^{1/p},
\]
and set
\begin{equation}
  \mathcal R_N(z)
  =
  \|F_{1,h}(z)\|_{\ell_h^{\bbar'}}
  +
  \|F_{2,h}(z)\|_{\ell_h^{\gbar'}},
  \qquad
  \bbar'=\gbar'=\tfrac32 .
\end{equation}
Here, schematically,
\begin{equation}
\begin{aligned}
  F_{1,h}(z)
  &=
  -u - H_{0,h}[u] + g(m) + V_h,\\
  F_{2,h}(z)
  &=
  L_h(u)^{\!\top}m + m - \mathbf 1
  + \eps\,J_{\gbar,h}(u),
\end{aligned}
\end{equation}
where \(H_{0,h}\) is the Godunov/upwind Hamiltonian, \(L_h(u)^{\!\top}m\)
is the adjoint transport discretization, and
\(J_{\gbar,h}(u)=\nabla\Phi_u(u)\) is the discrete \(\gbar\)-Laplacian
regularization term associated with the \(u\)-block of \(\Phi_N\). In the exact
test solution runs below, the constant source \(\mathbf 1\) in \(F_{2,h}\) is
replaced by the constructed grid source \(f_h^\star\). Thus
\(\mathcal R_N\) is the nodal discrete residual of the assembled finite-dimensional
system; no auxiliary discrete \(W^{-1,\gbar'}\)-dual solve is used.

\subsection{Exact test solution validation in 1d}\label{ssec:manufactured}

We use prescribed exact test solutions to verify the consistency and convergence
order of the discretization.
We retain the Hamiltonian $H_0(x,p)=\tfrac12|p|^2+b(x)\cdot p$, the coupling
$g(m)=m^{\bbar-1}=m^2$, and the regularization $\eps=0.01$, and prescribe the
smooth exact pair
\begin{equation}
  u^\star(x) = 1 + \eta\sin(2\pi x),
  \qquad
  m^\star(x) = 1 + \rho\cos(2\pi x),
\end{equation}
with $\eta=0.1$, $\rho=0.15$, and drift $b(x)=0.3\cos(2\pi x)$. The
Hamilton--Jacobi equation then fixes the potential, and the transport equation
fixes the source:
\begin{align}
  V^\star &= u^\star + H_0(x,Du^\star) - (m^\star)^2,\\
  f^\star &= m^\star - \diver\!\big(m^\star(Du^\star+b)\big)
            + \eps\,|u^\star|^{\gbar-2}u^\star
            - \eps\,\diver\!\big(|Du^\star|^{\gbar-2}Du^\star\big),
\end{align}
so that $(m^\star,u^\star)$ is an exact solution of the regularized stationary
system with data $(b,V^\star,f^\star)$; the baseline one- and two-dimensional
runs below instead use the constant source \(f\equiv1\).

The error is measured as the density error $\|m_h-m^\star\|_{L^{\bbar}}$ and, for
the value function, the $W^{1,\gbar}$ seminorm
$|u_h-u^\star|_{W^{1,\gbar}} = \|D_h(u_h-u^\star)\|_{L^{\gbar}}$, together with the
observed rate $\log_2\!\big(E(N)/E(2N)\big)$. The additive level of $u$, which
completes the $W^{1,\gbar}(\Tt)$ norm, is checked separately through the
recovered mean below. The iteration is run to $\mathcal R_N\le10^{-6}$, below the
observed discretization error.

\begin{table}[ht]
\centering
\caption{Errors and convergence rates for the exact test solution, $d = 1$}
\label{tab:manufactured_1d}
\begin{tabular}{rcccc}
\hline
$N$ & $\|m_h-m^\star\|_{L^{3}}$ & rate
    & $|u_h-u^\star|_{W^{1,3}}$ & rate \\
\hline
64   & $6.52\times10^{-3}$ & ---  & $9.43\times10^{-3}$ & ---  \\
128  & $3.25\times10^{-3}$ & 1.00 & $4.73\times10^{-3}$ & 1.00 \\
256  & $1.63\times10^{-3}$ & 1.00 & $2.37\times10^{-3}$ & 1.00 \\
512  & $8.12\times10^{-4}$ & 1.00 & $1.18\times10^{-3}$ & 1.00 \\
\hline
\end{tabular}
\end{table}

Both errors decay at first order in $h$ (Table~\ref{tab:manufactured_1d}),
consistent with the monotone Godunov discretization of the Hamilton--Jacobi
equation and the adjoint transport discretization. Moreover, the discrete mass and mean reproduce the exact values to the displayed
precision, $h\sum_j m_{h,j} = 1.0000$ and $\mathrm{Mean}(u_h) = 1.0000$
(recovering $\int_\Tt m^\star = 1$ and $\overline{u^\star} = 1$).

Figure~\ref{fig:mms_overlay} overlays the numerical solution on the exact one at
$N = 64$, and Figure~\ref{fig:mms_conv} shows the errors against $h$ on a
log-log scale.

\begin{figure}[ht]
  \centering
  \begin{minipage}{0.49\textwidth}\centering
    \includegraphics[width=\linewidth]{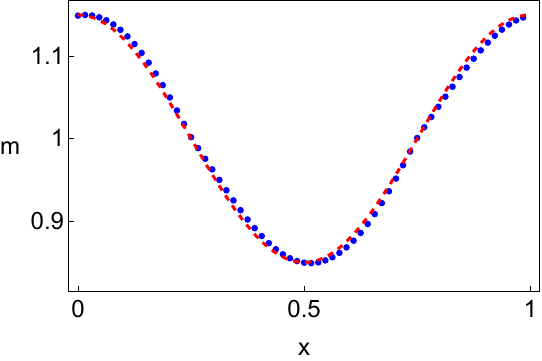}\\
    {\small (a) density $m$}
  \end{minipage}\hfill
  \begin{minipage}{0.49\textwidth}\centering
    \includegraphics[width=\linewidth]{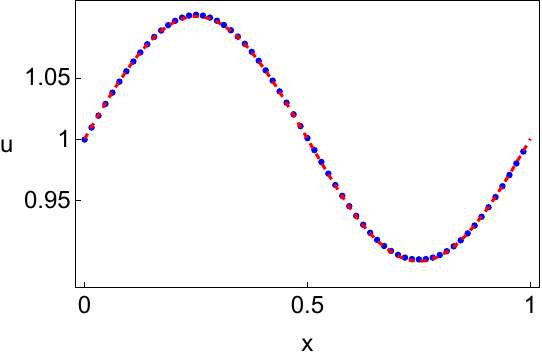}\\
    {\small (b) value function $u$}
  \end{minipage}
  \caption{Exact test solution ($d = 1$, $N = 64$). Numerical density
    $m_h$ and value function $u_h$ (points) overlaid on the exact
    $m^\star,u^\star$ (dashed); the curves agree at plotting resolution.}
  \label{fig:mms_overlay}
\end{figure}

\begin{figure}[ht]
  \centering
  \includegraphics[width=0.78\textwidth]{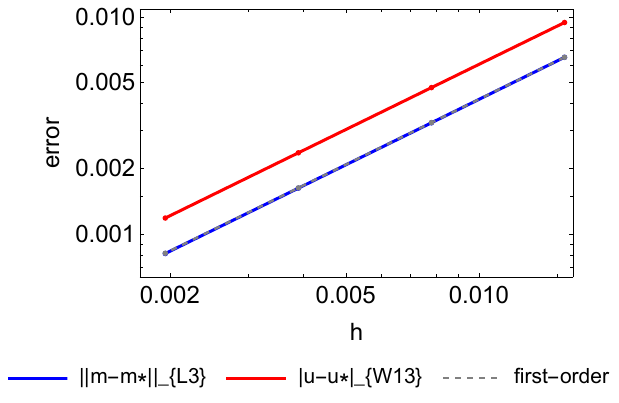}
  \caption{Convergence for the exact test solution ($d = 1$). Errors
    $\|m_h-m^\star\|_{L^3}$ and $|u_h-u^\star|_{W^{1,3}}$ versus $h$ on a
    log-log scale, with a first-order reference line.}
  \label{fig:mms_conv}
\end{figure}

\subsection{One-dimensional results}\label{ssec:1d_results}

We solve the stationary MFG on $\Tt = \Rr/\Zz$ with drift
$b(x) = \cos(2\pi x)$, potential $V(x) = \sin(2\pi x)$, and grid sizes
$N = 64, 128, 256, 512, 1024$. Table~\ref{tab:convergence} gives the
iteration count for each grid size; all runs reach the prescribed tolerance
$10^{-8}$. We use iteration counts rather than wall-clock times, which are
hardware- and implementation-dependent.

\begin{table}[ht]
\centering
\caption{Two-step mirror descent on $\Tt$: per-grid iteration count and residual
  decomposition across grid sizes. Every run reaches the prescribed tolerance,
  with final residual $\mathcal R_N = \|F_1\|_{L^{3/2}} + \|F_2\|_{L^{3/2}} \le
  10^{-8}$.}
\label{tab:convergence}
\begin{tabular}{rrrccc}
\hline
$N$ & $h$ & Iterations & $\|F_{1,h}\|_{L^{3/2}}$ & $\|F_{2,h}\|_{L^{3/2}}$
    & $\sum_n \Dd_\Phi(z_n,z_{n-1})$ \\
\hline
64   & $1.56 \times 10^{-2}$ & 5{,}311 & $1.8 \times 10^{-9}$ & $8.2 \times 10^{-9}$ & $1.629$ \\
128  & $7.81 \times 10^{-3}$ & 5{,}002 & $1.7 \times 10^{-9}$ & $8.3 \times 10^{-9}$ & $1.629$ \\
256  & $3.91 \times 10^{-3}$ & 4{,}814 & $1.6 \times 10^{-9}$ & $8.4 \times 10^{-9}$ & $1.629$ \\
512  & $1.95 \times 10^{-3}$ & 4{,}694 & $1.6 \times 10^{-9}$ & $8.4 \times 10^{-9}$ & $1.630$ \\
1024 & $9.77 \times 10^{-4}$ & 4{,}639 & $1.4 \times 10^{-9}$ & $8.6 \times 10^{-9}$ & $1.630$ \\
\hline
\end{tabular}
\end{table}

The iteration count is essentially independent of the mesh, remaining near
$5{,}000$ across the tested grids.

The density remains strictly positive, with $\min_j m_j \approx 0.485 > 0$. The
discrete mass $h\sum_j m_j = 0.9811$ is close to but not exactly one, consistent
with the identity $\int_\Tt m = 1 - \eps\int_\Tt |u|^{\gbar-2}u$ obtained by
integrating the transport equation. The mean of $u$ is
$\mathrm{Mean}(u) = 1.372$, nonzero as the operator pins the additive level of
$u$ (Section~\ref{ssec:discretization}). The density range $[\min m, \max m]$
is stable under refinement, from $[0.488, 1.589]$ at $N = 64$ to
$[0.483, 1.586]$ at $N = 1024$.

The last three columns in Table~\ref{tab:convergence} decompose the converged residual and list the cumulative
Bregman increment. Since no mean is subtracted, the residual measures the full
operator. The cumulative increment $\sum_n \Dd_\Phi(z_n, z_{n-1})$
stays close to $1.63$ across all grids, consistent with the nearly
mesh-independent iteration count; Figure~\ref{fig:conv1d}(b) shows the decay of
the individual increments on a semilog plot.

Figure~\ref{fig:conv1d}(a) shows the residual $\mathcal R_N(z_n)$ versus
iteration number on a semilog scale for all five grid sizes. The convergence
curves are nearly mesh-independent over the tested range, in agreement with the
iteration counts in Table~\ref{tab:convergence}.

\begin{figure}[ht]
  \centering
  \begin{minipage}{0.49\textwidth}\centering
    \includegraphics[width=\linewidth]{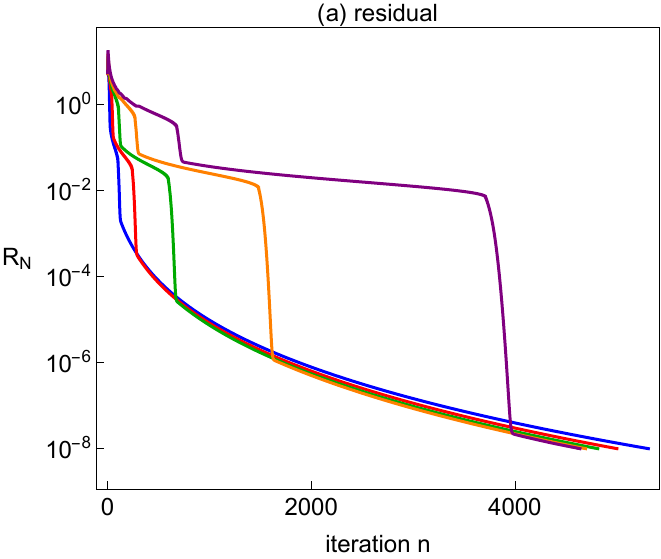}
  \end{minipage}\hfill
  \begin{minipage}{0.49\textwidth}\centering
    \includegraphics[width=\linewidth]{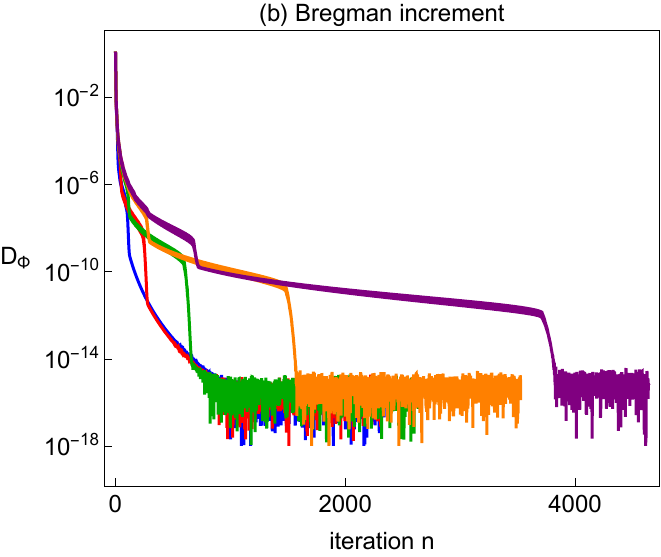}
  \end{minipage}\\[3pt]
  \includegraphics[width=0.66\textwidth]{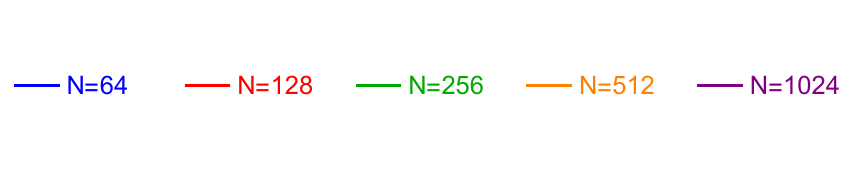}
  \caption{Convergence of the two-step mirror descent versus iteration for
    $d = 1$ across grid sizes $N = 64, 128, 256, 512, 1024$ (semilog):
    (a) residual $\mathcal R_N(z_n)$; (b) Bregman increment
    $\Dd_\Phi(z_n, z_{n-1})$.}
  \label{fig:conv1d}
\end{figure}

Figure~\ref{fig:solutions} shows the density $m(x)$ and value function $u(x)$ at
$N = 64$ and $128$, which overlap closely. The density is a smooth modulation of
the uniform state $m = 1$.

\begin{figure}[ht]
  \centering
  \begin{minipage}{0.49\textwidth}\centering
    \includegraphics[width=\linewidth]{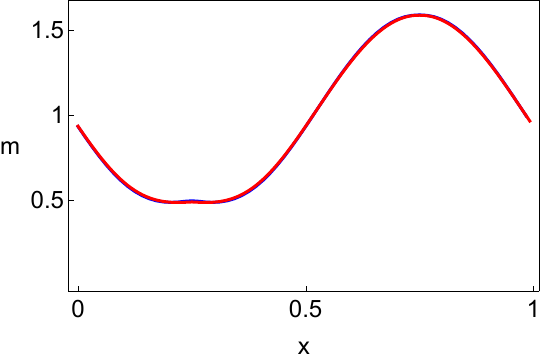}\\
    {\small (a) density $m(x)$}
  \end{minipage}\hfill
  \begin{minipage}{0.49\textwidth}\centering
    \includegraphics[width=\linewidth]{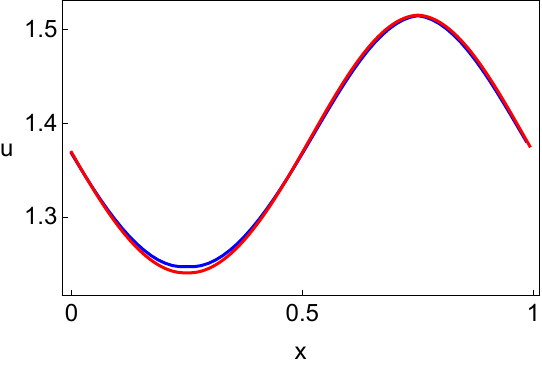}\\
    {\small (b) value function $u(x)$}
  \end{minipage}
  \caption{Density $m(x)$ and value function $u(x)$ for $d = 1$ on the $N = 64$
    and $N = 128$ grids. 
    The two profiles agree at plotting resolution.}
  \label{fig:solutions}
\end{figure}

\subsection{Comparison with one-step mirror descent}\label{ssec:one_vs_two}

To assess the effect of the second mirror step in Algorithm~\ref{alg:semenov},
we compare the full two-step update, which performs two mirror steps over \(K_N\)
per outer iteration from a single operator evaluation \(A_n = \mathcal A_\eps(x_n)\),
\begin{equation}
  y_n = \mathrm{Mirr}_{x_n}(\lambda_n A_n),
  \qquad
  x_{n+1} = \mathrm{Mirr}_{y_n}(\lambda_n A_n),
\end{equation}
with the standard one-step mirror descent method,
\begin{equation}
  x_{n+1} = \mathrm{Mirr}_{x_n}(\lambda_n \mathcal{A}_\eps(x_n)).
\end{equation}
The methods use the same kernel, operator, step sizes, and initial data.
Each uses one operator evaluation per outer iteration; the two-step method uses
one additional mirror solve.

Within the budget of \(10^5\) outer iterations, the one-step method reaches a
residual of order $4\times10^{-8}$ but does not reach the tolerance $10^{-8}$
used in Section~\ref{ssec:1d_results}, which the two-step method attains in
about $5{,}000$ iterations. We therefore compare the two methods at the looser
tolerance $\mathcal R_N \le 10^{-6}$, which both attain (Table~\ref{tab:one_vs_two}):
the two-step method requires about $1{,}800$ iterations, the one-step method
about $19{,}000$. 
In these tests, only the two-step implementation reaches the stricter tolerance within the prescribed budget.

\begin{table}[ht]
\centering
\caption{One-step versus two-step mirror descent for $d = 1$: iterations to
  reach $\mathcal R_N \le 10^{-6}$ (both methods use one operator evaluation per
  outer iteration)}
\label{tab:one_vs_two}
\begin{tabular}{clrr}
\hline
$N$ & Method & Iterations & Iteration ratio \\
\hline
64  & One-step  & 19{,}887 & ---            \\
64  & Two-step  & 1{,}882  & $\times\,10.6$ \\
\hline
128 & One-step  & 18{,}576 & ---            \\
128 & Two-step  & 1{,}785  & $\times\,10.4$ \\
\hline
256 & One-step  & 17{,}886 & ---            \\
256 & Two-step  & 1{,}723  & $\times\,10.4$ \\
\hline
\end{tabular}
\end{table}

\subsection{Two-dimensional experiment}\label{ssec:2d}

We extend the test problem to $\Tt^2 = \Rr^2/\Zz^2$ with the separable data
\begin{equation}
  b(x_1,x_2) = (\cos 2\pi x_1,\, \cos 2\pi x_2),
  \qquad
  V(x_1,x_2) = \sin 2\pi x_1 + \sin 2\pi x_2,
\end{equation}
the same coupling $g(m) = m^{\bbar-1} = m^2$, and regularization
$\eps = 0.01$. The same Hamiltonian, Bregman kernel, and mirror step are used,
with the five-point stencil described above. The tolerance is relaxed to
$10^{-6}$ on the two-dimensional grids.

\begin{table}[ht]
\centering
\caption{Two-step mirror descent on $\Tt^2$ with convergence across grid sizes.
  Every run reaches the prescribed tolerance, with final residual
  $\mathcal R_N \le 10^{-6}$.}
\label{tab:2d_convergence}
\begin{tabular}{crr}
\hline
Grid & $h$ & Iterations \\
\hline
$16\times 16$  & $6.25 \times 10^{-2}$ & 19{,}394 \\
$32\times 32$  & $3.13 \times 10^{-2}$ & 18{,}211 \\
$64\times 64$  & $1.56 \times 10^{-2}$ & 19{,}934 \\
\hline
\end{tabular}
\end{table}

The two-dimensional runs require about $18$--$20$ thousand iterations
(Table~\ref{tab:2d_convergence}), substantially more than in one dimension. The density remains positive but
develops a sharper low-density region under refinement, with $\min_j m_j$
decreasing from $\approx 0.029$ on the $16\times16$ grid to $\approx 0.004$ on
the $64\times64$ grid. On the finest grid, $h^2\sum_j m_j \approx 0.971$,
$\mathrm{Mean}(u) \approx 1.69$, and the density ranges over $[0.004, 2.00]$.

\paragraph{Exact test solution validation.}
As in one dimension (Section~\ref{ssec:manufactured}), 
we validate the
two-dimensional discretization against a prescribed exact solution.
We take
\begin{equation}
  u^\star = 1 + \eta(\sin 2\pi x_1 + \sin 2\pi x_2),
  \qquad
  m^\star = 1 + \rho(\cos 2\pi x_1 + \cos 2\pi x_2),
\end{equation}
with $\eta=\rho=0.1$ and drift $b = 0.3(\cos 2\pi x_1, \cos 2\pi x_2)$. The
potential $V^\star$ and transport source $f^\star$ are constructed exactly as in
one dimension (Section~\ref{ssec:manufactured}), so that $(m^\star,u^\star)$
solves the regularized system with data $(b,V^\star,f^\star)$. As in one
dimension, Table~\ref{tab:manufactured_2d} gives the density error in
$L^{\bbar}(\Tt^2)$ and the value-function error in the $W^{1,\gbar}(\Tt^2)$
seminorm, with the observed rates;
the discrete mass and mean reproduce $\int_{\Tt^2} m^\star = 1$ and
$\overline{u^\star} = 1$ to four digits.

\begin{table}[ht]
\centering
\caption{Errors and convergence rates for the exact test solution, $d = 2$}
\label{tab:manufactured_2d}
\begin{tabular}{ccccc}
\hline
Grid & $\|m_h-m^\star\|_{L^{3}}$ & rate
     & $|u_h-u^\star|_{W^{1,3}}$ & rate \\
\hline
$16\times16$ & $3.69\times10^{-2}$ & ---  & $5.25\times10^{-2}$ & ---  \\
$32\times32$ & $1.81\times10^{-2}$ & 1.03 & $2.63\times10^{-2}$ & 1.00 \\
$64\times64$ & $9.00\times10^{-3}$ & 1.01 & $1.32\times10^{-2}$ & 1.00 \\
\hline
\end{tabular}
\end{table}

Both errors decay at first order in $h$, confirming the consistency of the
two-dimensional Godunov Hamiltonian and adjoint transport discretization.
Figure~\ref{fig:mms_overlay_2d} overlays the computed solution on the exact one;
the numerical values lie on the exact surfaces.

\begin{figure}[ht]
  \centering
  \begin{minipage}{0.49\textwidth}\centering
    \includegraphics[width=\linewidth]{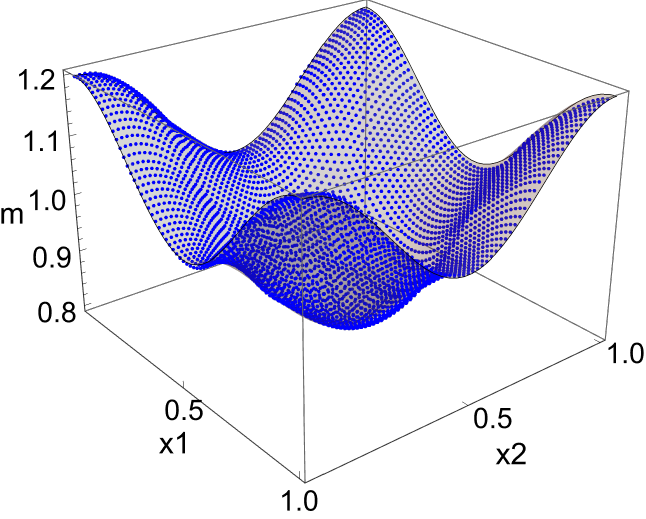}\\
    {\small (a) density $m$}
  \end{minipage}\hfill
  \begin{minipage}{0.49\textwidth}\centering
    \includegraphics[width=\linewidth]{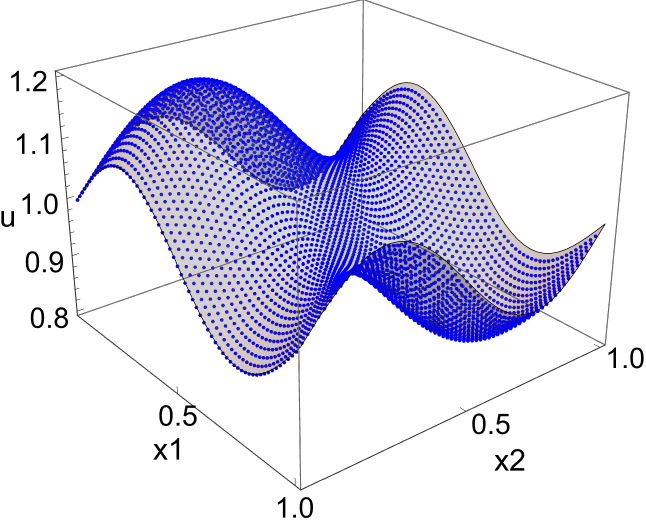}\\
    {\small (b) value function $u$}
  \end{minipage}
  \caption{Exact test solution ($d = 2$, $64\times64$ grid). Numerical
    density $m_h$ and value function $u_h$ (points) overlaid on the exact
    surfaces $m^\star, u^\star$ (shaded); the points lie on the surfaces.}
  \label{fig:mms_overlay_2d}
\end{figure}

Figure~\ref{fig:conv2d}(a) shows the residual decay on a semilog scale for the
tested grid sizes, and Figure~\ref{fig:conv2d}(b) displays the Bregman increment
$\Dd_\Phi(z_n,z_{n-1})$.

\begin{figure}[ht]
  \centering
  \begin{minipage}{0.49\textwidth}\centering
    \includegraphics[width=\linewidth]{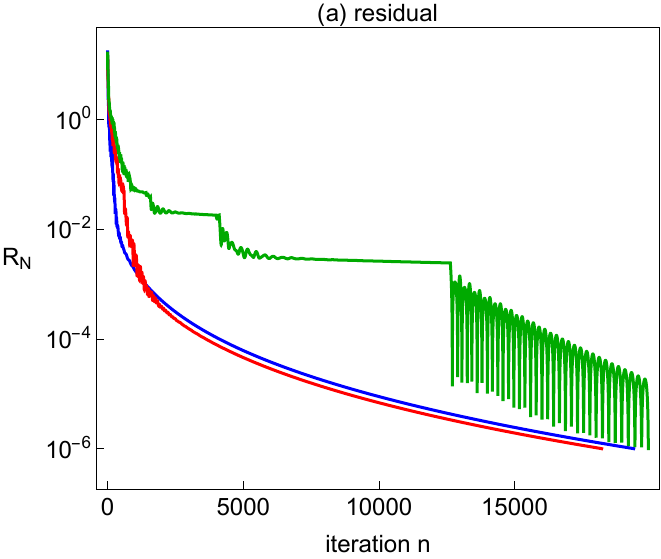}
  \end{minipage}\hfill
  \begin{minipage}{0.49\textwidth}\centering
    \includegraphics[width=\linewidth]{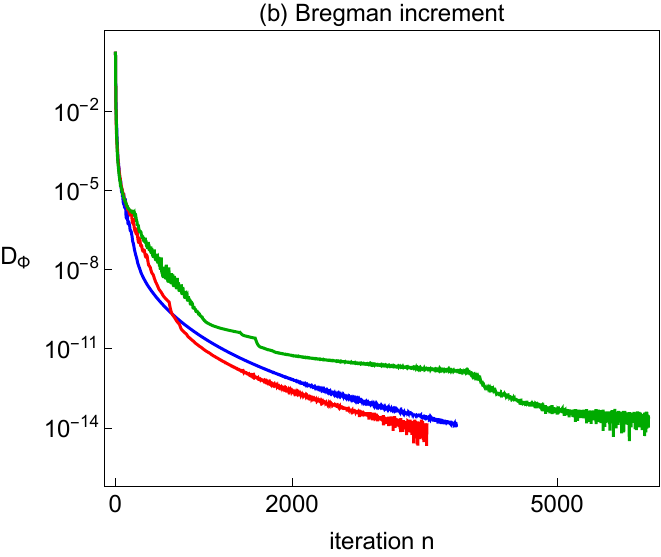}
  \end{minipage}\\[3pt]
  \includegraphics[width=0.27\textwidth]{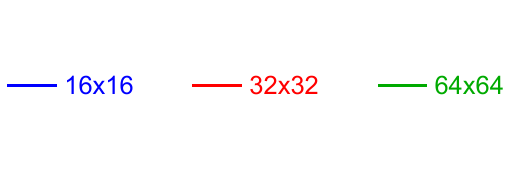}
  \caption{Convergence of the two-step mirror descent versus iteration for the
    two-dimensional experiment on the $16\times16$, $32\times32$, and
    $64\times64$ grids (semilog): (a) residual $\mathcal R_N(z_n)$; (b) Bregman
    increment $\Dd_\Phi(z_n, z_{n-1})$.}
  \label{fig:conv2d}
\end{figure}

Figure~\ref{fig:2d_density} displays the computed equilibrium density and value
function on the $64\times 64$ grid as three-dimensional surface plots.

\begin{figure}[ht]
  \centering
  \begin{minipage}{0.49\textwidth}\centering
    \includegraphics[width=\linewidth]{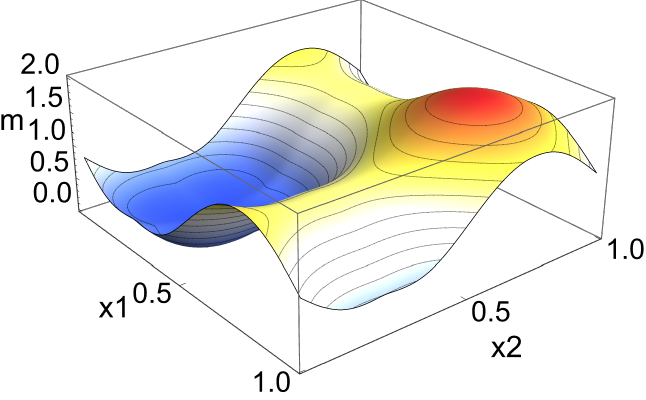}\\
    {\small (a) density $m(x_1,x_2)$}
  \end{minipage}\hfill
  \begin{minipage}{0.49\textwidth}\centering
    \includegraphics[width=\linewidth]{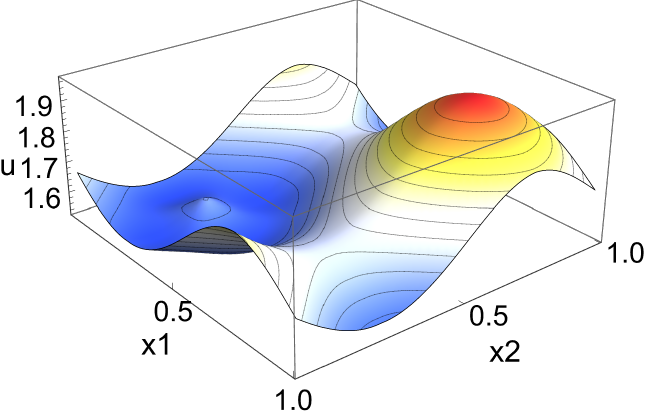}\\
    {\small (b) value function $u(x_1,x_2)$}
  \end{minipage}
  \caption{Two-dimensional equilibrium on the $64\times 64$ grid. Left panel
    shows the density $m(x_1,x_2)$, ranging in $[0.004, 2.00]$. Right panel
    shows the value function $u(x_1,x_2)$.}
  \label{fig:2d_density}
\end{figure}

\section{Conclusion and future directions}\label{sec:conclusion}

We developed a Bregman-projected two-step mirror method for a regularized
stationary MFG variational inequality in the natural Banach space
\(L^{\bbar}(\T)\times W^{1,\gbar}(\T)\). The analysis relies on three
ingredients: the low-order \(\gbar\)-Laplacian regularization, the quantitative
monotonicity estimate for \(\mathcal A_\eps\), and the local Lipschitz estimate
on the bounded admissible set \(K\). 
Together, these yield strong convergence
of the iterates for each fixed \(\eps>0\). The numerical experiments, including
validation against exact test solutions, are consistent with the theory and
show mesh-robust residual decay over the tested grid ranges. They also 
suggest that the two-step frozen-evaluation implementation can substantially 
reduce outer iteration counts relative to the tested one-step baseline.

The scope of the convergence theory is narrower than the full Banach-space
existence theory of \cite{ferreiraSolvingMeanFieldGames2025}. That theory
covers, among other cases, power-growth Hamiltonians with \(\alpha>1\),
\(\beta>0\), nonseparable Hamiltonians, congestion Hamiltonians, and certain
weak-growth Hamiltonians. The present algorithmic analysis is restricted to the
separable power-growth regime with \(\alpha\ge2\) and \(\beta\ge1\), 
because it
uses quantitative monotonicity and polynomial difference estimates that are not
part of the general existence framework.

Several extensions remain natural. For sub-quadratic Hamiltonians
\(1<\alpha<2\), the map \(D_pH_0\) is generally only H\"older continuous, so the
local Lipschitz argument would have to be replaced by a H\"older-type estimate.
For nonseparable Hamiltonians \(H(x,p,m)\), one would need quantitative
monotonicity beyond the qualitative Lasry--Lions condition, as well as suitable
regularity in the \(m\)-variable. For congestion-dependent or minimal-growth
Hamiltonians, the underlying function space or the domain of the operator must
be modified before the mirror framework can be applied.

Two further questions are especially relevant for computation. First, the
convergence theorem is a fixed-\(\eps\) result; the constants in the proof may
deteriorate as \(\eps\downarrow0\), so uniform convergence in the
vanishing-regularization limit remains open. 
Second, the numerical experiments suggest a substantial reduction in outer
iteration counts from the second mirror step, but the present proof does not
quantify this improvement. Establishing sharper estimates that explain this
gain is an important direction for future work.

\bibliographystyle{abbrv}
\bibliography{mfgv9_nn,mybibliography}

@article{achdouMeanFieldGames2010,
  title = {Mean Field Games: Numerical Methods},
  author = {Achdou, Y. and {Capuzzo-Dolcetta}, I.},
  year = 2010,
  journal = {Siam Journal On Numerical Analysis},
  volume = {48},
  number = {3},
  pages = {1136--1162},
  issn = {0036-1429},
  doi = {10.1137/090758477},
  fjournal = {SIAM Journal on Numerical Analysis},
  mrclass = {91A23 (65N06)},
  mrnumber = {2679575 (2011j:91042)},
  mrreviewer = {Diogo Lu\'\is Aguiar Gomes}
}

@article{almullaTwoNumericalApproaches2017,
  title = {Two {{Numerical Approaches}} to {{Stationary Mean-Field Games}}},
  author = {Almulla, Noha and Ferreira, Rita and Gomes, Diogo},
  year = 2017,
  month = dec,
  journal = {Dynamic Games and Applications},
  volume = {7},
  number = {4},
  pages = {657--682},
  issn = {2153-0793},
  doi = {10.1007/s13235-016-0203-5},
  urldate = {2025-01-28},
  langid = {english}
}

@article{BS91,
  title = {Convergence of Approximation Schemes for Fully Nonlinear Second Order Equations},
  author = {Barles, G. and Souganidis, P.E.},
  year = 1991,
  journal = {Asymptotic Analysis},
  volume = {4},
  number = {3},
  pages = {271--283}
}

@article{Caines2,
  title = {Large-{{Population Cost-Coupled LQG Problems With Nonuniform Agents}}: {{Individual-Mass Behavior}} and {{Decentralized}} \$\textbackslash varepsilon\$-{{Nash Equilibria}}},
  shorttitle = {Large-{{Population Cost-Coupled LQG Problems With Nonuniform Agents}}},
  author = {Huang, Minyi and Caines, Peter E. and Malhame, Roland P.},
  year = 2007,
  month = sep,
  journal = {IEEE Transactions on Automatic Control},
  volume = {52},
  number = {9},
  pages = {1560--1571},
  issn = {0018-9286},
  doi = {10.1109/TAC.2007.904450},
  urldate = {2025-10-17},
  copyright = {https://ieeexplore.ieee.org/Xplorehelp/downloads/license-information/IEEE.html}
}

@article{FeGoTa21,
  title = {Existence of Weak Solutions to Time-Dependent Mean-Field Games},
  author = {Ferreira, R. and Gomes, D. and Tada, T.},
  year = 2021,
  journal = {Nonlinear Analysis},
  volume = {212},
  pages = {Paper No. 112470, 31},
  issn = {0362-546X},
  doi = {10.1016/j.na.2021.112470},
  fjournal = {Nonlinear Analysis. Theory, Methods \& Applications. An International Multidisciplinary Journal},
  mrclass = {49N80 (35A01 35J56 91A16)},
  mrnumber = {4281340}
}

@misc{ferreiraSolvingMeanFieldGames2025,
  title = {Solving {{Mean-Field Games}} with {{Monotonicity Methods}} in {{Banach Spaces}}},
  author = {Ferreira, Rita and Gomes, Diogo and Ucer, Melih},
  year = 2025,
  month = jun,
  number = {arXiv:2506.21212},
  eprint = {2506.21212},
  primaryclass = {math},
  publisher = {arXiv},
  doi = {10.48550/arXiv.2506.21212},
  urldate = {2025-09-10},
  archiveprefix = {arXiv}
}

@article{FG2,
  title = {Existence of Weak Solutions to Stationary Mean-Field Games through Variational Inequalities},
  author = {Ferreira, R. and Gomes, D.},
  year = 2018,
  journal = {Siam Journal On Mathematical Analysis},
  volume = {50},
  number = {6},
  pages = {5969--6006},
  issn = {0036-1410},
  doi = {10.1137/16M1106705},
  fjournal = {SIAM Journal on Mathematical Analysis},
  mrclass = {91A13 (35D30 35J88 49J40)},
  mrnumber = {3882950}
}

@article{FGT1,
  title = {Existence of Weak Solutions to First-Order Stationary Mean-Field Games with {{Dirichlet}} Conditions},
  author = {Ferreira, R. and Gomes, D. and Tada, T.},
  year = 2019,
  journal = {Proceedings of the American Mathematical Society},
  volume = {147},
  number = {11},
  pages = {4713--4731},
  issn = {0002-9939,1088-6826},
  doi = {10.1090/proc/14475},
  fjournal = {Proceedings of the American Mathematical Society},
  furthernotes = {This paper makes important theoretical contributions to the study of mean-field games (MFGs) by addressing the challenge of incorporating Dirichlet boundary conditions in first-order stationary monotone MFGs. The authors develop a novel approach to construct solutions by first introducing a regularized problem and proving the existence of a unique weak solution using Schaefer's fixed-point theorem and the monotonicity properties of the MFG. They then employ Minty's method to establish the existence of weak solutions to the original MFG problem. The paper provides a comprehensive mathematical framework, including detailed assumptions, definitions, and proofs of the main theorems. It also discusses the significance of boundary conditions in MFGs and potential applications of the results. The study fills a gap in the literature, as most existing works on first-order MFGs focus on periodic boundary conditions. The paper's findings can serve as a foundation for further research on MFGs with various boundary conditions and their applications in modeling real-world phenomena.},
  mrclass = {35J56 (35A01)},
  mrnumber = {4011507},
  mrreviewer = {AshokAryal}
}

@article{huangLargePopulationStochastic2006,
  title = {Large Population Stochastic Dynamic Games: Closed-Loop {{McKean-Vlasov}} Systems and the {{Nash}} Certainty Equivalence Principle},
  author = {Huang, M. and Malham{\'e}, R. P. and Caines, P. E.},
  year = 2006,
  journal = {Communications in Information and Systems},
  volume = {6},
  number = {3},
  pages = {221--251},
  issn = {1526-7555},
  fjournal = {Communications in Information and Systems},
  mrclass = {91A15 (49L20 91A23)},
  mrnumber = {2346927 (2009f:91008)}
}

@article{lasryMeanFieldGames2007,
  title = {Mean Field Games},
  author = {Lasry, Jean-Michel and Lions, Pierre-Louis},
  year = 2007,
  month = mar,
  journal = {Japanese Journal of Mathematics},
  volume = {2},
  number = {1},
  pages = {229--260},
  issn = {0289-2316, 1861-3624},
  doi = {10.1007/s11537-007-0657-8},
  urldate = {2025-10-17},
  copyright = {http://www.springer.com/tdm},
  langid = {english}
}

@article{ll1,
  title = {Jeux \`a Champ Moyen. {{I}}. {{Le}} Cas Stationnaire},
  author = {Lasry, J.-M. and Lions, P.-L.},
  year = 2006,
  journal = {Comptes Rendus Mathematique. Academie des Sciences. Paris},
  volume = {343},
  number = {9},
  pages = {619--625},
  issn = {1631-073X},
  doi = {10.1016/j.crma.2006.09.019},
  fjournal = {Comptes Rendus Math\'ematique. Acad\'emie des Sciences. Paris},
  mrclass = {91A15 (35J55 35J60 91A06 91A10 91A13)},
  mrnumber = {MR2269875 (2007m:91021)},
  mrreviewer = {Sa\"\id Hamadene}
}

@article{ll2,
  title = {{Jeux \`a champ moyen. II -- Horizon fini et contr\^ole optimal}},
  author = {Lasry, Jean-Michel and Lions, Pierre-Louis},
  year = 2006,
  journal = {Comptes Rendus. Math\'ematique},
  volume = {343},
  number = {10},
  pages = {679--684},
  issn = {1778-3569},
  doi = {10.1016/j.crma.2006.09.018},
  urldate = {2024-08-23},
  langid = {french}
}

@misc{nurbekyanMonotoneInclusionMethods2024,
  title = {Monotone Inclusion Methods for a Class of Second-Order Non-Potential Mean-Field Games},
  author = {Nurbekyan, Levon and Liu, Siting and Chow, Yat Tin},
  year = 2024,
  month = mar,
  number = {arXiv:2403.20290},
  eprint = {2403.20290},
  primaryclass = {cs, math},
  publisher = {arXiv},
  doi = {10.48550/arXiv.2403.20290},
  urldate = {2024-08-29},
  archiveprefix = {arXiv}
}

@misc{nurbekyanNoteConvergenceMonotone2023,
  title = {A Note on the Convergence of the Monotone Inclusion Version of the Primal-Dual Hybrid Gradient Algorithm},
  author = {Nurbekyan, Levon},
  year = 2023,
  month = nov,
  number = {arXiv:2311.03689},
  eprint = {2311.03689},
  primaryclass = {cs, math},
  publisher = {arXiv},
  doi = {10.48550/arXiv.2311.03689},
  urldate = {2024-08-29},
  archiveprefix = {arXiv}
}

@article{W,
  title = {Action Minimizing Stochastic Invariant Measures for a Class of {{Lagrangian}} Systems},
  author = {Wang, K.},
  year = 2008,
  journal = {Communications on Pure and Applied Analysis},
  volume = {7},
  number = {5},
  pages = {1211--1223},
  issn = {1534-0392},
  doi = {10.3934/cpaa.2008.7.1211},
  fjournal = {Communications on Pure and Applied Analysis},
  mrclass = {37H99 (34F05 37J50 60H10 93E03)},
  mrnumber = {2410876 (2010a:37105)}
}

@misc{wuPopulationawareOnlineMirror2024,
  title = {Population-Aware {{Online Mirror Descent}} for {{Mean-Field Games}} by {{Deep Reinforcement Learning}}},
  author = {Wu, Zida and Lauriere, Mathieu and Chua, Samuel Jia Cong and Geist, Matthieu and Pietquin, Olivier and Mehta, Ankur},
  year = 2024,
  month = mar,
  number = {arXiv:2403.03552},
  eprint = {2403.03552},
  primaryclass = {cs, eess},
  publisher = {arXiv},
  doi = {10.48550/arXiv.2403.03552},
  urldate = {2024-08-28},
  archiveprefix = {arXiv}
}

@inproceedings{AKL22,
  author    = {Aubin-Frankowski, Pierre-Cyril and Korba, Anna and L{\'e}ger, Flavien},
  title     = {Mirror descent with relative smoothness in measure spaces, with application to {S}inkhorn and {EM}},
  booktitle = {Advances in Neural Information Processing Systems},
  year      = {2022},
  note      = {\href{https://arxiv.org/abs/2206.08873}{arXiv:2206.08873}}
}

@article{BADLS23,
  author  = {Brice{\~n}o-Arias, Luis M. and Deride, Julio and L{\'o}pez-Rivera, Sergio and Silva, Francisco J.},
  title   = {A primal-dual partial inverse algorithm for constrained monotone inclusions: {A}pplications to stochastic programming and mean field games},
  journal = {Applied Mathematics \& Optimization},
  volume  = {87},
  pages   = {21},
  year    = {2023},
  doi     = {10.1007/s00245-022-09921-9}
}

@article{BT03,
  author  = {Beck, Amir and Teboulle, Marc},
  title   = {Mirror descent and nonlinear projected subgradient methods for convex optimization},
  journal = {Operations Research Letters},
  volume  = {31},
  pages   = {167--175},
  year    = {2003},
  doi     = {10.1016/S0167-6377(02)00231-6}
}

@unpublished{GG26,
  author = {Gevorgyan, Yeva and Gomes, Diogo},
  title  = {A monotonicity-based numerical method for price formation in mean field games},
  year   = {2026},
  note   = {Preprint}
}

@article{HR22,
  author  = {Hieu, Dang Van and Reich, Simeon},
  title   = {Two {B}regman projection methods for solving variational inequalities},
  journal = {Optimization},
  volume  = {71},
  number  = {7},
  pages   = {1777--1802},
  year    = {2022},
  doi     = {10.1080/02331934.2020.1836634}
}

@article{IRS23,
  author  = {Izuchukwu, Chinedu and Reich, Simeon and Shehu, Yekini},
  title   = {One-step {B}regman projection methods for solving variational inequalities in reflexive {B}anach spaces},
  journal = {Optimization},
  volume  = {73},
  number  = {5},
  pages   = {1519--1549},
  year    = {2023},
  doi     = {10.1080/02331934.2022.2150706}
}

@unpublished{LaurMD23,
  author = {Lauri{\`e}re, Mathieu and Musik, Luca and Perrin, Alo{\"\i}s},
  title  = {A mirror descent approach for mean field control},
  year   = {2023},
  note   = {\href{https://hal.science/hal-03972660}{hal-03972660}}
}

@book{NY83,
  author    = {Nemirovsky, Arkadi S. and Yudin, David B.},
  title     = {Problem Complexity and Method Efficiency in Optimization},
  publisher = {John Wiley \& Sons},
  year      = {1983}
}

@inproceedings{PPELP21,
  author    = {P{\'e}rolat, Julien and Perrin, Sarah and Elie, Romuald and Lauri{\`e}re, Mathieu and Piliouras, Georgios and Geist, Matthieu},
  title     = {Scaling up mean field games with online mirror descent},
  booktitle = {Proceedings of the 21st International Conference on Autonomous Agents and Multiagent Systems (AAMAS)},
  year      = {2022},
  note      = {\href{https://arxiv.org/abs/2103.00623}{arXiv:2103.00623}}
}

@article{Sem17,
  author  = {Semenov, Vladimir V.},
  title   = {A version of the mirror descent method to solve variational inequalities},
  journal = {Cybernetics and Systems Analysis},
  volume  = {53},
  pages   = {234--243},
  year    = {2017},
  doi     = {10.1007/s10559-017-9923-9}
}

\end{document}